\newif\ifspringer
\newif\ifelsevier
\def\ps@pprintTitle{%
   \let\@oddhead\@empty
   \let\@evenhead\@empty
   \def\@oddfoot{\reset@font\hfil\thepage\hfil}
   \let\@evenfoot\@oddfoot
}
\tikzstyle{every picture}+=[remember picture]
\DeclareRobustCommand\onedot{\futurelet\@let@token\@onedot}
\newcommand{\@onedot}{\ifx\@let@token.\else.\null\fi\xspace}
\newcommand{\eg}{{e.g}\onedot} 
\newcommand{\ie}{{i.e}\onedot}
\newtheorem{prop}{Proposition}
\newtheorem{rem}{Remark}
\newtheorem{cor}{Corollary}
\theoremstyle{definition}
\newtheorem{defn}{Definition}
\newtheorem{exmp}{Example}
\newcommand{\vectbi}[1]{\boldsymbol{#1}} 
\newcommand{\vect}[1]{\vectbi{#1}}
\DeclareMathOperator{\supp}{supp}
\renewcommand{\leq}{\leqslant}
\renewcommand{\geq}{\geqslant}
\newcommand{\KKK}{\mathcal{K}}
\newcommand{\PPP}{\mathcal{P}}
\DeclareMathAlphabet{\mathcalb}{OMS}{cmsy}{b}{n} 
\DeclareMathAlphabet{\mathcal}{OMS}{cmsy}{m}{n} 
\newcommand{\bM}{\vect{M}}
\newcommand{\bDelta}{\vect{\Delta}}
\newlength{\casesvsep}
\newcommand{\figpath}{figure}
\newcommand*{\shifttext}[2]{%
\settowidth{\@tempdima}{#2}%
\makebox[\@tempdima]{\hspace*{#1}#2}%
}
\newcommand{\qtext}[1]{``#1''}
\journalname{\dots}
\providecommand{\doi}[1]{%
  \begingroup
    \let\bibinfo\@secondoftwo
    \urlstyle{rm}%
    \href{http://dx.doi.org/#1}{%
      doi:\discretionary{}{}{}%
      \nolinkurl{#1}%
    }%
  \endgroup
}
\begin{document}

\newcommand{\titletext}{
On multi-degree splines
}
\newcommand{\titlerunningtext}{
On multi-degree splines
}

\newcommand{\abstracttext}{
Multi-degree splines are piecewise polynomial functions having sections of different degrees.
For these splines, we discuss the construction of a B-spline basis by means of integral recurrence relations, extending the class of multi-degree splines that can be derived by existing approaches.
We then propose a new alternative method for constructing and evaluating the B-spline basis, based on the use of so-called transition functions.
Using the transition functions we develop general algorithms for knot-insertion, degree elevation and
conversion to Bézier form, essential tools for applications in geometric modeling.
We present numerical examples and briefly discuss how the same idea can be used in order to construct geometrically continuous multi-degree splines.
}

\ifspringer
\newcommand{\separ}{\and}
\fi
\ifelsevier
\newcommand{\separ}{\sep}
\fi

\newcommand{\keytext}{
Multi-degree spline \separ B-spline basis \separ Transition function \separ Geometric modeling \separ Knot-insertion \separ Degree elevation}

\newcommand{\MSCtext}{
65D07 \separ 65D17 \separ 41A15 \separ 68W25
}

\ifspringer
\title{\titletext
}
\titlerunning{\titlerunningtext} 

\author{
Carolina Vittoria Beccari \and
Giulio Casciola           \and
Serena Morigi
}
\authorrunning{C.V.~Beccari, G.~Casciola, S.~Morigi} 

\institute{
C.V.~Beccari (\Letter) \at
 Department of Mathematics, University of Bologna,\\
 Piazza di Porta San Donato 5, 40126 Bologna, Italy\\
 \email{carolina.beccari2@unibo.it} 
\and
G.~Casciola \at
 Department of Mathematics, University of Bologna,\\
 Piazza di Porta San Donato 5, 40126 Bologna, Italy\\
 \email{giulio.casciola@unibo.it} 
\and
S.~Morigi \at
 Department of Mathematics, University of Bologna,\\
 Piazza di Porta San Donato 5, 40126 Bologna, Italy\\
 \email{serena.morigi@unibo.it} 
}

\date{Received: date / Accepted: date}

\maketitle

\begin{abstract}
\abstracttext
\keywords{\keytext}
\subclass{\MSCtext}
\end{abstract}
\fi

\ifelsevier
\begin{frontmatter}

\title{\titletext}

\author[label2]{Carolina Vittoria Beccari}
\ead{carolina.beccari2@unibo.it}
\author[label2]{Giulio Casciola}
\ead{giulio.casciola@unibo.it}
\author[label2]{Serena Morigi}
\ead{serena.morigi@unibo.it}

\address[label2]{Department of Mathematics, University of Bologna,
Piazza di Porta San Donato 5, 40126 Bologna, Italy}

\begin{abstract}
\abstracttext
\end{abstract}

\begin{keyword}
\keytext
\MSC[2010]\MSCtext
\end{keyword}

\end{frontmatter}
\fi


\section{Introduction}
\label{sec:intro}

Multi degree splines (MD-splines, for short) are piecewise functions comprised of polynomial segments of different degrees.
They were proposed in the seminal paper \cite{SZS2003} and they have been a subject of study in several recent works \cite{WD2007,SW2010a,SW2010b,Li2012,SW2013,Shenetal2016}.
A more general setting was considered in \cite{BMuhl2003}, where the section spaces belong to the class of Extended Chebyshev spaces and are not constrained to be of the same dimension.

In addition to the knot intervals and control points, to model a shape MD-splines leverage on one more parameter, namely the degree.
The degree can be chosen locally to get the best shape fitting,
thus allowing to use less control points than those necessary with \emph{conventional} splines (the latter being intended as spline spaces where every piece is spanned by polynomials of the same degree).
This is illustrated by several examples in \cite{Shenetal2016,SZS2003}, where the same curve is designed using the Bézier representation, conventional splines and MD-splines.
At the same time, MD-splines reduce to conventional splines when all segments are of the same degree, thus generalizing the traditional approach.
Recently, the concept of working locally with polynomials of different degrees has also been introduced with a view to application in Isogeometric Analysis, both in the context of T-splines \cite{Liu2015} and polar splines \cite{TOSH2017}.

Previous proposals of MD-splines differ in the way the connections between polynomial segments are handled. Most of them do not include the possibility of using multiple knots, and as a consequence, they do not allow to control the degree of continuity as we do with conventional splines.
In particular the constructions proposed in \cite{Li2012} and \cite{SW2010b} yield splines that are exactly $C^{d-1}$-continuous at the join between two segments of same degree $d$. Between two segments of degrees $d_{i-1}$ and $d_{i}$, instead, continuity of order $C^{\min(d_{i-1},d_{i})}$ is attained by the method in \cite{SW2010b}, while the splines in \cite{Li2012} can be $C^1$-continuous only. However, both constructions do not allow for using multiple knots in order to reduce the continuity.
Only in  \cite{SW2010a} knots of multiplicity greater than one are admitted. As a result,
any order of continuity can be attained, but only up to $C^{\min(d_{i-1},d_{i})-1}$, for both $d_{i-1}\neq d_i$ and $d_{i-1}=d_i=d$.

The construction devised in this paper includes and extends all previously proposed instances of MD-splines,
allowing any continuity from $C^0$ to $C^{\min(d_{i-1},d_{i})}$ at the join.
The degree of continuity is handled by means of multiple knots, thus following the classical approach with conventional splines.
However, the maximum attainable continuity  $C^{\min(d_{i-1},d_{i})}$ is higher, and can be interpreted as having one knot of multiplicity equal to zero.
This peculiarity is indeed a natural consequence of working with spline spaces having sections of different degree (as well as different nature such as in the case of spaces spanned locally by trigonometric, polynomial or hyperbolic functions) or geometric continuity \cite{DynMicchelli1989}.

In \cite{SW2010b} and \cite{SW2010a} the B-spline basis is generated by means of integral recurrence relations.
This type of formul\ae \; provides an elegant construction for the B-spline basis and a convenient way to derive its properties, nevertheless the integral definition of the basis functions is not easily computable.
This is well understood also in the context of conventional splines, where
Cox-de Boor recurrence relation is the preferred tool for evaluating the B-spline basis.

In this paper, we first construct MD-splines by means of an integral recurrence relation which generalizes the approaches in \cite{SW2010a,SW2010b}.
We also provide a Cox-de Boor type evaluation algorithm for a specific class of MD-splines which are limited to $C^1$ smoothness.
However, to the best of our knowledge, a similar recurrence formula does not exist in the general MD-spline framework.
This will prompt us to introduce a new approach for computing the basis functions.
We will show that the elements of the B-spline basis can be expressed in terms of another basis, which is composed of so-called \emph{transition functions}, the latter being very easy to compute.
In fact, the transition functions are simply calculated as the solution of an Hermite interpolation problem, which always admits a unique solution in a MD-spline space \cite{BMuhl2003}.
Furthermore we will show how commonly performed operations, such as knot insertion, degree elevation and conversion to Bézier form,
can simply be accomplished relying on the transition functions.
In previous papers, transition functions were introduced in the context of conventional splines \cite{ABC2013a,BCR2013a} and piecewise Chebyshevian splines with sections all of the same dimension \cite{BCR2017}.
The hurdle posed by MD-splines is mostly in the need for handling spaces of different dimension, which entails that the support of the basis functions is nontrivially defined.
To overcome this difficulty, we exploit two auxiliary knot partitions, enabling us to clearly identify the points where the basis functions start and terminate and their continuity at these locations.
This idea is at the basis of both the proposed generalized integral recurrence relation and the derivation of the transition functions.

In closing we briefly discuss how to extend the proposed construction to the wider framework of geometrically continuous MD-splines.
In this setting the continuity conditions between adjacent spline pieces are expressed by means of proper connection matrices. The entries of these matrices provide additional degrees of freedom that can be exploited as shape parameters in CAGD applications \cite{DynMicchelli1989,BCM2016}.\\

The remainder of the paper is organized as follows. In Section \ref{sec:definition} we define the considered MD-spline spaces and introduce our setting and notation. The two auxiliary knot partitions, which will be essential throughout the paper, and the construction of the B-spline basis by means of an integral recurrence relation are discussed in Section \ref{sec:integral_rec}. In Section \ref{sec:computation} we address the problem of computing such a basis in an alternative way via transition functions.
In Section \ref{sec:modeling_tools},
the usual modeling tools, including knot insertion, degree elevation and
conversion to Bézier form, are then derived in terms of transition functions.
Finally Section \ref{sec:geometric} is devoted to discussing how the proposed approach can be extended to generate MD-splines that are geometrically continuous and Section \ref{sec:examples} presents simple examples of applications to geometric modeling.

\section{Multi-degree spline spaces}
\label{sec:definition}
A MD-spline is a function defined on an interval $[a,b]$ and composed of pieces of polynomial functions of different
degrees defined on subintervals and joined at their endpoints with a suitable degree of smoothness.

To construct MD-splines, we introduce the following setting.

\noindent Let $[a,b]$ be a bounded and closed interval, and $\bDelta\coloneqq\left\{x_i\right\}_{i=1}^{q}$ be a set of \emph{break-points}
such that $a \equiv x_{0} < x_{1} < \ldots <x_{q} < x_{q+1} \equiv b$.
The polynomial pieces are defined on the subintervals $\left[x_i,x_{i+1}\right]$, $i=0,\dots,q$.
Let $\mathbf{d}\coloneqq\left(d_0,\ldots,d_q\right)$ be a vector of positive integers, where $d_i$ is the degree of
the polynomial defined on the interval $\left[x_i,x_{i+1}\right]$.
Then two adjacent polynomials defined respectively on the subintervals $\left[x_{i-1},x_i\right]$ and $\left[x_i,x_{i+1}\right]$
join at the break-point $x_i$ with continuity $C^{k_i}$ where $k_i$ is a nonnegative integer such that
$$
0 \leq k_i \leq \left\{
\begin{array}{lll}
 min(d_{i-1},d_i), & if & d_{i-1} \neq d_i, \\
 d_i-1, & if & d_{i-1} = d_i. \\
\end{array} \right.
$$
The vector $\KKK\coloneqq\left(k_1,\dots,k_q\right)$ of nonnegative integers determines
the degree of smoothness.

\noindent The set $S(\PPP_{\mathbf{d}},\KKK,\bDelta)$ of multi degree splines is defined as follows.

\begin{defn}[Multi-degree splines]\label{def:QCS}
Given a partition $\bDelta = \left\{x_i\right\}_{i=1}^q$ on the bounded and closed interval $[a,b]$,
the associated sequence of polynomial degrees $\mathbf{d}$, and the corresponding sequence $\KKK$ of degrees of smoothness,
the set of \emph{multi-degree splines} is given by
\begin{align}
S(\PPP_{\mathbf{d}},\KKK,\bDelta) \coloneqq & \left\{ f \,\big|\, \mbox{there exist } p_i\in\PPP_{d_i}, i=0,\dots,q,  \mbox{ such that:} \right. \\
&
\begin{minipage}[b]{0.75\linewidth}
\begin{enumerate}[label=\roman*)]
 \item $f(x)=p_i(x)$ for $x\in [x_i,x_{i+1}], \, i=0,\dots,q$;
 \item $D^\ell p_{i-1}(x_i)=D^\ell p_{i}(x_i)$ for $\ell=0,\dots,k_i$, \, $i=1,\dots,q \left. \vphantom{\big|} \right\}.$
\end{enumerate}
\end{minipage}
\end{align}
\end{defn}

It follows from standard arguments that the set $S(\PPP_{\mathbf{d}},\KKK,\bDelta)$ of multi-degree splines in Definition
\ref{def:QCS} is a function space of dimension $d_0+1+K_s$ with $K_s\coloneqq \sum_{i=1}^q (d_i-k_i)$ or, equivalently,
$d_q+1+K_t$ with $K_t\coloneqq \sum_{i=1}^q (d_{i-1}-k_i)$.
From now on, for simplicity, we will denote the dimension of the spline space with $K$.
In case $d_i=d$, for each $i=0,\dots,q$, and a fixed positive integer $d$, then $S(\PPP_{\mathbf{d}},\KKK,\bDelta)$ reduces
to the conventional spline space.
A useful property of these spaces is that the zero count for conventional splines carries over to MD-splines. In particular, the number of zeroes of a spline $f$ in $[x_p,x_r]$, counting multiplicities, is bounded as follows \cite{BMuhl2003,Buchwald_thesis}:
\begin{equation}\label{eq:zeros}
Z\left(f,[x_p,x_r]\right)\leq \sum_{i=p}^{r-1} (d_i+1)-\sum_{i=p+1}^{r-1} (k_i+1) - 1=d_p+\sum_{i=p+1}^{r-1}(d_i-k_i)-1,
\end{equation}
namely it is smaller than the dimension of the spline space restricted to the considered interval.

\section{Multi-degree B-spline bases through integral recurrence relation}
\label{sec:integral_rec}

For conventional degree-$d$ splines, associated with a given extended knot vector,
each basis function has compact support defined by $d+1$ consecutive knot intervals.
For the construction of the B-spline basis for the multi-degree spline space $S(\PPP_{\mathbf{d}},\KKK,\bDelta)$,
it is convenient to consider two different extended knot vectors $\bDelta^*_s=\{s_j\}$ and  $\bDelta^*_t=\{t_j\}$
such that the $i$-th B-spline basis function $N_{i,m}$, with $m\coloneqq \max_i\{d_i\}$,
\qtext{starts} at $s_i \in \bDelta^*_s$ and \qtext{terminates} in $t_i \in \bDelta^*_t$.
Hence its support, $\supp N_{i,m}$, is the interval $[s_i,t_i]$ defined by a sequence of consecutive break-point intervals.

\begin{defn}[Extended partitions]\label{def:part_estesa}
The set of knots $\bDelta_s^*\coloneqq\left\{s_j\right\}_{j=1}^{K}$, with $K\coloneqq d_0+K_s+1$ and $K_s\coloneqq \sum_{i=1}^q (d_i-k_i)$, is called the \emph{left extended partition} associated with $S(\PPP_{\mathbf{d}},\KKK,\bDelta)$ if and only if:
\begin{enumerate}[label=\roman*)]
 \item $s_1 \leq s_2 \leq \dots \leq s_{K}$;
 \item $s_{d_0+1} \equiv a$;
 \item $\left\{s_{d_0+2}, \dots, s_{K}\right\} \equiv
        \{ \underbrace{x_1,\dots,x_1}_{d_1-k_1 \text{ times}}, \dots, \underbrace{x_q,\dots,x_q}_{d_q-k_q \text{ times}} \}$.
\end{enumerate}
Similarly, the set of knots $\bDelta_t^*\coloneqq\left\{t_j\right\}_{j=1}^{K}$, with $K\coloneqq d_q+K_t+1$ and $K_t\coloneqq \sum_{i=1}^q (d_{i-1}-k_i)$, is called
the \emph{right extended partition} associated with $S(\PPP_{\mathbf{d}},\KKK,\bDelta)$ if and only if:
\begin{enumerate}[label=\roman*)]
 \item $t_1 \leq t_2 \leq \dots \leq t_{K}$;
 \item $t_{K-d_q} \equiv b$;
 \item $\left\{t_{1}, \dots, t_{K-d_q-1}\right\} \equiv
        \{ \underbrace{x_1,\dots,x_1}_{d_0-k_1 \text{ times}}, \dots, \underbrace{x_q,\dots,x_q}_{d_{q-1}-k_q \text{ times}} \}$.
\end{enumerate}
\end{defn}

For simplicity and without loss of generality, we will limit our discussion to the case of \emph{clamped} partitions, \ie extended partitions with the two extreme break-points repeated $d_0 + 1$ times in $\bDelta_s^*$, and $d_q+1$ times in $\bDelta_t^*$,
that is $s_1= \ldots =s_{d_0+1}=x_0$ and $t_{K-d_q}= \ldots =t_{K}=x_{q+1}$.

The set of multi-degree B-spline functions $\{N_{i,m}(x)\}_{i=1}^{K}$ can be generated by the following integral recurrence relation.

\begin{defn}[Basis functions]\label{def:basisf}
Let $m\coloneqq \max_i\{d_i\}$.
The function sequence $\{N_{i,n}(x)\}$
is defined over $\bDelta^*_s$ and $\bDelta^*_t$ for any recurrence step $n=0,\ldots,m$ and $i=m+1-n,\ldots,K$.

Each $N_{i,n}$ has support $\supp N_{i,n}=[s_i,t_{i-m+n}]$,
and is defined on each break-point interval $[x_j,x_{j+1}) \subset [s_i,t_{i-m+n}]$ with $s_i<t_{i-m+n}$
as follows:
\begin{equation}\label{def:int_rec}
\displaystyle
N_{i,n}(x)\coloneqq \left \{
\begin{array}{ll}
1, \qquad x_j \leq x < x_{j+1} & n=m-d_j, \vspace{0.2cm}\\
\int_{-\infty}^x\left[\delta_{i,n-1}N_{i,n-1}(u)-\delta_{i+1,n-1}N_{i+1,n-1}(u)\right]du,  & n>m-d_j, \vspace{0.2cm}\\
0,  & otherwise,
\end{array}
\right.
\end{equation}
where
\[\delta_{i,n}\coloneqq\left( \int_{-\infty}^{+\infty} N_{i,n}(x) dx \right)^{-1}.\]

Undefined $N_{i,n}$ functions must be regarded as the zero function.
In addition, like in the de Boor formula for conventional B-spline basis functions ($0/0=0$), we
set $\delta_{i,n} N_{i,n} \coloneqq 0$ when $N_{i,n}(x)=0$. However, in order to satisfy the partition of unity, $\delta_{i,n} N_{i,n}$ should satisfy $\int_{-\infty}^{+\infty}\delta_{i,n}N_{i,n}(x)dx=1$. Therefore, when $N_{i,n}(x)=0$, we set
\begin{equation} \label{cond_N=0}
\int_{-\infty}^x\delta_{i,n}N_{i,n}(u)du\coloneqq \left\{
\begin{array}{ll}
0, & x<s_i,\\
1, & x \geq t_i.\\
\end{array}
\right.
\end{equation}
\end{defn}

From now on we indicate by $ps_{i}$ the index of the break-point associated with the knot $s_i$ and
$pt{_i}$ the index of the break-point associated with the knot $t_i$. With this notation we can state the properties of the above defined B-spline basis.

\begin{defn}[B-spline basis properties]\label{def:B-spline basis}
The B-spline functions $\{N_{i,m}\}_{i=1}^{K}$ of the MD-spline space $S(\PPP_{\mathbf{d}},\KKK,\bDelta)$ built by relation \eqref{def:int_rec}
satisfy the following properties:
\begin{enumerate}[label=\roman*)]
\setlength{\itemsep}{4pt}
\item \label{propty:BS1} \emph{Local Support}: $N_{i,m}(x)=0$ for $x \notin [s_i, t_i]$;
\item \label{propty:BS2} \emph{Positivity}:  $N_{i,m}(x) > 0$ for $x \in (s_i,t_i)$;
\item \label{propty:BS3} \emph{End Point}: $N_{i,m}$ vanishes exactly
\begin{itemize}
	\item $d_{ps_i}-\max \{ j\geq 0 \ | \ s_i=s_{i+j} \} \;$ times at $s_i$,
	\item $d_{pt_i-1}-\max \{ j \geq 0 \ | \ t_{i-j}=t_i \} \;$ times at $t_i$;
\end{itemize}
\item \label{propty:BS4} \emph{Normalization}: $\displaystyle \sum_i N_{i,m}(x) = 1$, $\forall x \in [a,b]$.
\end{enumerate}
\end{defn}

The above property \ref{propty:BS3} immediately follows from the integral recurrence relation \eqref{def:int_rec} and guarantees the linear independence of the constructed functions. This, together with the fact that the number of basis functions generated by \eqref{def:int_rec} equals the dimension of the spline space, yields that the set $\{N_{i,m}\}_{i=1}^K$ is a basis for the space $S(\PPP_{\mathbf{d}},\KKK,\bDelta)$.
Similarly, also properties \ref{propty:BS1}, \ref{propty:BS2} and \ref{propty:BS4} are verified by construction. A detailed proof has already been presented for the splines in \cite{SW2010a} and can be repeated in our case following the same outline.

Any MD-spline $f \in S(\PPP_{\mathbf{d}},\KKK,\bDelta)$ is represented as a linear combination of the B-spline basis functions
$N_{i,m}$, $i=1,\dots,K$, defined in \eqref{def:int_rec}, in the following form
\begin{equation}\label{eq:spline_global}
f(x)=\sum_{i=1}^{K} c_i\,N_{i,m}(x), \quad x\in[a,b],
\end{equation}
and also, locally, as
\begin{equation}\label{eq:spline_local}
f(x)=\sum_{i=\ell-d_j}^{\ell} c_i\,N_{i,m}(x), \quad x\in[x_j,x_{j+1}], \quad s_\ell \leq x_j < \min\left(s_{\ell+1},b\right).
\end{equation}

By construction,  two adjacent segments of a MD-spline with different degrees $d_{i-1}$ and $d_{i}$
join at the break-point $x_{i}$ with continuity
$C^k$, $0\leq k\leq \min\left(d_{i-1},d_{i}\right)$,
and the continuity between two adjacent segments of same degree $d_{i-1}=d_i=d$ is 
$C^k$, $0\leq k\leq d-1$.
This is a potential that goes beyond what is offered by the conventional spline setting and is made plausible through
the concept of break-points with \emph{zero multiplicity}, \ie having no knot $s$ or $t$ associated to them.

The proposed construction is characterized by the use of the two extended partitions $\bDelta_s$ and $\bDelta_t$. This is the main difference with respect to other integral recurrence relations previously proposed for the MD B-spline basis \cite{SW2010b,SW2010a} and allows us to achieve a wider range of continuities at the break-points.
More precisely, compared with the proposal of Changeable Degree splines in \cite{SW2010a}, which are limited up to $C^{\min(d_{i-1},d_{i})-1}$ continuity,
the MD-splines in this paper have higher order of continuity. Moreover, they allow for a control on the degree
of continuity at the break-points, a benefit which is not offered by the other proposals of MD-splines.
In particular, the MD-splines introduced in \cite{SW2010b} limit the continuity between segments
of different degrees $d_{i-1}$ and $d_{i}$ at the highest smoothness $C^{\min(d_{i-1},d_{i})}$, while in \cite{Li2012}
the MD-splines are strictly required to be $C^1$ between two adjacent curve segments with different degrees.

The following example illustrates the notations and the recursive formula for MD-splines.
\begin{figure}[tbh]
\centering
\begin{tikzpicture}[scale=0.8]
\def\d{2}
\def\dd{2}
\def\ks{1.8575}
\def\lxlim{-0.035}
\def\rxlim{9.785}
\def\ylim{7.5}

\def\mks{0.07}

\def\xa{0}
\def\xb{1.39}
\def\xc{4.185}
\def\xd{8.365}
\def\xe{9.785}

\node[anchor = south] at (0,0) {};

\draw[-] (\lxlim,0) -- (\rxlim,0) ;
\draw[yshift=-0.2\baselineskip]	(\xa,0) node[anchor=north] {$x_0$}
		                        (\xb,0) node[anchor=north] {$x_1$}
		                        (\xc,0) node[anchor=north] {$x_2$}
	                            (\xd,0) node[anchor=north] {$x_3$}
	                            (\xe,0) node[anchor=north] {$x_4$};


\filldraw (\xa,0) circle [radius=0.1,fill=black];
\filldraw (\xb,0) circle [radius=0.1,fill=black];
\filldraw (\xc,0) circle [radius=0.1,fill=black];
\filldraw (\xd,0) circle [radius=0.1,fill=black];
\filldraw (\xe,0) circle [radius=0.1,fill=black];

\draw[yshift=-0.7\baselineskip,color=red] (0.5*\xa+0.5*\xb,0) node[anchor=north] {{\small$d_0=1$}}
		                                  (0.5*\xb+0.5*\xc,0) node[anchor=north] {{\small$d_1=2$}}
		                                  (0.5*\xc+0.5*\xd,0) node[anchor=north] {{\small$d_2=4$}}
	                                      (0.5*\xd+0.5*\xe,0) node[anchor=north] {{\small$d_3=2$}};

\draw[yshift=1.75\baselineskip,color=blue] (\xb,0) node[anchor=north] {$C^0$}
		                                   (\xc,0) node[anchor=north] {$C^1$}
		                                   (\xd,0) node[anchor=north] {$C^2$};

\draw[yshift=-0.02\baselineskip](-1.48,0) node[anchor=north] {$\bDelta$};

\draw[yshift=1.73\baselineskip](-1.4,-\d) node[anchor=north, yshift=0.2cm] {$\bDelta^*_s$};

\draw[-] (\lxlim,-\d) -- (\rxlim,-\d);
\draw[yshift=1.5\baselineskip]	(\xa,-\d) node[anchor=north,yshift=0.2cm] {$s_1${\small$\equiv$}$s_2$}
		                        (\xb,-\d) node[anchor=north, yshift=0.2cm] {$s_3${\small$\equiv$}$s_4$}
		                        (\xc,-\d) node[anchor=north, yshift=0.2cm] {$s_5${\small$\equiv$}$s_7$};

\foreach \x in {\xa,\xa+\mks,\xb,\xb+\mks,\xc-\mks,\xc,\xc+\mks}
     		\draw[semithick] (\x,{-\d}) -- (\x,{-\d+0.3});
	
\draw[yshift=-0.47\baselineskip](-1.4,-\dd) node[anchor=north] {$\bDelta^*_t$};

\draw[-] (\lxlim,-\dd) -- (\rxlim,-\dd);
\draw[yshift=-0.6\baselineskip]	(\xb,-\dd) node[anchor=north] {$t_1$}
		                        (\xc,-\dd) node[anchor=north] {$t_2$}
		                        (\xd,-\dd) node[anchor=north] {$t_3${\small$\equiv$}$t_4$}
                                (\xe,-\dd) node[anchor=north] {$t_5${\small$\equiv$}$t_7$};
\foreach \x in {\xb+0.5*\mks,\xc,\xd,\xd+\mks,\xe-2*\mks,\xe-\mks,\xe}
     		\draw[semithick] (\x,{-\dd-0.3}) -- (\x,{-\dd});	
\end{tikzpicture}
\label{fig:ex1_part}
\caption{Setting for the spline space in Example \ref{ex:running}.}
\end{figure}
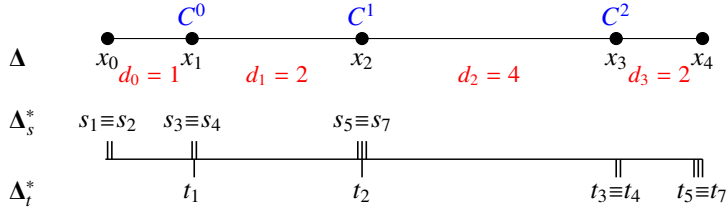
\begin{figure}[tbh]
\center
\begin{tabular}{c}
\begin{tabular}{ccc}
\includegraphics[width=2.0in]{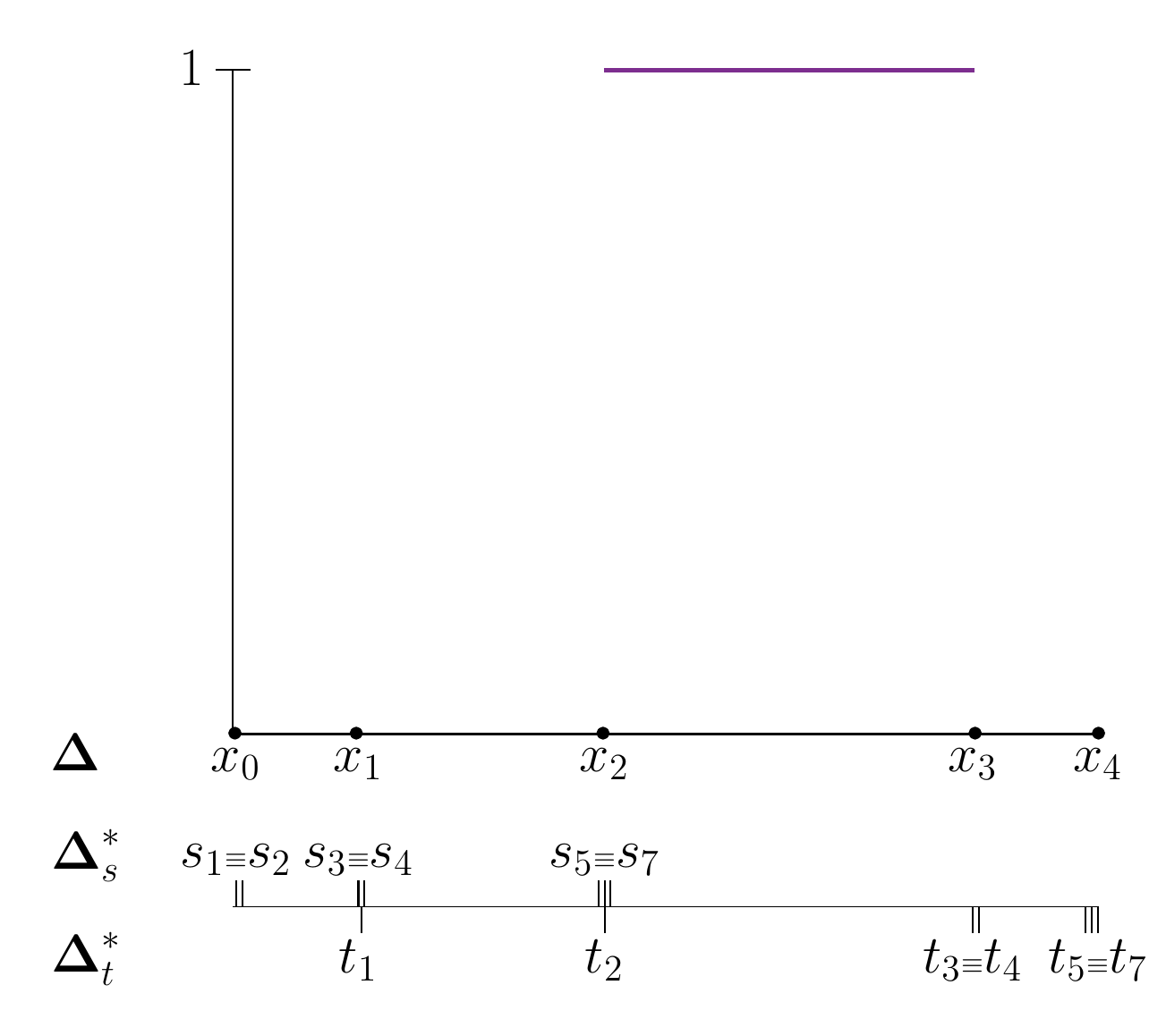} &
\includegraphics[width=2.0in]{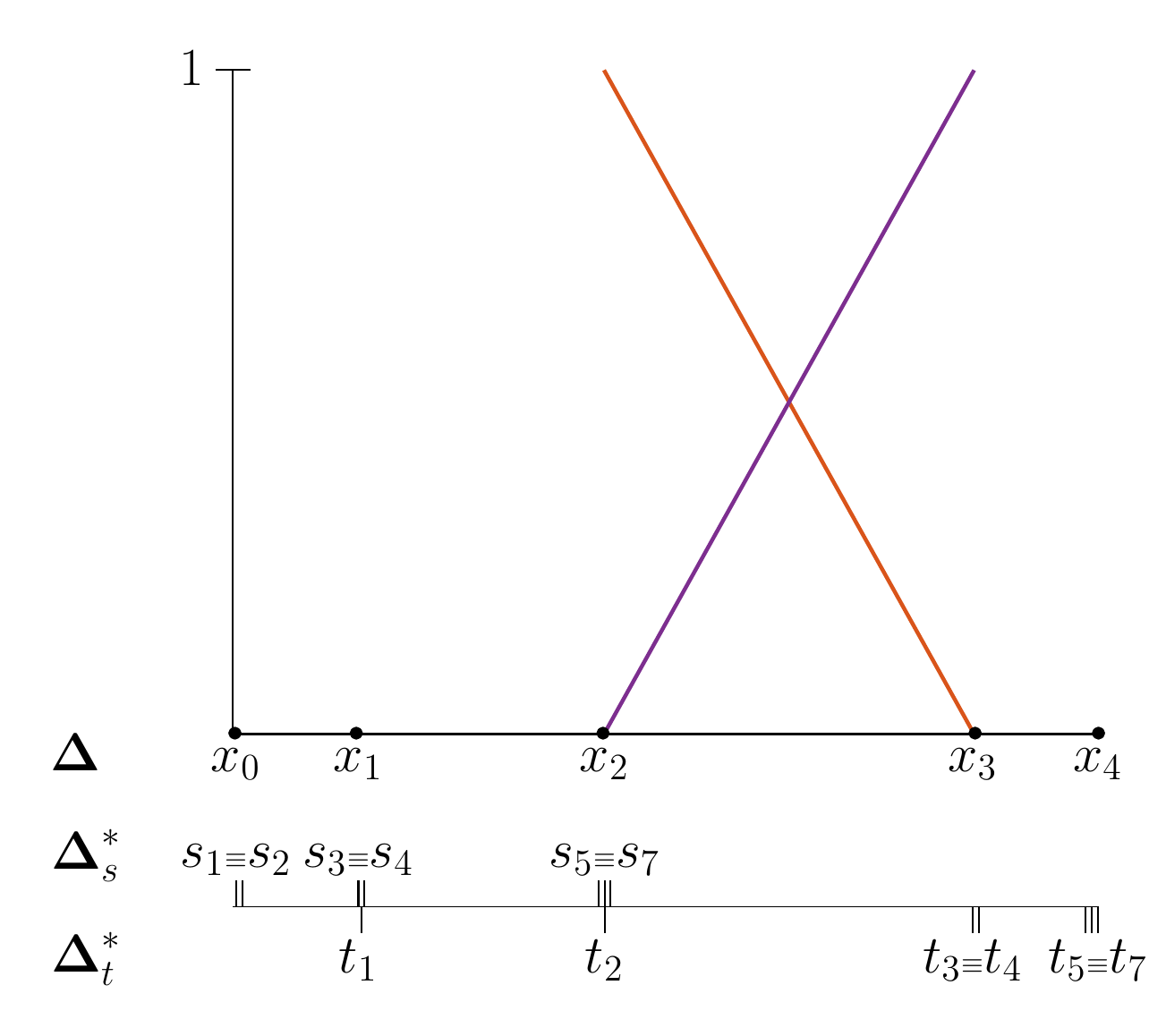} \vspace{0.0cm} &
\includegraphics[width=2.0in]{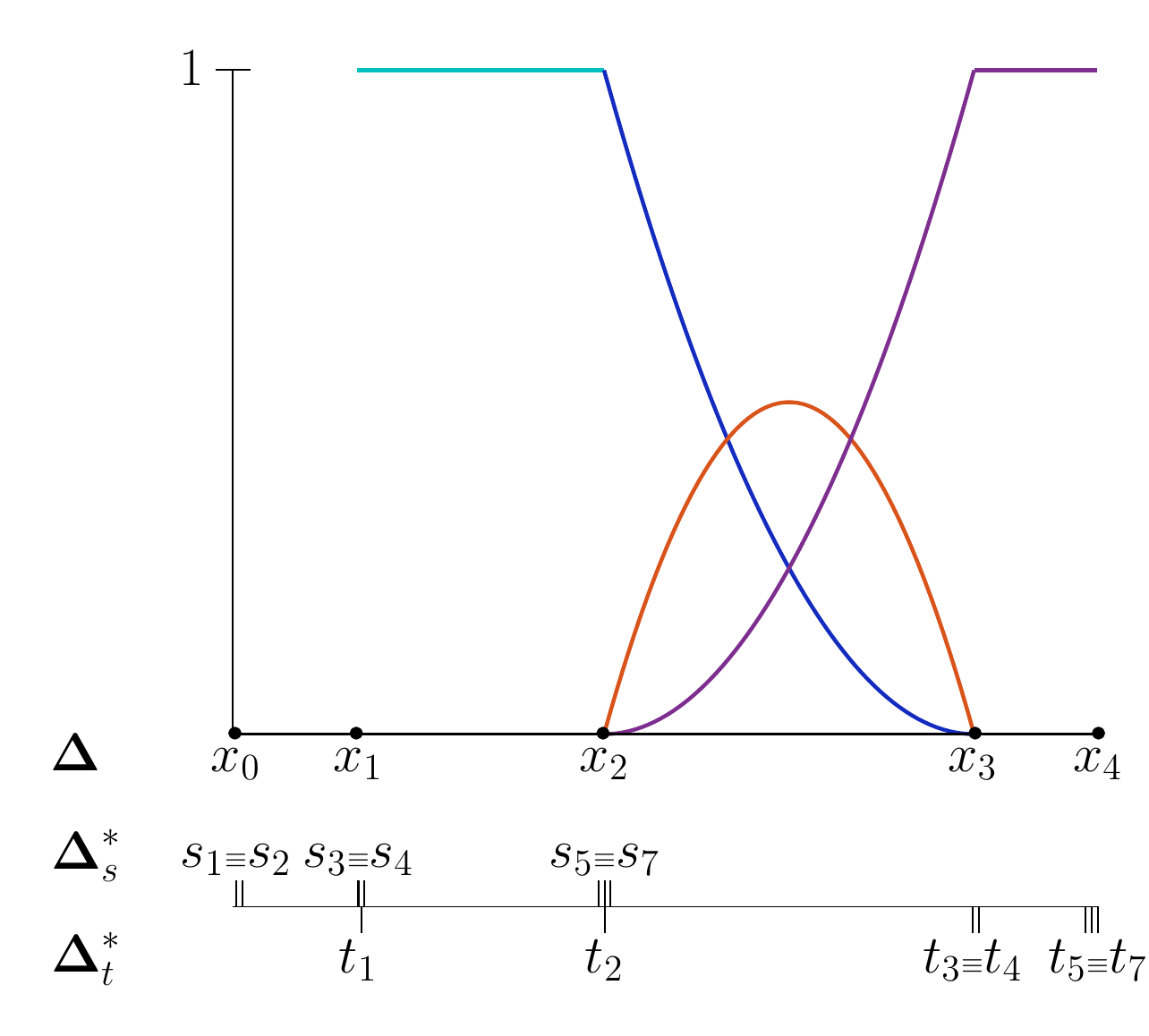} \vspace{0.0cm} \\
(a) $n = 0$ & (b) $n = 1$ & (c) $n = 2$
\end{tabular}\\
\begin{tabular}{cc}
\includegraphics[width=2.0in]{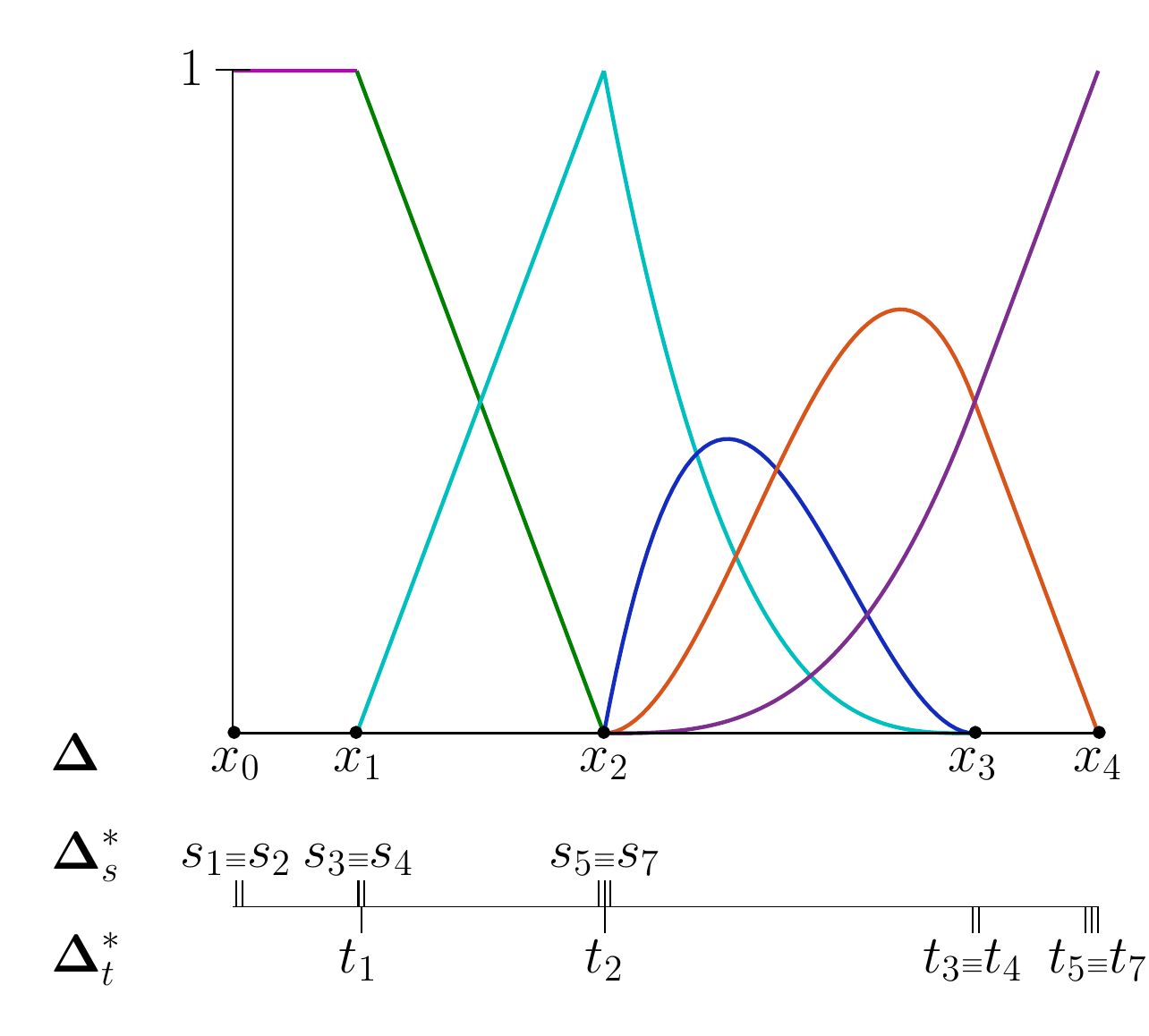} &
\includegraphics[width=2.0in]{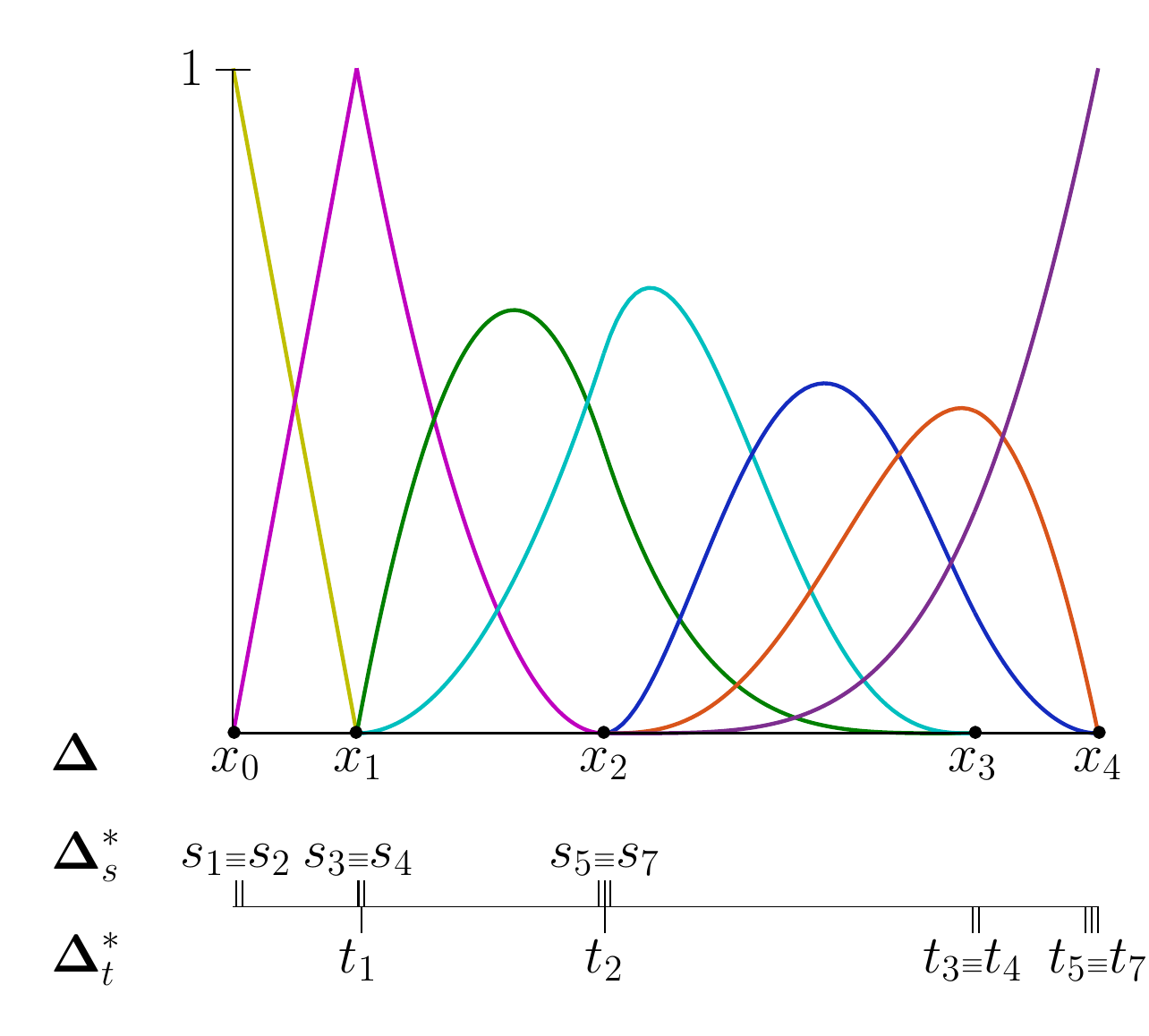} \vspace{0.0cm} \\
(d) $n = 3$ & (e) $n = 4$
\end{tabular}\\
\end{tabular}
\caption{Example \ref{ex:running}: Multi-degree B-spline basis $N_{i,n}$, for $n = 0, \dots,4$, and $i=5-n,\ldots,7$. Only the nonzero basis functions are displayed, namely (a) $N_{7,0}$; (b) $N_{i,1}$, $i=6,7$; (c) $N_{i,2}$, $i=4,\dots,7$; (d) $N_{i,3}$, $i=2,\dots,7$; (e) $N_{i,4}$, $i=1,\dots,7$.}
\label{fig:basisf}
\end{figure}

\begin{exmp}\label{ex:running}
Let us consider a MD-spline space $S(\PPP_{\mathbf{d}},\KKK,\bDelta)$ where the partition $\bDelta$ on the interval $[0,7]$
is given by $\bDelta \coloneqq \{x_1,x_2,x_3\}=\{1, 3, 6\}$, the polynomial segments
on each break-point interval have degrees $\mathbf{d} \coloneqq (d_0,\dots,d_3)=(1,2,4,2)$,
and the smoothness is defined by the vector $\mathcal{K} \coloneqq (k_1,k_2,k_3)=(0,1,2)$.
In Fig.\ \ref{fig:ex1_part}, the sequence of break-point intervals on $\bDelta$ and the associated extended knot
partitions $\bDelta_s^*\coloneqq\left\{s_j\right\}_{j=1}^{7}$ and
$\bDelta_t^*\coloneqq\left\{t_j\right\}_{j=1}^{7}$ are illustrated, together with
the degree of the polynomial segments and the continuity at the break-points.
The potentials of the proposed construction are highlighted by this simple example, which comprises the cases of maximum continuity and local reduction of continuity through multiple knots.
In particular, we impose a $C^0$ join at $x_1$ between the two consecutive sections with different degrees $d_0=1$ and $d_1=2$,
a $C^1$ join between the two sections of degrees $d_1=2$ and $d_2=4$,
and maximum continuity $C^{2}$ (corresponding to no knot in $\bDelta_s^*$)
is imposed in $x_3$.

The function sequence $\{N_{i,n}(x)\}$ built by the recursive process \eqref{def:int_rec} is illustrated
in Fig.\ \ref{fig:basisf} for increasing levels $n=0,\ldots,4$ which correspond to MD-spline spaces with increasing
maximum degree. Note that, by construction, the support of each
$N_{i,n}$ is $[s_i,t_{i-4+n}]$.
The final sequence $\{N_{i,4}(x)\}_{i=1}^{7}$, shown in Fig.\ \ref{fig:basisf} (e),
is the multi-degree B-spline basis of $S(\PPP_{\mathbf{d}},\KKK,\bDelta)$.
\end{exmp}

\section{Computation of the B-spline basis}
\label{sec:computation}

\subsection{Remarks on the existence of a Cox-de Boor type recurrence formula}
\label{sec:Cox-de Boor}
In the present section we investigate the possibility of evaluating MD-splines by means of a Cox-de Boor type recurrence relation
which is known to provide a stable and efficient algorithm for computing with polynomial splines.
To this aim, for suitably defined functions $\phi$'s, we shall consider the following recurrence relation
\begin{equation}
N_{i,n}(x)= \left \{
\begin{array}{ll}
1, \qquad x_j \leq x < x_{j+1} & n = m-d_j,\vspace{0.2cm}\\
\phi_i^{n-1}(x) N_{i,n-1}(x) + (1-\phi_{i+1}^{n-1}(x))N_{i+1,n-1}(x), & n>m-d_j, \vspace{0.2cm}\\
0,  & otherwise,
\end{array}
\right.
\label{eq:rec-form}
\end{equation}
for each $N_{i,n}$, for $n=0,\dots,m$ and $i=m+1-n,\dots,K$, where $m\coloneqq \max_i\{d_i\}$, defined on $[x_j,x_{j+1}) \subset [s_i,t_{i-m+n}]$ with $s_i < t_{i-m+n}$ (undefined $N_{i,n}$ functions shall be regarded as the zero function).
In this way, Cox-de Boor formula for the classical polynomial splines is a special instance of \eqref{eq:rec-form}.

For MD-splines one can exploit the integral recurrence relation \eqref{def:int_rec} to identify the symbolic expression for the  $\phi$'s.
Recalling that we have a clamped knot partition and considering the first B-spline function $N_{m+1-n,n}$ at level $n$, the following expression for $\phi_{m+2-n}^{n-1}$ is derived from \eqref{eq:rec-form}:
\begin{equation}\label{eq:phi}
\phi_{m+2-n}^{n-1}(x) = 1- \frac{N_{m+1-n,n}(x)}{{N_{m+2-n,n-1}(x)}}.
\end{equation}
By applying \eqref{eq:rec-form}, we can successively determine the remaining functions $\phi_i^{n-1}$, $i=m+3-n,\dots,K$.
We will use these relations to investigate whether the $\phi$'s
can be determined a priori without knowing the basis functions and whether they may have a simple form.
To this aim, we substitute in \eqref{eq:phi}, and in the other expressions similarly derived, the B-spline functions obtained through the integral procedure \eqref{def:int_rec} in order to get the $\phi$'s and see if their expression can be reduced to a simpler form.
We have verified that, in general, no simplified form for the $\phi$'s is found, as illustrated in the following example.

%

\begin{figure}
\centering
\subfigure[ $N_{i,0}$, $i=5$]{
{\includegraphics[width=0.3\textwidth]{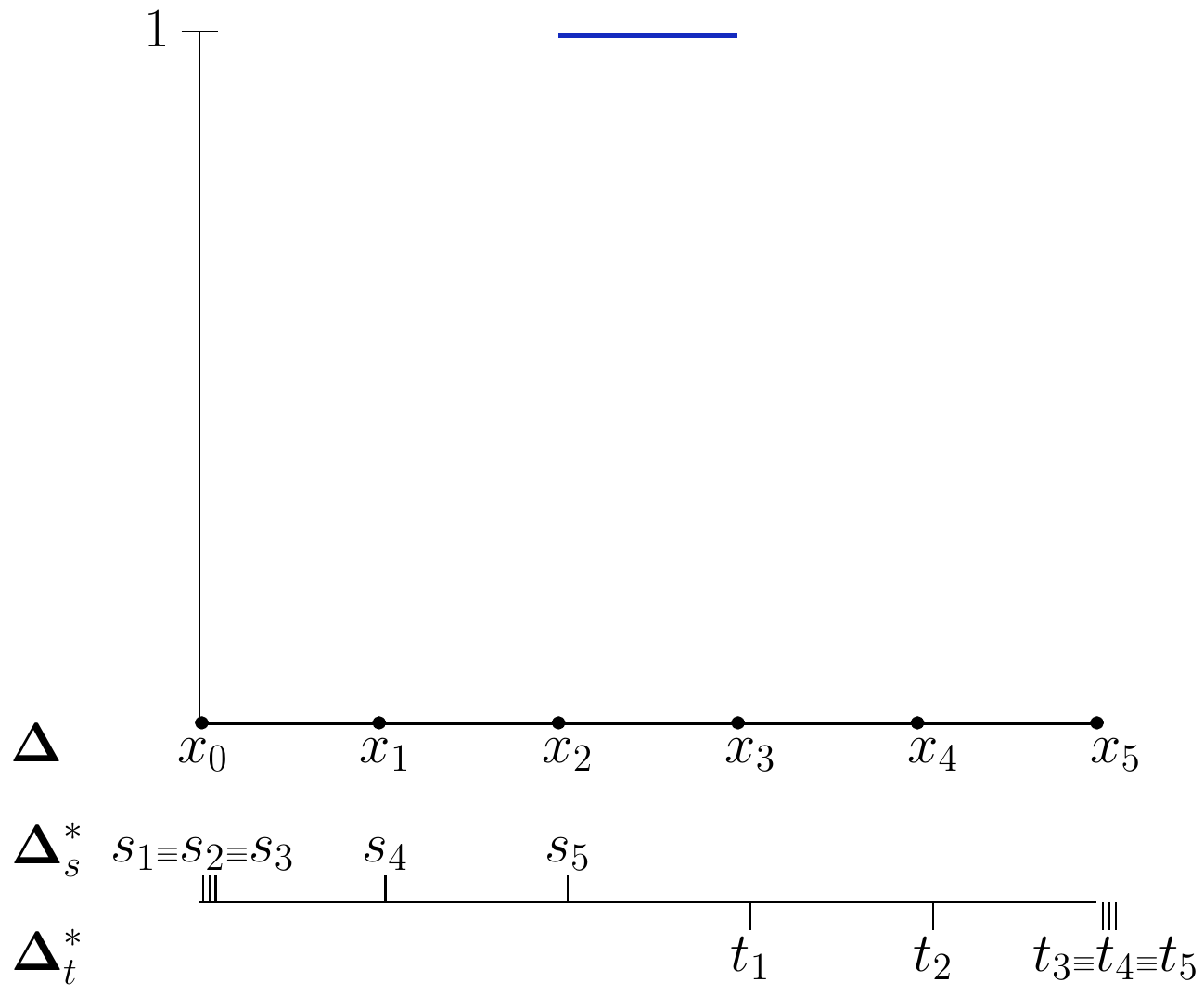}\label{fig:figN1}}
}
\subfigure[$\phi_i^0$, $i=4,5$]{
{\includegraphics[width=0.3\textwidth]{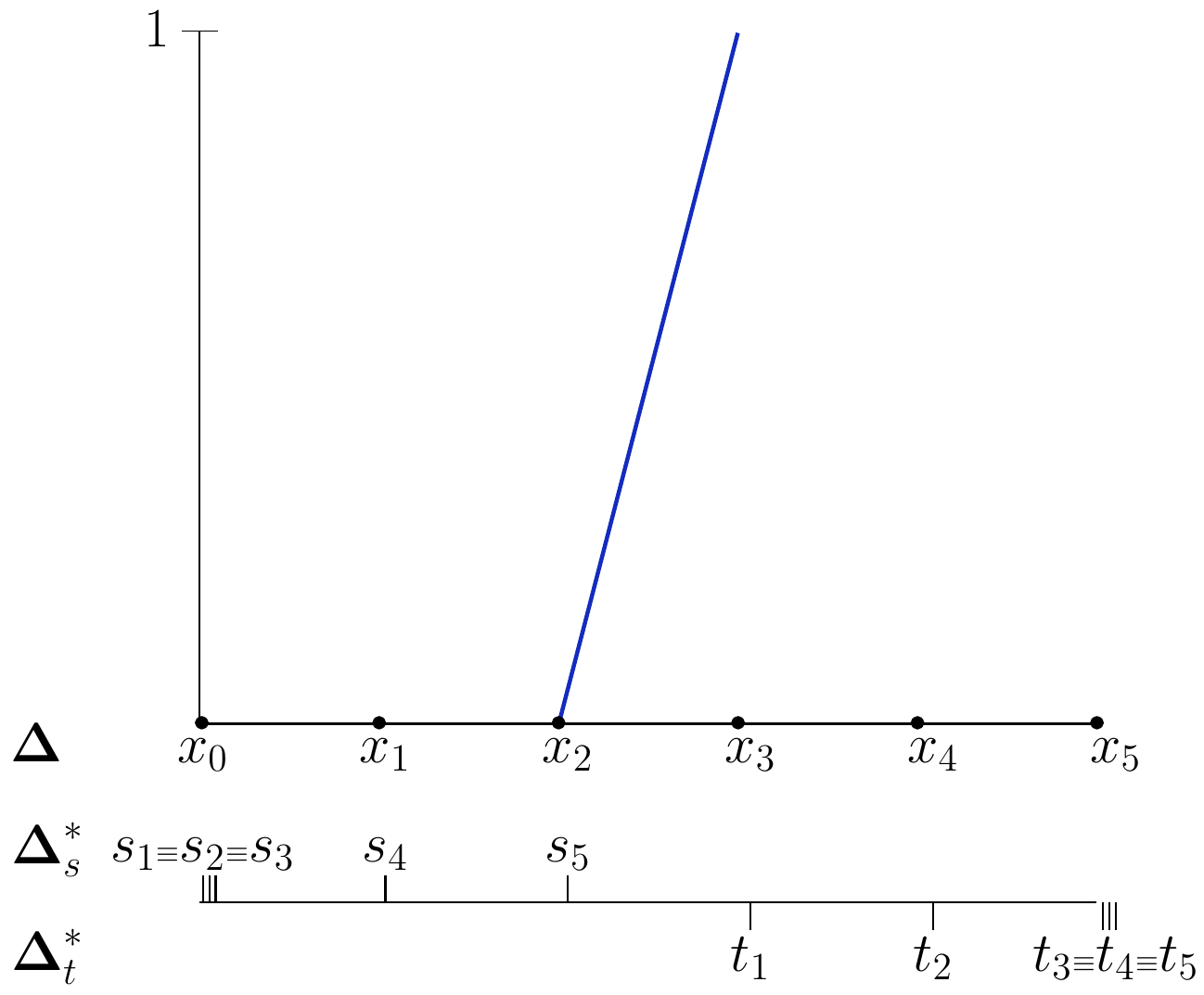}\label{fig:figPhi1}}
}
\subfigure[$N_{i,1}$, $i=4,5$]{
{\includegraphics[width=0.3\textwidth]{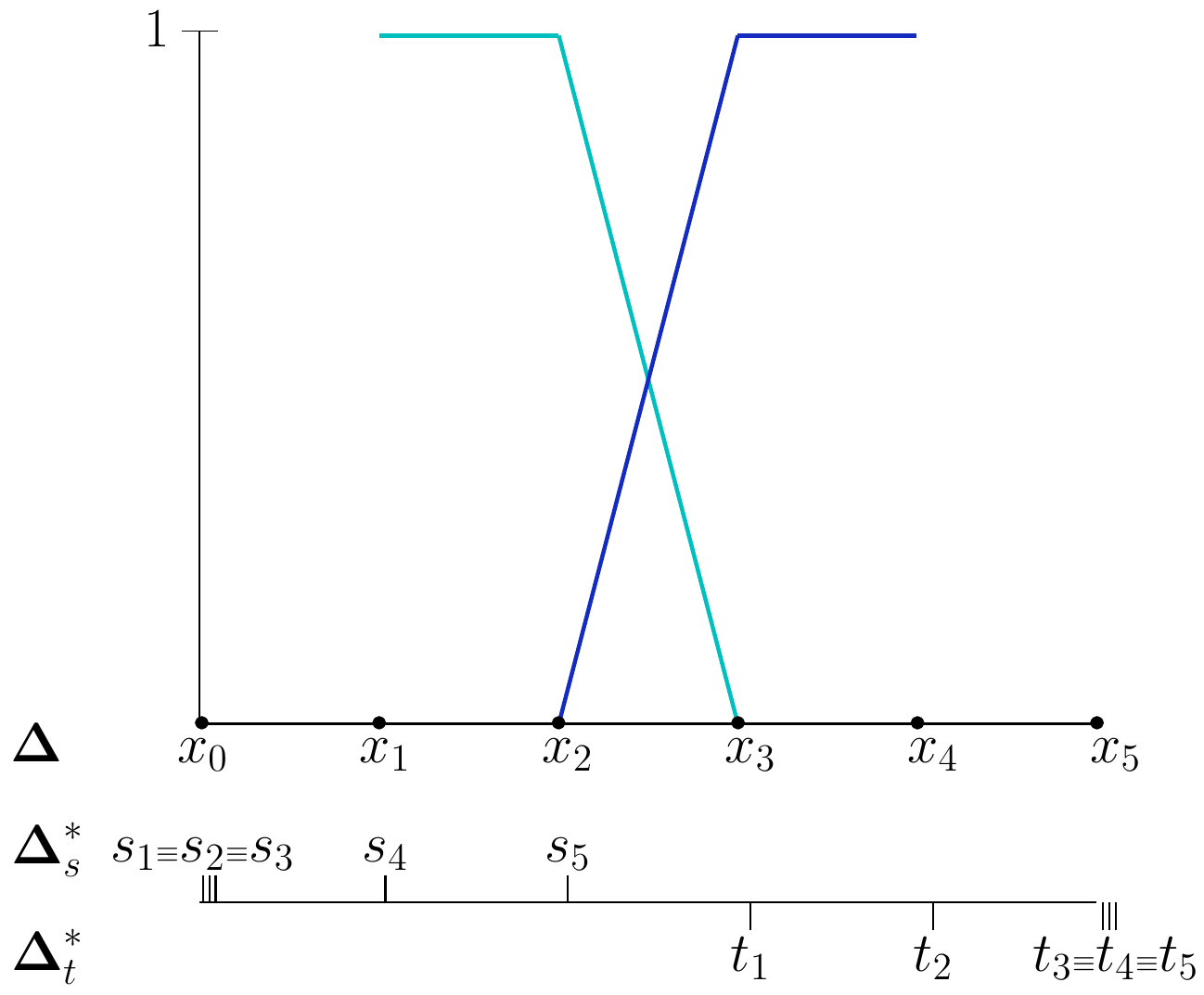}\label{fig:figN2}}
}
\centering
\subfigure[$N_{i,1}$, $i=4,5$]{
{\includegraphics[width=0.3\textwidth]{figures/ex_phi/MD_Ni1-crop}\label{fig:figN1}}
}
\subfigure[$\phi_i^1$, $i=3,4,5$]{
{\includegraphics[width=0.3\textwidth]{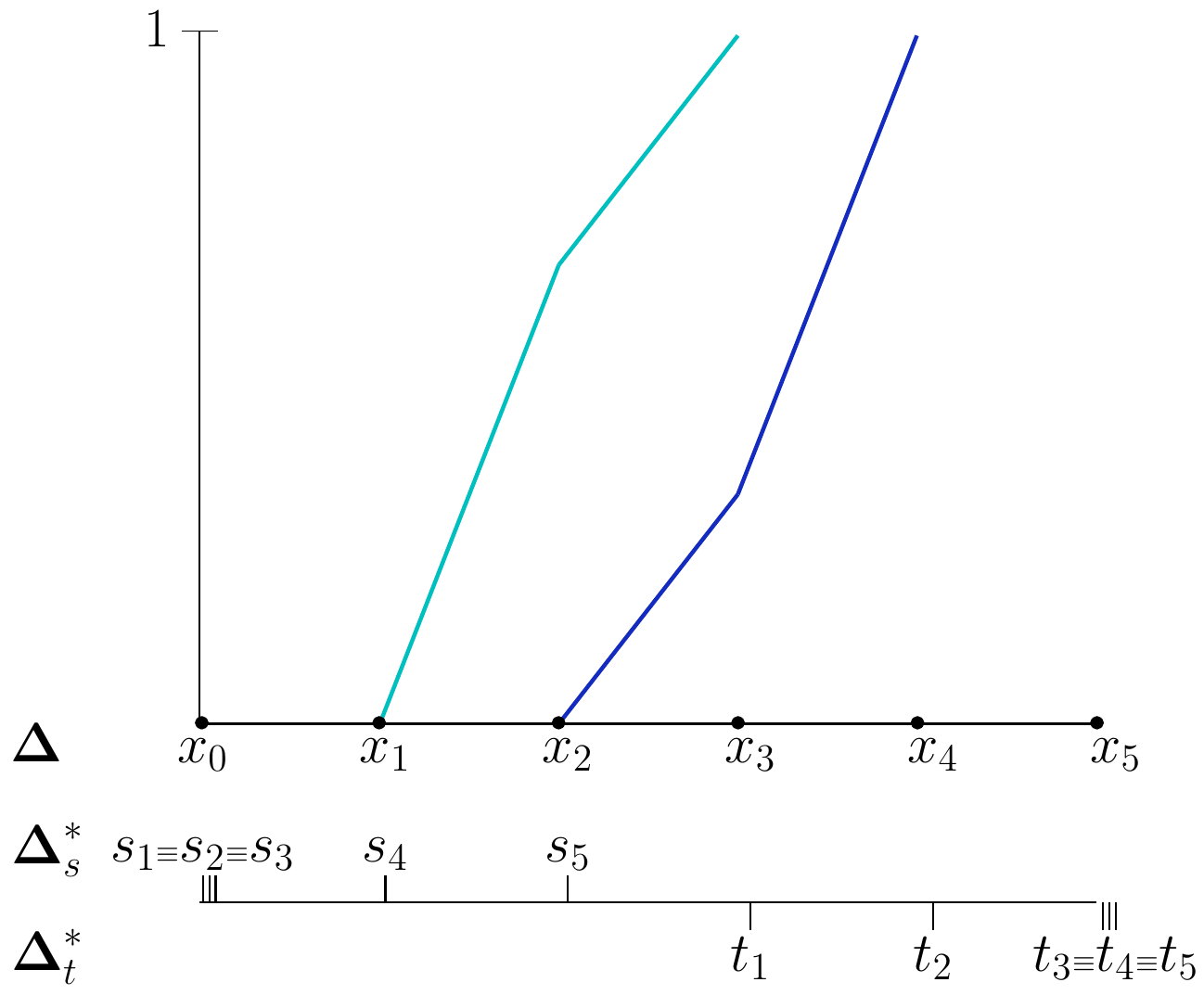}\label{fig:figPhi1}}
}
\subfigure[$N_{i,2}$, $i=3,4,5$]{
{\includegraphics[width=0.3\textwidth]{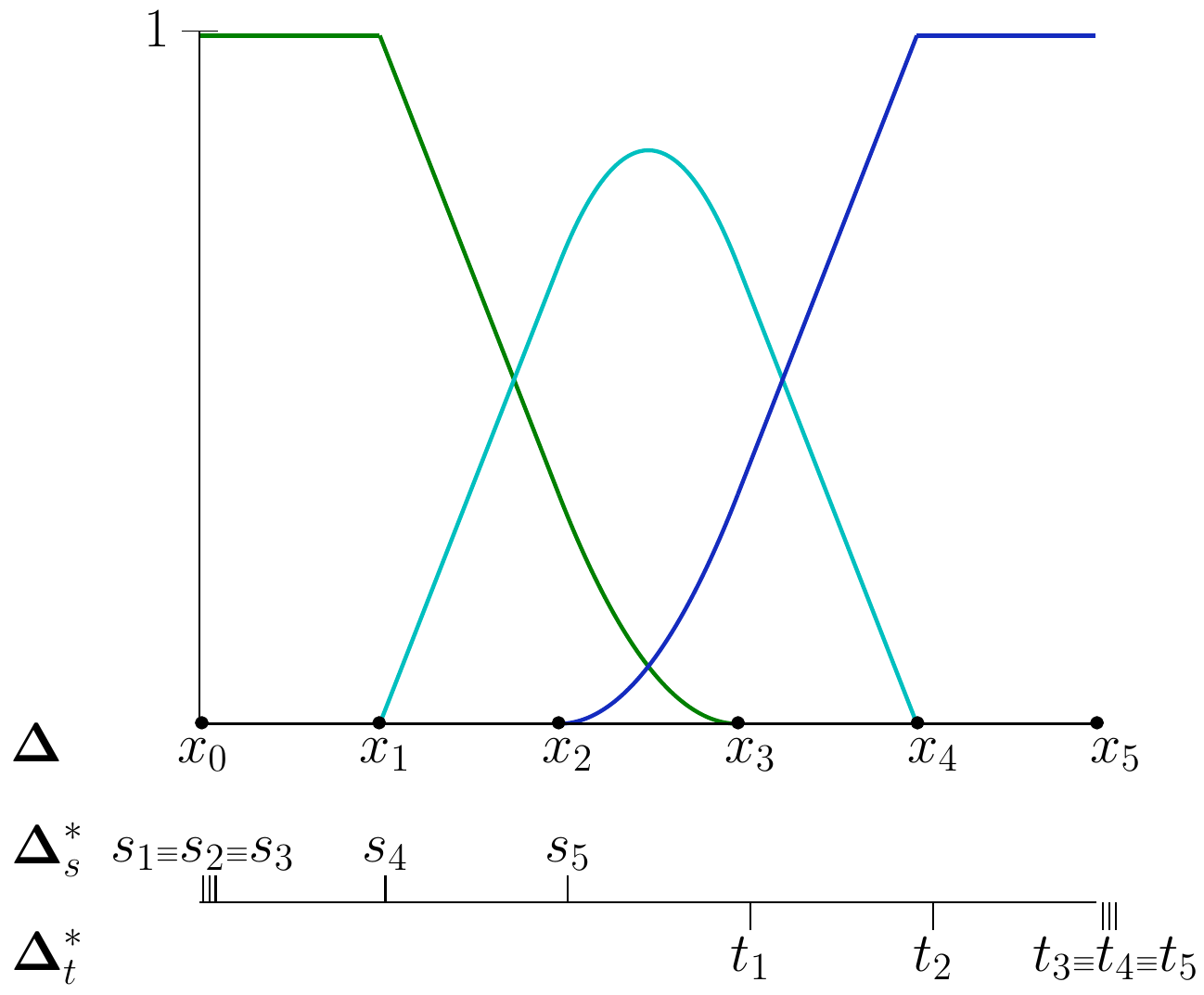}\label{fig:figN2}}
}
\subfigure[$N_{i,2}$, $i=3,4,5$]{
{\includegraphics[width=0.3\textwidth]{figures/ex_phi/MD_Ni2-crop}\label{fig:figN1}}
}
\subfigure[$\phi_i^2$, $i=2,\dots,5$]{
{\includegraphics[width=0.3\textwidth]{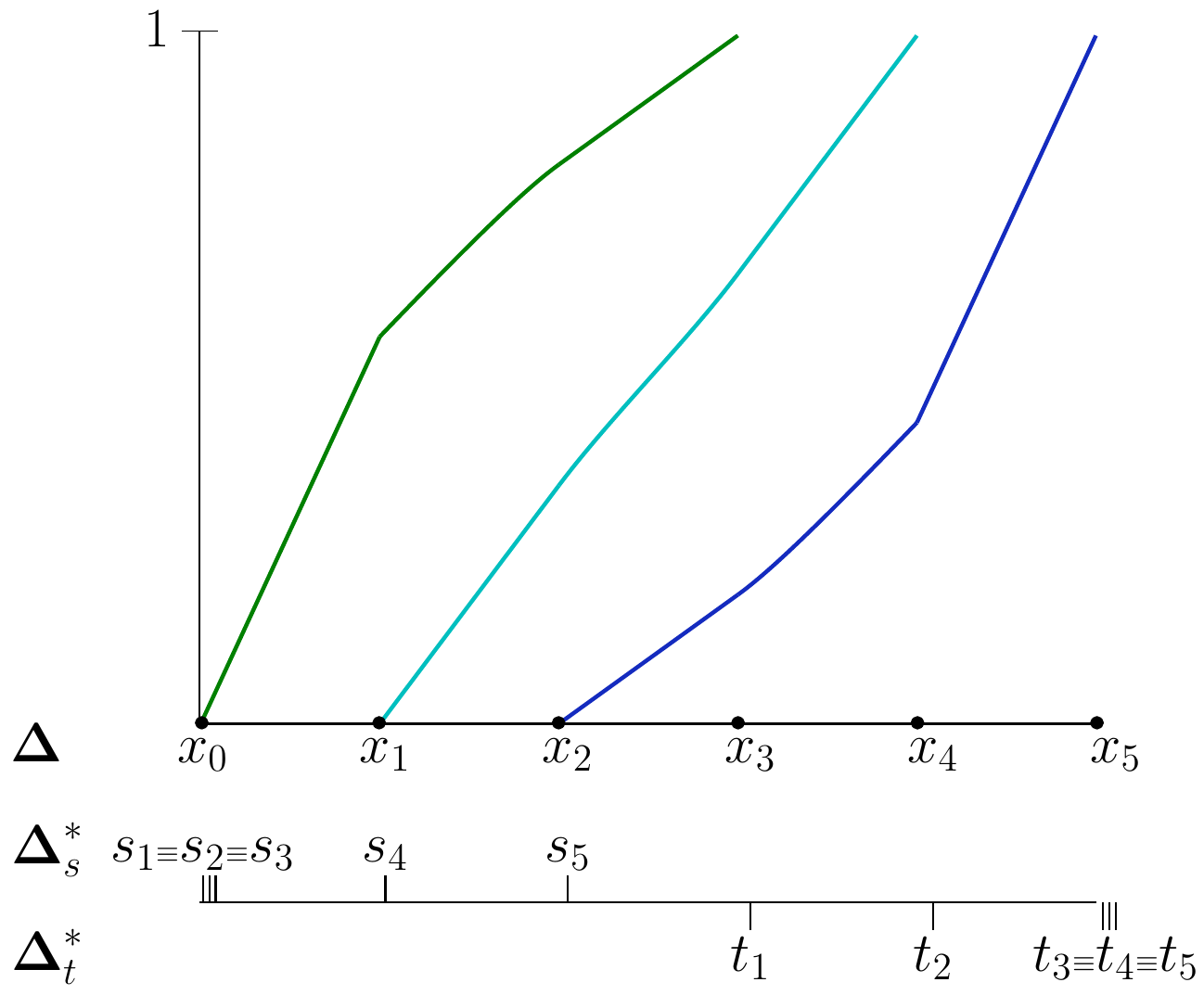}\label{fig:figPhi1}}
}
\subfigure[$N_{i,3}$, $i=2,\dots,5$]{
{\includegraphics[width=0.3\textwidth]{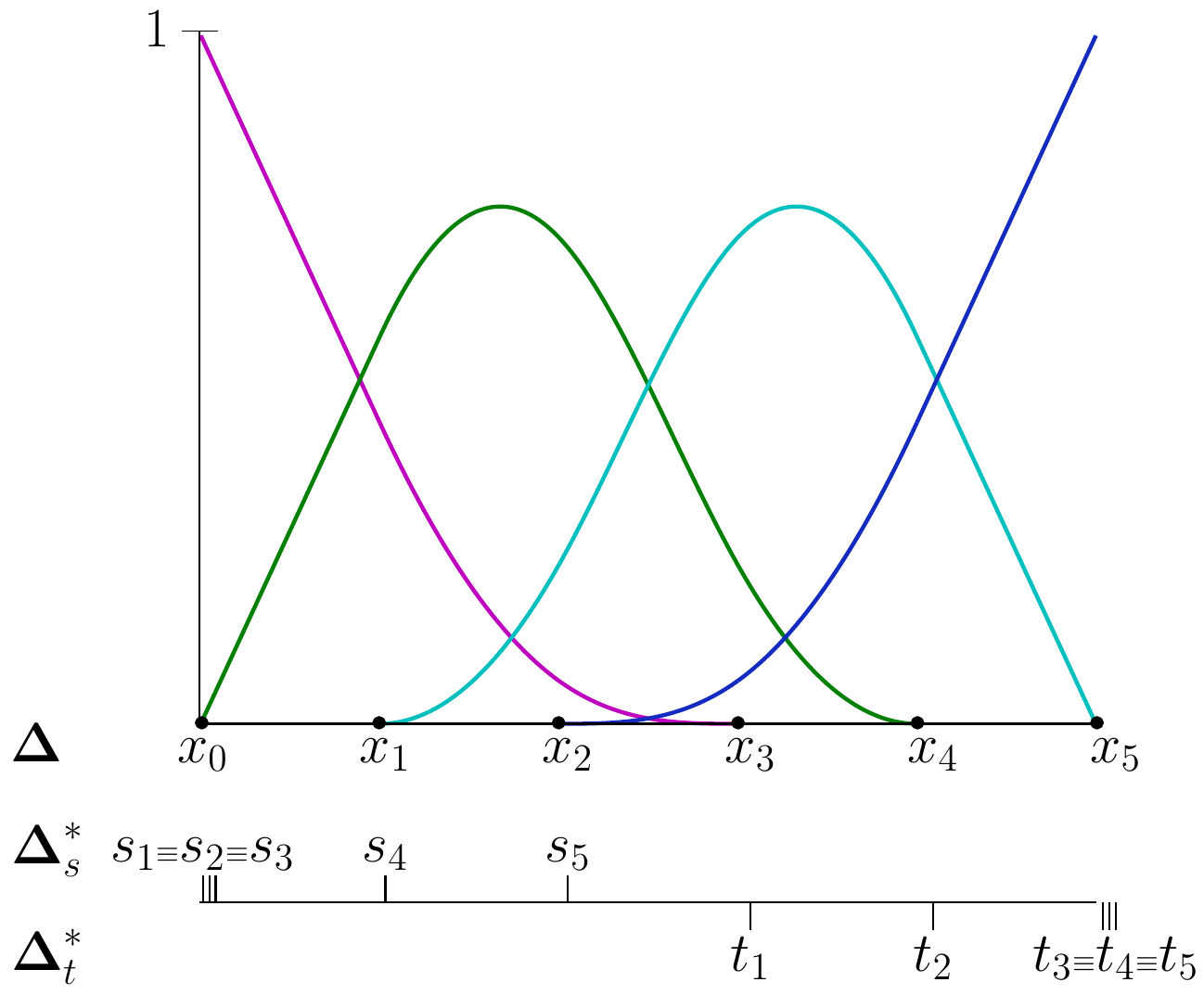}\label{fig:figN2}}
}
\subfigure[$N_{i,3}$, $i=2,\dots,5$]{
{\includegraphics[width=0.3\textwidth]{figures/ex_phi/MD_Ni3-crop}\label{fig:figN1}}
}
\subfigure[$\phi_i^3$, $i=1,\dots,5$]{
{\includegraphics[width=0.3\textwidth]{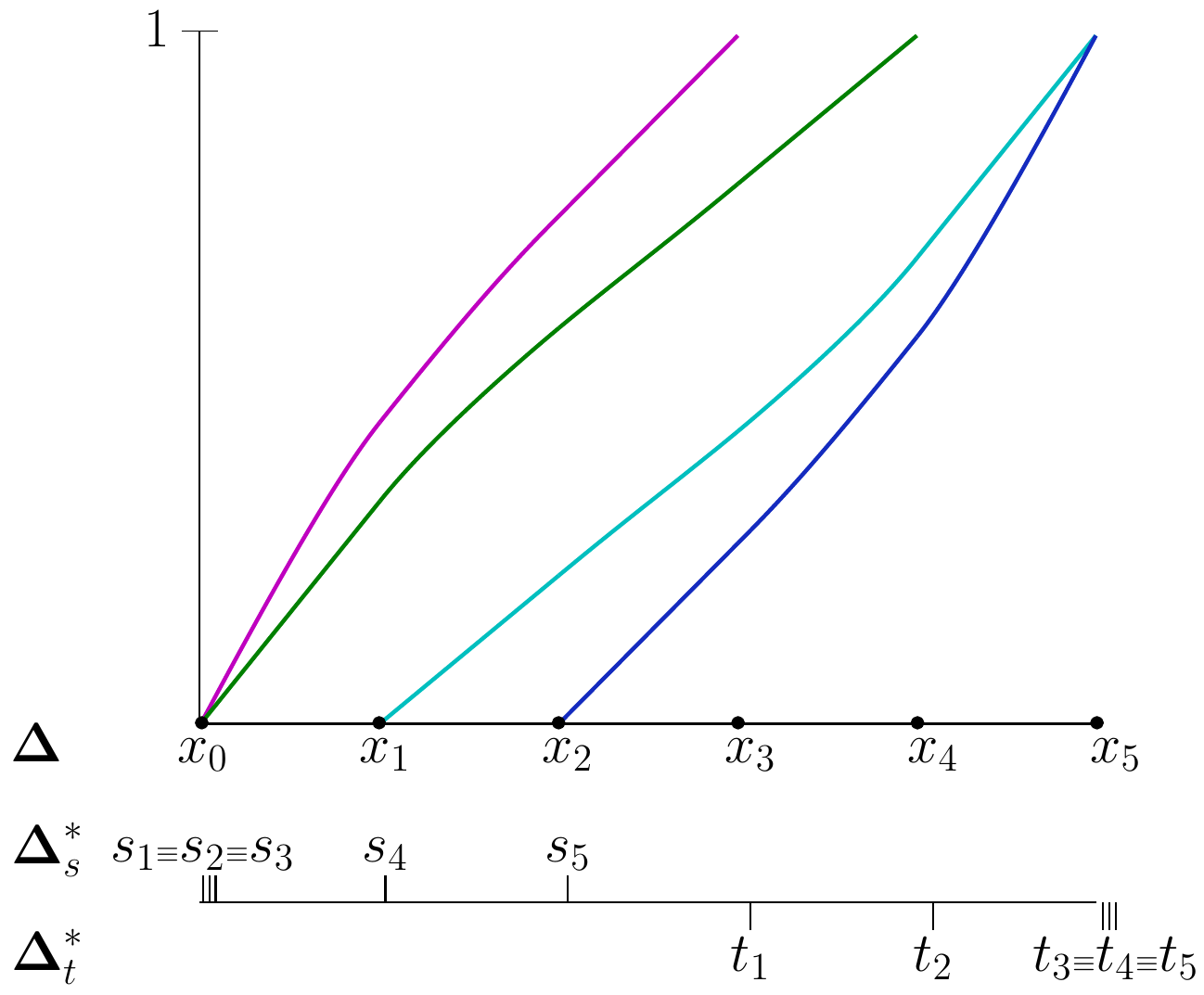}\label{fig:figPhi1}}
}
\subfigure[$N_{i,4}$, $i=1,\dots,5$]{
{\includegraphics[width=0.3\textwidth]{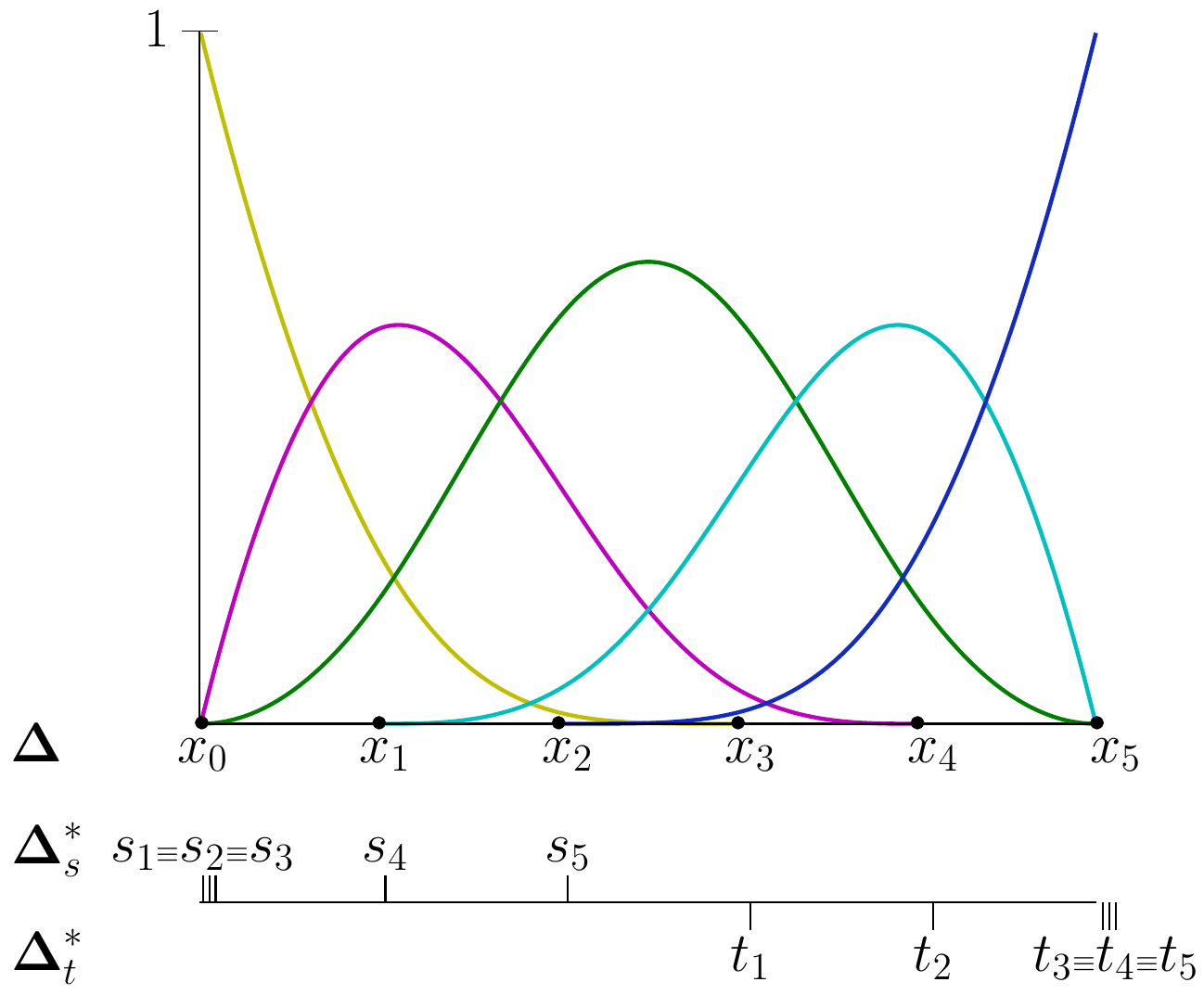}\label{fig:figN2}}
}
\caption{Example \ref{ex:ex2}: Basis functions $N_{i,n}$, $n=0,\dots,4$, and functions $\phi_i^n$, $n=0,\dots,3$.}
\label{fig_ex_phi}
\end{figure}
\begin{exmp}
\label{ex:ex2}
Let $S(\PPP_{\mathbf{d}},\KKK,\bDelta)$ be the space of MD-splines defined on the interval $[a,b]=[0,5]$, with break-point partition
$\bDelta=\{1,2,3,4\}$, degree vector $\mathbf{d}=(2,3,4,3,2)$, and continuities $\KKK=(2,3,3,2)$.
The third column of Fig.\ref{fig_ex_phi} reports the B-spline basis functions $N_{i,n}$ obtained by formula \eqref{eq:rec-form},
\ie by combining the functions $N_{i,n-1}$ in the first column with the functions $\phi_{i}^{n-1}$
in the second column, for each level $n=0,\ldots,4$.
Each $\phi_{i}^{n-1}$ is a piecewise functions defined in the interval $[s_i,t_{i-m+n-1}]$, as the associated $N_{i,n-1}$.
However, these functions are piecewise linear only for $n=0,1$ (first and second row in Fig.\ \ref{fig_ex_phi}), while for $n=2,3$ (third and fourth row in Fig.\ \ref{fig_ex_phi}) they assume
a piecewise rational form of higher degree. For example, the two functions $\phi_i^3$, $i=2,3$, starting at $x_0=0$ are defined as follows
\[
\phi_2^3(x)=
\begin{cases}
\scalebox{1.2}{$\frac{288x^2-475x}{61(9x-16)}$}, & 0 \leq x \leq 1,\\[1.3ex]
\scalebox{1.2}{$\frac{6x^3-297x^2+301x+119}{61(3x^2-15x+19)}$}, & 1 \leq x \leq 2,\\[1.3ex]
\scalebox{1.2}{$\frac{16x+13}{61}$}, & 2 \leq x \leq 3,
\end{cases}  \qquad
\phi_3^3(x)=
\begin{cases}
\scalebox{1.2}{$\frac{32x}{99}$}, & 0 \leq x \leq 1,\\[1.3ex]
\scalebox{1.2}{$\frac{16(-116x^3+285x^2+246x-181)}{297(29x^2-97x+29)}$}, & 1 \leq x \leq 2,\\[1.3ex]
\scalebox{1.2}{$\frac{16(45x^4-278x^3+177x^2+1182x-1045)}{297(15x^3-119x^2+277x-149)}$}, & 2 \leq x \leq 3,\\[1.3ex]
\scalebox{1.2}{$\frac{64x+41}{297}$}, & 3 \leq x \leq 4.\\
\end{cases}
\]
For instance, direct verification shows that the numerator of $\phi_2^3(x)$, for $0 \leq x \leq 1$, is precisely the difference of the level $n-1$ and $n$ basis functions $N_{2,3}$ and $N_{1,4}$, whereas the denominator is the B-spline basis of level $n-1$, \ie $N_{2,3}$.
More generally, it can be observed that
the numerator and denominator of the $\phi^{n-1}$ functions may have the same degrees as those of the elements of the B-spline basis that they define at level $n$ by means of \eqref{eq:rec-form} (see, \eg the second piece of $\phi_2^3$ and the second and third piece of $\phi_3^3$).
This simple experiment suggests that the level $n$ basis cannot be obtained, in general, as a combination of $\phi$'s having a degree which is lower than that of the target basis.
\end{exmp}

The previous example leads us to conclude that Cox-de Boor recurrence relation cannot promptly be generalized in order to work with MD-splines.
This is motivated by the observation that, at least in general, the $\phi$'s do not have a low-degree \qtext{simple} form, nor their expression can be determined a priori.
However, in some special cases,  explicit and easily computable formul\ae \ for the $\phi$'s can be found.

In particular, let us consider the space  $S(\PPP_{\mathbf{d}},\KKK,\bDelta)$ of MD-splines that are required to be $C^k$, $k=0,1$,
between two segments of different degrees and $C^{k}$, $0\leq k\leq d-1$ at the join of two segments of the same degree $d$.
In this case, we devise a simple formula such that each basis function $N_{i,n}$ can be evaluated by means of \eqref{eq:rec-form} for suitable $\phi$'s.
Algorithm 1 provides the evaluation procedure. We omit the related proof, as discussing these details goes
beyond the scope of this paper.
It is interesting to notice, however, that the functions $\phi^{n-1}$
are linear for $n=0,\ldots, m-1$, while only at the last level $n=m$ they assume a piecewise linear form
on an interval defined by at most 3 break-point intervals.

\resizebox{.93\textwidth}{!}{%
\begin{minipage}{\textwidth}
\begin{algorithm}[H]
\begin{flushleft}
\begin{itemize}[leftmargin=*]
\item[--] For $n=1\dots,m-1$, compute $N_{i,n}$, $i=m+1-n,\dots,K$, through \eqref{eq:rec-form} with
\begin{equation}\label{eq:phi_special}
\phi_i^{n-1}=\frac{x-s_i}{t_{i-m+n-1}-s_i}, \quad i=m+2-n,\dots,K.
\end{equation}
\item[--] For $n=m$,  compute $N_{i,m}$, $i=1,\dots,K$, by \eqref{eq:rec-form} with the $\phi_i^{m-1}$ computed as follows
\begin{itemize}
\item  Compute the number of break-point intervals $n_i$ in $\supp N_{i,m-1} = [s_i,t_{i-1}]$
\ie $n_i= pt_{i-1}-ps_i$.
\item  If $n_i=2$ or $n_i=3$, and the degrees $d_{j}$, $j=ps_i,\dots,pt_{i-1}-1$ are not equal, then
\vspace{-0.4cm}
\begin{equation}\label{eq:philin}
\phi_i^{m-1}(x)=
\left\{ \begin{array}{ll}
\displaystyle{\frac{1}{\delta}\frac{x-x_{ps_i}}{d_{ps_i}}}, & x\in[x_{ps_i},x_{{ps_i}+1}], \vspace{0.2cm}\\
\displaystyle{\frac{1}{\delta}\left(\frac{x_{ps_i+1}-x_{ps_i}}{d_{ps_i}}+\frac{x-x_{ps_i+1}}{d_{ps_i+1}}\right)}, & x\in[x_{ps_i + 1},x_{ps_i+2}], \vspace{0.2cm}\\
\displaystyle{\frac{1}{\delta}\left(\frac{x_{ps_i+1}-x_{ps_i}}{d_{ps_i}}
+\frac{x_{ps_i+2}-x_{ps_i+1}}{d_{ps_i+1}}+\frac{x-x_{ps_i+2}}{d_{ps_i+2}}\right)}, & x\in[x_{ps_i+2},x_{pt_{i-1}}], \vspace{0.2cm}\\
\end{array} \right.
\quad \;\;
\displaystyle{\delta\coloneqq \sum_{j=ps_i}^{pt_{i-1}-1} \frac{x_{j+1}-x_{j}}{d_j}}.
\end{equation}

[If $n_i=2$ only the first two equations are applied].
\item Otherwise (same degrees $d_j$ or $n_i=1$ and $n_i>3$) compute $\phi_i^{m-1}(x)$ using relation \eqref{eq:phi_special}.
\vspace{-0.2cm}
\end{itemize}
\end{itemize}
\end{flushleft}
\caption{}
\label{alg:algor1}
\end{algorithm}
\end{minipage}
}

\clearpage
\begin{rem}
In \cite{Li2012} the authors proposed a Cox-de Boor type recurrence formula for MD-splines characterized by exact $C^1$
continuity between two adjacent segments with different degrees, and exact $C^{d-1}$ continuity between two adjacent segments
of same degree $d$.
Our proposal for this specific case, formulated in Algorithm 1,
differs from the evaluation method developed in \cite{Li2012}. In addition, the procedure in Algorithm 1 allows for
attaining any order of continuity $C^r$, $r=0,\dots,d-1$, between adjacent segments of the same degree and $C^r$, $r=0,1$, between segments of different degree.
\label{sec:special_case}
\end{rem}

\subsection{Transition functions for multi-degree splines}
\label{sec:trans_fun}

In this section we introduce the transition functions as a tool for the computation of the B-spline basis. Their name takes up the terminology of previous papers, dealing with splines having sections all of the same dimension, belonging to either polynomial spaces \cite{ABC2013a,BCR2013a} or to the wider class of extended Chebyshev spaces \cite{BCR2017}.

\begin{defn}[Transition functions]\label{def:trans_function}
Let $S(\PPP_{\mathbf{d}},\KKK,\bDelta)$ be a multi-degree spline space of dimension $K$, with $\bDelta=\{x_i\}_{i=1}^{q}$
a partition of $[a,b]$, $\bDelta_s^*=\{s_i\}_{i=1}^{K}$ and  $\bDelta_t^*=\{t_i\}_{i=1}^{K}$ the associated left and right extended partitions, and with B-spline basis
$\left\{N_{i,m}\right\}_{i=1}^{K}$.
The associated \emph{transition functions} $\left\{f_i\right\}_{i=1}^K$ are given by:
\begin{equation}\label{eq:f_i_N_i}
 f_{i}\coloneqq \sum_{j=i}^{K} N_{j,m}, \quad i=1,\dots,K.
\end{equation}
\end{defn}

Assuming $f_{K+1}\equiv 0$, we can reverse relation \eqref{eq:f_i_N_i} expressing the B-spline basis in terms of transition functions as
\begin{equation}\label{eq:N}
N_{i,m} = f_{i}-f_{i+1}, \quad i=1,\dots,K.
\end{equation}

With reference to the end point property of the B-spline basis (see Definition \ref{def:B-spline basis}), we introduce the quantities
\begin{equation}\label{eq:KisKit}
k_i^s\coloneqq d_{ps_i}-\max \{ j\geq 0 \ | \ s_i=s_{i+j} \}-1 \quad \textrm{and} \quad k_i^t\coloneqq d_{pt_i-1}-\max \{ j \geq 0 \ | \ t_{i-j}=t_i \}-1.
\end{equation}
Hence, from the properties of the B-spline basis we can deduce that $f_{1}(x) = 1$, for all $x \in [a,b]$,
and that the piecewise functions $f_{i}$, $i=2,\dots,K$, have the following characteristics:
\begin{enumerate}[label=\alph*)]
\item \label{propty:f1} 
$f_i$ is nonnegative and
\begin{equation}
f_{i}(x)=
\begin{cases}
0, & x \leq s_i,\\
1, & x \geq t_{i-1};
\end{cases}
\end{equation}

\item \label{propty:f2} 
$f_{i}$ vanishes exactly $k_i^s+1$ times at $s_i$ and $1-f_{i}$ vanishes exactly $k_{i-1}^t+1$ times at $t_{i-1}$;

\item \label{propty:Tayor}  the Taylor expansions of $f_i$ at $s_i$ and $t_{i-1}$ show that
\begin{equation}\label{eq:Taylor}
D_+^{k_{i}^s+1}f_{i}(s_{i}) > 0 \qquad
\text{and} \qquad
(-1)^{k_{i-1}^t+1}D_-^{k_{i-1}^t+1}f_{i}(t_{i-1}) > 0.
\end{equation}
\end{enumerate}

Property a) entails that $f_{i}$, $i=2,\dots,K$, is nontrivial (i.e. it is neither the constant function zero or one) in the interval $[s_i,t_{i-1}]$ only. In particular,
denoted by $x_{ps_i},\dots, x_{pt_{i-1}}$ the break-points of $\bDelta$ contained in $[s_i,t_{i-1}]$,
the continuity conditions at the break-points along with property b) yield the following relations:
\begin{equation}
\begin{alignedat}{3}
& D^r_+ f_{i}(x_{ps_i})=0, &\qquad& r=0,\dots,k_{i}^s,\\
& D^r_- f_{i}(x_{j})=D^r_+ f_{i}(x_{j}), &\qquad& r=0,\dots,k_{j}, \quad j=ps_i+1,\dots,pt_{i-1}-1, \\\label{eq:cond_tf}
& D^r_- f_{i}(x_{pt_{i-1}})=\delta_{r,0}, &\qquad& r=0,\dots,k_{i-1}^t.
\end{alignedat}
\end{equation}

\smallskip
In Proposition \ref{prop:unique} we show that each transition function $f_i$, $i=2,\dots,K$, can simply be computed by solving the linear system represented by the conditions \eqref{eq:cond_tf}.
This prompts us to construct the B-spline basis by first computing the transition functions through \eqref{eq:cond_tf} and subsequently applying \eqref{eq:N}.
By way of illustration, the transition functions for the MD-spline space considered in Example \ref{ex:running} are depicted in Figure \ref{fig:tf}. Their combinations, according to \eqref{eq:N}, yield the B-spline basis functions in Fig.\ \ref{fig:basisf} (e).

\begin{figure}[tbh]
\centering
\includegraphics[width=3.0in]{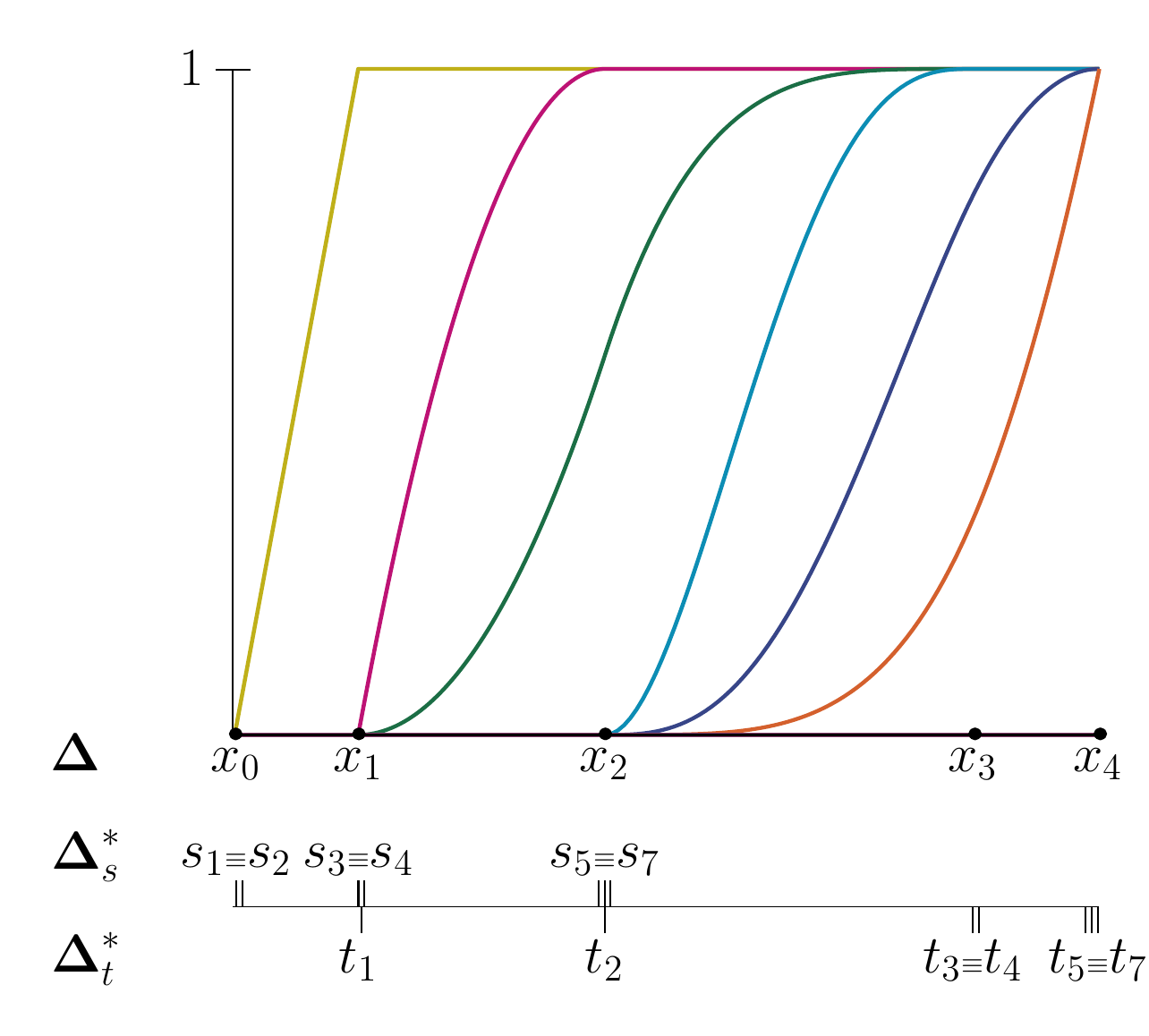}
\caption{Transition functions $f_i$, $i=2,\ldots,7$, for the MD-spline space in Example \ref{ex:running}.
The related B-spline basis is depicted in Fig.\ \ref{fig:basisf} (e).}
\label{fig:tf}
\end{figure}

\begin{prop}\label{prop:unique}
Each transition function $f_i$ $i=2,\dots,K$, in the interval $[s_i,t_{i-1}]$, is uniquely determined by conditions \eqref{eq:cond_tf}.
\end{prop}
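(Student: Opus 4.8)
The plan is to regard each $f_i$, on the interval where it is nonconstant, as an element of a small multi-degree spline space, and to show that \eqref{eq:cond_tf} is a \emph{square} linear system on that space whose homogeneous counterpart has only the trivial solution. Concretely, I would fix $i\in\{2,\dots,K\}$, put $p\coloneqq ps_i$ and $r\coloneqq pt_{i-1}$ (the break-point indices of $s_i$ and $t_{i-1}$, so that $[s_i,t_{i-1}]=[x_p,x_r]$), and let $V$ be the multi-degree spline space on $[x_p,x_r]$ with degrees $d_p,\dots,d_{r-1}$ and smoothness $k_{p+1},\dots,k_{r-1}$ at the interior break-points $x_{p+1},\dots,x_{r-1}$; by the dimension count recalled in Section \ref{sec:definition}, $\dim V=d_p+1+\sum_{j=p+1}^{r-1}(d_j-k_j)$. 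Every element of $V$ already satisfies the interior continuity equations, \ie the middle line of \eqref{eq:cond_tf}, so on $V$ the conditions \eqref{eq:cond_tf} amount to the $k_i^s+1$ equations $D^r_+g(x_p)=0$, $r=0,\dots,k_i^s$, at the left end together with the $k_{i-1}^t+1$ equations $D^r_-g(x_r)=\delta_{r,0}$, $r=0,\dots,k_{i-1}^t$, at the right end; and $f_i$ from \eqref{eq:f_i_N_i}, restricted to $[x_p,x_r]$, is one element of $V$ solving them, by property \ref{propty:f2} together with the $C^{k_j}$-continuity at the interior break-points. Hence existence holds and only uniqueness is at stake.

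The crux is the counting identity
\[
(k_i^s+1)+(k_{i-1}^t+1)=\dim V,
\]
\ie that the above system is square. To establish it I would unwind \eqref{eq:KisKit}: with $\alpha\coloneqq\max\{j\ge0 : s_i=s_{i+j}\}$ and $\beta\coloneqq\max\{j\ge0 : t_{i-1-j}=t_{i-1}\}$ one has $k_i^s+1=d_p-\alpha$ and $k_{i-1}^t+1=d_{r-1}-\beta$, where $\alpha$ is the number of knots of $\bDelta_s^*$ lying strictly to the right of $s_i$ and equal to $x_p$, and $\beta$ the number of knots of $\bDelta_t^*$ lying strictly to the left of $t_{i-1}$ and equal to $x_r$. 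From the explicit make-up of the extended partitions in Definition \ref{def:part_estesa}, the $K-i+1$ knots $s_i,\dots,s_K$ are exactly the $\alpha+1$ copies of $x_p$ together with all $\sum_{\ell=p+1}^{q}(d_\ell-k_\ell)$ knots exceeding $x_p$, while the $i-1$ knots $t_1,\dots,t_{i-1}$ are exactly the $\sum_{\ell=1}^{r-1}(d_{\ell-1}-k_\ell)$ knots below $x_r$ together with the $\beta+1$ copies of $x_r$. Adding the two resulting equalities, using $K=d_0+1+\sum_{\ell=1}^q(d_\ell-k_\ell)$ and simplifying the sums, one gets $\alpha+\beta=d_{r-1}-1-\sum_{j=p+1}^{r-1}(d_j-k_j)$, which is precisely the displayed identity. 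Keeping the clamped ends and the \emph{zero-multiplicity} break-points straight in this bookkeeping --- in particular the cases $p=0$ and $r=q+1$ --- is the part I expect to cost the most care.

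Uniqueness then follows from the zero count \eqref{eq:zeros}. If $g_1,g_2\in V$ both satisfy \eqref{eq:cond_tf}, then $h\coloneqq g_1-g_2\in V$ vanishes to order at least $k_i^s+1$ at $x_p$ and at least $k_{i-1}^t+1$ at $x_r$, so $Z(h,[x_p,x_r])\ge(k_i^s+1)+(k_{i-1}^t+1)=\dim V$; but \eqref{eq:zeros} forbids a nonzero element of $V$ from having $\dim V$ or more zeros, hence $h\equiv0$ and $g_1=g_2$. Combined with the existence of the solution $f_i$, this shows that $f_i$ is the unique function on $[s_i,t_{i-1}]$ determined by \eqref{eq:cond_tf}, so that in practice $f_i$ can be computed simply by solving the finite linear system \eqref{eq:cond_tf}.
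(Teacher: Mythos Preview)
Your proposal is correct and follows essentially the same route as the paper: restrict to the subspace on $[s_i,t_{i-1}]$, verify by a knot-counting argument that the number of endpoint conditions equals its dimension, and then appeal to the zero bound \eqref{eq:zeros} for nonsingularity. The only noteworthy difference is in the last step: the paper phrases nonsingularity via the Schoenberg--Whitney interlacing conditions of \cite{BMuhl2003} (checking that each interpolation node lies in the support of the corresponding B-spline), whereas you argue directly that a nontrivial element of the homogeneous system would have at least $\dim V$ zeros, contradicting \eqref{eq:zeros}. Since the interlacing criterion is itself proved via the zero bound, the two arguments are equivalent; yours is a little more self-contained because it only uses facts already stated in the paper.
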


\begin{proof}
The proof consists in showing that conditions \eqref{eq:cond_tf} give rise to a linear system with a square, non-singular matrix.

We start by verifying that the number of endpoint conditions b) equals the dimension of the restriction of $S(\PPP_{\mathbf{d}},\KKK,\bDelta)$ to $[s_i,t_{i-1}]$, which we denote by $S_{[s_i,t_{i-1}]}(\PPP_{\mathbf{d}},\KKK,\bDelta)$, and thus the system matrix is square.
As a consequence of the definition of $\bDelta_s^*$ and $\bDelta_t^*$, the value of the index $i$ associated with $s_i$ and $t_{i-1}$ is given by
\[
i = d_0 + \sum_{j=1}^{ps_i-1}(d_j-k_{j}) + k_{i}^s - k_{ps_i} + 2
= \sum_{j=1}^{pt_{i-1}-1}(d_{j-1}-k_{j}) + d_{pt_{i-1}-1} - k_{i-1}^t + 1.
\]
The above identities yield
\[
k_i^s+1 = i - d_0 - \sum_{j=1}^{ps_i-1}(d_j-k_{j}) + k_{ps_i} -1, \]
and
\[k_{i-1}^t+1 = -i+ \sum_{j=1}^{pt_{i-1}-1}(d_{j-1}-k_{j}) + d_{pt_{i-1}-1} +2.
\]
Adding up the last two equalities we get
\begin{equation*}
\begin{aligned}
k_i^s+1 + k_{i-1}^t+1 =& - \sum_{j=1}^{ps_i-1}(d_{j-1}-k_{j}) - d_{ps_i-1} + k_{ps_i} \\
&+ \sum_{j=1}^{pt_{i-1}-1}(d_{j-1}-k_{j}) + d_{pt_{i-1}-1}+1\\
=&\sum_{j=ps_i}^{pt_{i-1}-1}(d_{j-1}-k_{j}) - d_{ps_i-1} + k_{ps_i} + d_{pt_{i-1}-1}+1\\
=&\sum_{j=ps_i+1}^{pt_{i-1}-1}(d_{j-1}-k_{j}) + d_{pt_{i-1}-1} +1,
\end{aligned}
\end{equation*}
which proves that the number of endpoint conditions equals the dimension of $S_{[s_i,t_{i-1}]}(\PPP_{\mathbf{d}},\KKK,\bDelta)$.
\\
According to \cite{BMuhl2003}, which generalizes the classical results on spline interpolation to splines with sections of different dimensions,
to guarantee that the system matrix is nonsingular, we shall verify that the interpolation nodes $x_{ps_i}$ and $x_{pt_{i-1}}$ and the knots of $S_{[s_i,t_{i-1}]}(\PPP_{\mathbf{d}},\KKK,\bDelta)$ satisfy proper interlacing conditions and use the zero bound \eqref{eq:zeros}.
Denoted by $K_i$ the dimension of $S_{[s_i,t_{i-1}]}(\PPP_{\mathbf{d}},\KKK,\bDelta)$, by $I_j$, $j=1,\ldots,K_i$, the support of the $j$th B-spline function in $S_{[s_i,t_{i-1}]}(\PPP_{\mathbf{d}},\KKK,\bDelta)$ and by
$\tau_j$, $j=1,\ldots,K_i$, the nodes, the interlacing conditions amount to requiring that $\tau_j \in I_j$, $j=1,\ldots,K_i$ \cite{BMuhl2003}.
In our situation we have $\tau_j=x_{ps_i}$, $j=1,\ldots,k_i^s+1$, and $\tau_j=x_{pt_{i-1}}$, $j=K_i-k_{i-1}^t,\ldots,K_i$.
Since by construction $k_i^s \leq d_{ps_i}$ and $k_{i-1}^t \leq d_{pt_{i-1}-1}$, the interlacing conditions are always satisfied.
In fact $x_{ps_i}$ belongs to the supports $I_j$, $j=1,\ldots,d_{ps_i}+1$, while
$x_{pt_{i-1}}$ belongs to $I_j$, $j=K_i-d_{pt_{i-1}-1},\dots,K_i$.
\end{proof}

In the remainder of this section we delve on practical aspects concerned with the calculation of the transition functions.
For computational purposes it is convenient to rely on the Bernstein bases associated with the individual section spaces.
Namely, let $\{B_{h,d_j}\}_{h=0}^{d_j}$ be the degree-$d_j$ Bernstein basis on $[x_j,x_{j+1}]$.
Then the transition function $f_i$, which is nontrivial in $[s_i,t_{i-1}]$, can be expressed as follows
\begin{equation}\label{eq:expansion}
f_{i}(x)=\sum_{h=0}^{d_j} b_{i,j,h}B_{h,d_j}(x), \quad x\in [x_j,x_{j+1}], \quad  j=ps_i,\dots,pt_{i-1}-1,
\end{equation}
where $b_{i,j,h}$, $h=0,\dots,d_j$, are the coefficients of the
local expansions of $f_{i}$ on $[x_j,x_{j+1}]$.

According to the first and last row of \eqref{eq:cond_tf}, the first $k_i^s+1$ coefficients of the local expansion of the first piece of $f_i$ will be $0$,
while, the last $k_{i-1}^t+1$ coefficients of the local expansion of the last piece of $f_i$ will be equal to $1$.
These conditions fully determine those transition functions which are nontrivial on one interval only.
As for the others, the undetermined coefficients in \eqref{eq:expansion} can be computed by solving the linear system
given by the second row of \eqref{eq:cond_tf}.

Having expressed the transition functions in the local Bernstein bases, we can easily compute their integrals and derivatives by means of the standard relations
\[
D B_{h,d_j}(x)=\frac{d_j}{x_{j+1}-x_j}\left(B_{h-1,d_j-1}(x)-B_{h,d_j-1}(x)\right)\qquad \mathrm{and} \qquad
\int_{x_j}^{x_{j+1}} B_{h,d_j}(x) \, dx = \frac{x_{j+1}-x_j}{d_j+1}, \quad h=0,\dots,d_j.
\]
Moreover, in view of \eqref{eq:N}, application of the above formul\ae \ promptly allows for computing derivatives and integrals of the B-spline basis.

Finally we remark that the proposed approach automatically yields the relation between the B-spline basis
and the local Bernstein bases. More precisely,
the restriction of any spline $f\in S(\PPP_{\mathbf{d}},\KKK,\bDelta)$  to the interval $[x_j,x_{j+1}]$, $s_\ell \leq x_j < \min\left( s_{\ell+1},b\right)$, can be written
using relations \eqref{eq:spline_local}, \eqref{eq:N} and \eqref{eq:expansion} as follows:
\[f(x)= \sum_{i=\ell-d_j}^{\ell} c_i\,N_{i,m}(x) = \sum_{i=\ell-d_{j}}^{\ell} c_i\,\left(f_{i}(x)-f_{i+1}(x)\right) = \sum_{h=0}^{d_j}\tilde{b}_{h,j} B_{h,d_j}(x),\]
where
\[
\tilde{b}_{h,j}\coloneqq \sum_{i=\ell-d_{j}}^{\ell} c_i\left(b_{i,j,h}-b_{i+1,j,h}\right).
\]
Therefore the $\tilde{b}_{h,j}$'s are the coefficients of the local expansion of  $f$ in the Bernstein basis of degree $d_j$ over $[x_j,x_{j+1}]$.

\section{Modeling tools}
\label{sec:modeling_tools}

\subsection{Knot insertion}
\label{sec:knot_ins}
In this section we discuss how knot insertion can simply be performed working with the transition functions.
In particular we will see that, when a MD-spline space is obtained from another by insertion of one knot,
the coefficients relating the B-spline bases of the two spaces can straightforwardly be determined by means of transition functions.
From the properties of the transition functions, it also follows that the knot-insertion coefficients are positive (see Proposition \ref{prop:ki_f}), which has a number of important consequences, namely total positivity of the B-spline basis, variation diminution and the existence of corner cutting algorithms for the constructed MD-splines. These properties can be proved replicating the same outline of their conventional spline counterpart \cite{dBDV1985,LR1983}.

Note that inserting a knot in $\bDelta_s^*$ entails that a knot is also inserted in $\bDelta_t^*$ at the same location and viceversa.
We will thus adopt the convention that \qtext{knot insertion} is intended as insertion of a knot in  $\bDelta_s^*$, bearing in mind that we could analogously reason in terms of $\bDelta_t^*$.
In particular, let $S(\PPP_{\mathbf{d}},\KKK,\bDelta)$ be a MD-spline space with associated left extended partition $\bDelta^*_s=\{s_i\}_{i=1}^K$ and let us insert one knot $\hat{s}$ in $\bDelta^*_s$, $s_{\ell} \leq \hat{s} < \min(s_{\ell+1},b)$.
Knot insertion yields a new left extended partition $\hat{\bDelta}^*_s=\{\hat{s}_i\}_{i=1}^{K+1}$ and a new spline space
$S(\PPP_{\hat{\mathbf{d}}},\hat{\KKK},\hat{\bDelta})$ such that
$S(\PPP_{\mathbf{d}},\KKK,\bDelta) \subset S(\PPP_{\hat{\mathbf{d}}},\hat{\KKK},\hat{\bDelta})$.
If $x_j<\hat{s}<x_{j+1}$ (being $x_j$ the break-points of $S(\PPP_{\mathbf{d}},\KKK,\bDelta)$), then we shall assume that the interval $[x_j,x_{j+1}]$ is divided in two subintervals of degree $d_j$ and thus, in the new space, a spline will be continuous of order $d_j-1$ at $\hat{s}$.
The following proposition provides an explicit expression for the knot insertion coefficients.

\begin{prop}\label{prop:ki_f}
Let $S(\PPP_{\mathbf{d}},\KKK,\bDelta)$  and $S(\PPP_{\hat{\mathbf{d}}},\hat{\KKK},\hat{\bDelta})$ be MD-spline spaces with left extended partitions $\bDelta^*_s=\{s_i\}_{i=1}^K$ and $\hat{\bDelta}^*_s=\{\hat{s}_i\}_{i=1}^{K+1}$ respectively. Let $S(\PPP_{\hat{\mathbf{d}}},\hat{\KKK},\hat{\bDelta})$ be obtained from $S(\PPP_{\mathbf{d}},\KKK,\bDelta)$ by insertion of a knot
$\hat{s}$ in $\bDelta^*_s$, $s_{\ell} \leq \hat{s} < \min(s_{\ell+1},b)$.
Denoting by $f_{i}$ and $\hat{f}_{i}$ the transition functions of the two spline spaces, there exist coefficients 
$0\leq \alpha_{i} \leq 1$, $i=1,\dots,K$, such that
\begin{equation}\label{eq:ki_f}
f_{i} = \alpha_{i}\,\hat{f}_{i} + (1-\alpha_{i})\,\hat{f}_{i+1}, \quad i=1,\dots,K.
\end{equation}
In particular,
\begin{equation}\label{eq:ki_coef}
\alpha_{i}=\left\{
\begin{array}{ll}
1, & i \leq \ell-d_j, \quad \mathrm{with} \quad x_j \leq \hat{s} < x_{j+1},\\
{\displaystyle\frac{D_+^{k_{i}^s+1}f_{i}({s_{i}})}{D_+^{k_{i}^s+1}\hat{f}_{i}({s_{i}})}}, & \ell-d_j+1 \leq i \leq \ell-r+1, \\
0, & i \geq \ell-r+2, \\
\end{array} \right.
\end{equation}
where $k_{i}^s$ is defined in \eqref{eq:KisKit} and $r$ is the multiplicity of $\hat{s}$ in $\hat{\bDelta}^*_s$.
\end{prop}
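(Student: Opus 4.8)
The plan is to identify each coarse transition function $f_i$ --- which, since $S(\PPP_{\mathbf{d}},\KKK,\bDelta)\subset S(\PPP_{\hat{\mathbf{d}}},\hat{\KKK},\hat{\bDelta})$, already belongs to the refined space --- with a combination of exactly two consecutive refined transition functions, pinning the combination down via the uniqueness result of Proposition~\ref{prop:unique}. The first step is bookkeeping on the two extended partitions: inserting $\hat{s}$ into $\bDelta_s^*$ (and simultaneously into $\bDelta_t^*$) leaves $\hat{s}_i=s_i$ and $\hat{t}_i=t_i$ for indices below the insertion slot and shifts the remaining ones up by one, and from this one reads off that $\hat{s}\in(s_i,t_{i-1})$ precisely when $\ell-d_j+1\leq i\leq\ell-r+1$, with $x_j\leq\hat{s}<x_{j+1}$. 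Outside this range the claim is immediate: for $i\leq\ell-d_j$ the support $[s_i,t_{i-1}]$ of $f_i$ does not contain $\hat{s}$ in its interior and the break-point data of the two spaces coincide there, so $f_i$ and $\hat{f}_i$ solve the very same interpolation problem \eqref{eq:cond_tf}, whence $f_i=\hat{f}_i$ by Proposition~\ref{prop:unique}, i.e. $\alpha_i=1$; symmetrically, for $i\geq\ell-r+2$ one obtains $f_i=\hat{f}_{i+1}$, i.e. $\alpha_i=0$.

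For the affected indices $\ell-d_j+1\leq i\leq\ell-r+1$ I would set $g_i\coloneqq\alpha_i\hat{f}_i+(1-\alpha_i)\hat{f}_{i+1}$, with $\alpha_i$ not yet specified, and check the hypotheses of Proposition~\ref{prop:unique} for $g_i$ in $S(\PPP_{\mathbf{d}},\KKK,\bDelta)$. The bookkeeping gives $\hat{s}_i=s_i\leq\hat{s}_{i+1}$ and $\hat{t}_{i-1}\leq\hat{t}_i=t_{i-1}$, so $g_i\equiv0$ to the left of $s_i$ and, to the right of $t_{i-1}$, where $\hat{f}_i\equiv\hat{f}_{i+1}\equiv1$, one has $g_i\equiv\alpha_i+(1-\alpha_i)=1$ (which is also why the two coefficients automatically sum to one). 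Moreover, in this range the insertion alters neither the multiplicity at $s_i$ nor that at $t_{i-1}$, so $\hat{k}_i^s=k_i^s$ and $\hat{k}_i^t=k_{i-1}^t$, and the end-point vanishing orders (property (b)) of $\hat{f}_i$ and $\hat{f}_{i+1}$ then make all the boundary relations of \eqref{eq:cond_tf} hold for $g_i$ irrespective of the value of $\alpha_i$. The interior continuity conditions in \eqref{eq:cond_tf} at break-points other than $\hat{s}$ are inherited from $g_i\in S(\PPP_{\hat{\mathbf{d}}},\hat{\KKK},\hat{\bDelta})$, so the only genuine requirement is that $g_i$ be smooth enough at $\hat{s}$ to lie in $S(\PPP_{\mathbf{d}},\KKK,\bDelta)$, i.e. that the jump of its critical derivative at $\hat{s}$ vanish (the $d_j$-th derivative when $\hat{s}$ is a new break-point interior to $(x_j,x_{j+1})$, and in general the derivative whose continuity the insertion lowers). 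This is a single linear equation in $\alpha_i$; solving it fixes $\alpha_i$ uniquely, and Proposition~\ref{prop:unique} then delivers $g_i=f_i$.

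It remains to recover the closed form \eqref{eq:ki_coef} and the bound $0\leq\alpha_i\leq1$. Applying $D_+^{k_i^s+1}$ at $s_i$ to $f_i=\alpha_i\hat{f}_i+(1-\alpha_i)\hat{f}_{i+1}$ annihilates the $\hat{f}_{i+1}$-term, which vanishes at $s_i$ to order strictly larger than $k_i^s+1$ (either $\hat{s}_{i+1}>s_i$, or in the coincident-knot case $\hat{k}_{i+1}^s>k_i^s$), leaving $\alpha_i=D_+^{k_i^s+1}f_i(s_i)/D_+^{k_i^s+1}\hat{f}_i(s_i)$, which is exactly \eqref{eq:ki_coef}; both quantities are strictly positive by the Taylor sign relations \eqref{eq:Taylor} --- used in $S(\PPP_{\mathbf{d}},\KKK,\bDelta)$ for the numerator and in $S(\PPP_{\hat{\mathbf{d}}},\hat{\KKK},\hat{\bDelta})$ for the denominator --- so $\alpha_i>0$, and applying instead $D_-^{k_{i-1}^t+1}$ at $t_{i-1}$ to $1-f_i=\alpha_i(1-\hat{f}_i)+(1-\alpha_i)(1-\hat{f}_{i+1})$ and invoking \eqref{eq:Taylor} at the right end-point yields $1-\alpha_i>0$. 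Combined with $\alpha_i\in\{0,1\}$ in the two extreme regimes, this gives $0\leq\alpha_i\leq1$ throughout. I expect the genuine difficulty to be the ``smoothness at $\hat{s}$'' step: correctly pinning down the critical order of differentiation and verifying that the equation it imposes on $\alpha_i$ is precisely the ratio above; the rest is careful but routine index accounting on $\bDelta_s^*$ and $\bDelta_t^*$.
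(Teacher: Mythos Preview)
Your plan is correct and would work, but the paper takes a shorter, more direct route that bypasses precisely the step you flag as the genuine difficulty. Rather than building a candidate $g_i=\alpha_i\hat f_i+(1-\alpha_i)\hat f_{i+1}$, forcing it into the coarse space by solving a smoothness equation at $\hat s$, and then invoking Proposition~\ref{prop:unique}, the paper simply starts from the inclusion $f_i\in S(\PPP_{\hat{\mathbf d}},\hat{\KKK},\hat{\bDelta})$ and writes $f_i=\alpha_i\hat f_i+\beta_i\hat f_{i+1}$ directly: by the zero/one behaviour of transition functions to the left of $s_i$ and to the right of $t_{i-1}$, no other $\hat f_j$ can occur in the expansion. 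Differentiating $k_i^s+1$ times at $s_i$ yields the formula for $\alpha_i$; differentiating $k_{i-1}^t+1$ times at $t_{i-1}$ yields a symmetric formula for $\beta_i$; evaluating the identity at $t_{i-1}$ (where all three functions equal $1$) gives $\alpha_i+\beta_i=1$; and \eqref{eq:Taylor} gives $\alpha_i,\beta_i\geq0$, exactly as in your last paragraph.

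The payoff of the paper's approach is that the smoothness condition at $\hat s$ never enters: existence of the two-term representation is immediate from the inclusion of spaces, so there is no linear equation at $\hat s$ whose nondegeneracy must be checked. Your route through Proposition~\ref{prop:unique} is sound---the smoothness equation \emph{is} nondegenerate, essentially because $\hat f_i-\hat f_{i+1}=\hat N_{i,m}$ has $\hat s$ in its support and hence does not lie in the coarse space---but establishing this takes extra work that the direct expansion avoids. Your treatment of the two extreme ranges $i\leq\ell-d_j$ and $i\geq\ell-r+2$ via uniqueness is perfectly fine and arguably cleaner than just reading off $\alpha_i\in\{0,1\}$ from the derivative ratios.
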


\begin{proof}
Recalling that $f_{i}$ is identically zero to the left of $s_i$ and identically one to the right of $t_{i-1}$,
it is immediately seen that $f_{i}$ can be a combination of $\hat{f}_{i}$ and $\hat{f}_{i+1}$ only, namely
\begin{equation}\label{eq:ki_gen}
f_{i}=\alpha_{i} \hat{f}_{i}+\beta_{i}\hat{f}_{i+1}, \quad i=1,\ldots,K.
\end{equation}

In view of \eqref{eq:Taylor}, the coefficients $\alpha_{i}$ are obtained by differentiating \eqref{eq:ki_gen} $k_{i}^s+1$ times
and evaluating at $s_i$, which yields \eqref{eq:ki_coef}.
Similarly, differentiating \eqref{eq:ki_gen} $k_{i-1}^t+1$ times
and evaluating at $t_{i-1}$ we get
\begin{equation}\label{eq:ki_coef2}
\beta_{i}=\left\{
\begin{array}{ll}
0, & i \leq \ell-d_j, \quad \textrm{with} \quad x_j \leq \hat{s} < x_{j+1}, \\
{\displaystyle\frac{D_-^{k_{i-1}^t+1}f_{i}({t_{i-1}})}{D_-^{k_{i-1}^t+1}\hat{f}_{i+1}({t_{i-1}})}}, & \ell-d_j+1 \leq i \leq \ell-r+1, \\
1, & i \geq \ell-r+2,
\end{array} \right.
\end{equation}
where $r$ is the multiplicity of $\hat{s}$ in $\hat{\bDelta}^*_s$.
Relations \eqref{eq:ki_coef} and \eqref{eq:ki_coef2} together with \eqref{eq:Taylor} show that $\alpha_{i},\beta_{i}\geq 0$.
In addition, for $\bar{x}=t_{i-1}$ we have
\[
f_i(\bar{x})=\hat{f}_i(\bar{x})=\hat{f}_{i+1}(\bar{x})=1,
\]
and thus, from \eqref{eq:ki_gen}, $\alpha_{i}+\beta_{i}=1$. The last two observations entail that $0\leq\alpha_{i}\leq1$.
\end{proof}

\begin{cor}\label{prop:ki_N}
Under the assumptions of Proposition \ref{prop:ki_f}, the B-spline bases $\{N_{i,m}\}_{i=1}^K$ and $\{\hat{N}_{i,m}\}_{i=1}^{K+1}$
are related through
\begin{equation}\label{eq:kiN}
N_{i,m} = \alpha_{i} \hat{N}_{i,m} + (1-\alpha_{i+1}) \hat{N}_{i+1,m}, \quad i=1,\dots,K,
\end{equation}
with coefficients $\alpha_{i}$ given by \eqref{eq:ki_coef}.
\end{cor}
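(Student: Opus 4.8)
The plan is to derive \eqref{eq:kiN} directly from the transition-function refinement identity \eqref{eq:ki_f} of Proposition~\ref{prop:ki_f}, combined with the telescoping relation \eqref{eq:N} between the B-spline basis and the transition functions. Since \eqref{eq:N} holds in both spline spaces, I would write $N_{i,m}=f_i-f_{i+1}$ and $\hat N_{j,m}=\hat f_j-\hat f_{j+1}$, substitute \eqref{eq:ki_f} for both $f_i$ and $f_{i+1}$, and then recognise the resulting expression as a combination of $\hat N_{i,m}$ and $\hat N_{i+1,m}$.

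Concretely, for $i=1,\dots,K$ one has, using \eqref{eq:ki_f} twice,
\[
N_{i,m}=f_i-f_{i+1}=\alpha_i\hat f_i+(1-\alpha_i-\alpha_{i+1})\hat f_{i+1}-(1-\alpha_{i+1})\hat f_{i+2},
\]
and the very same expression is obtained by expanding
\[
\alpha_i\hat N_{i,m}+(1-\alpha_{i+1})\hat N_{i+1,m}=\alpha_i(\hat f_i-\hat f_{i+1})+(1-\alpha_{i+1})(\hat f_{i+1}-\hat f_{i+2}).
\]
Comparing the two right-hand sides yields \eqref{eq:kiN} at once, with coefficients exactly the $\alpha_i$ of \eqref{eq:ki_coef} and no new quantities introduced.

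The only delicate point is the index bookkeeping at the boundary, since \eqref{eq:ki_f} is stated for $i=1,\dots,K$ whereas \eqref{eq:kiN} involves $\hat N_{K+1,m}$. At the right end one invokes the standing conventions $f_{K+1}\equiv 0$ and $\hat f_{K+2}\equiv 0$ (the refined space has $K+1$ transition functions) together with $\alpha_{K+1}:=0$; this last value is consistent with the third line of \eqref{eq:ki_coef}, because the hypothesis $s_\ell\le\hat s<\min(s_{\ell+1},b)$ forces $\ell\le K$ and hence $K+1\ge\ell-r+2$. With these conventions the computation above is valid for every $i$, and for $i=K$ it reduces to $N_{K,m}=f_K=\alpha_K\hat N_{K,m}+\hat N_{K+1,m}$. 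At the left end, $\alpha_1=1$ follows from $f_1\equiv\hat f_1\equiv 1$ and the nontriviality of $\hat f_2$, so that case fits the general formula as well. I do not anticipate any genuine obstacle here: once Proposition~\ref{prop:ki_f} is available, the corollary is a one-line algebraic manipulation, and the only care needed is checking the endpoint indices.
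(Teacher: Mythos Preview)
Your proposal is correct and follows essentially the same approach as the paper: both proofs write $N_{i,m}=f_i-f_{i+1}$, substitute the refinement identity \eqref{eq:ki_f} for each transition function, and regroup the resulting $\hat f$-terms into $\alpha_i\hat N_{i,m}+(1-\alpha_{i+1})\hat N_{i+1,m}$. Your additional discussion of the endpoint conventions ($\alpha_{K+1}=0$, $f_{K+1}\equiv 0$, $\hat f_{K+2}\equiv 0$) is more careful than the paper, which leaves those details implicit, but the underlying argument is the same one-line telescoping computation.
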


\begin{proof}
The statement follows from relation \eqref{eq:N} by writing
\begin{align}
N_{i,m} = f_{i}-f_{i+1} & =\alpha_{i} \hat{f}_{i} + (1-\alpha_{i}) \hat{f}_{i+1} -\alpha_{i+1}\hat{f}_{i+1} -(1-\alpha_{i+1}) \hat{f}_{i+2} \\
& =\alpha_{i}(\hat{f}_{i}-\hat{f}_{i+1}) + \hat{f}_{i+1} - \hat{f}_{i+2} -\alpha_{i+1}(\hat{f}_{i+1}-\hat{f}_{i+2}) \\
& =\alpha_{i} \hat{N}_{i,m} + (1-\alpha_{i+1}) \hat{N}_{i+1,m}.
\end{align}
\end{proof}

\begin{cor}
\label{prop:ki_spl}
Under the assumptions of Proposition \ref{prop:ki_f}, a MD-spline can be expressed as
$$
f(x) = \sum_{i=1}^{K} c_i\,N_{i,m}(x) = \sum_{i=1}^{K+1} \hat{c}_i\,\hat{N}_{i,m}(x), \quad x\in[a,b].
$$
where
\begin{equation}\label{eq:de_spl_coeff}
\hat{c}_i =
\begin{cases}
c_i, & i \leq \ell-d_j, \quad \mathrm{with} \quad x_j \leq \hat{s} < x_{j+1},\\
\alpha_{i}\,c_i + (1-\alpha_{i})\,c_{i-1}, &
\ell-d_j+1 \leq i \leq \ell-r+1,\\
c_{i-1}, & i \geq \ell-r+2,
\end{cases}
\end{equation}
with $\alpha_{i}$, $i=\ell-d_j+1,\dots,\ell-r+1$, as in \eqref{eq:ki_coef}.
\end{cor}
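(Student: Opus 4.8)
The plan is to substitute the B-spline refinement relation of Corollary~\ref{prop:ki_N} into the B-spline expansion of $f$ over $S(\PPP_{\mathbf{d}},\KKK,\bDelta)$, reindex, collect the coefficients of the refined basis, and use that $\{\hat N_{i,m}\}_{i=1}^{K+1}$ is a basis (hence linearly independent) to identify the $\hat c_i$. The explicit form \eqref{eq:de_spl_coeff} then follows by plugging in the values of the $\alpha_i$ given by \eqref{eq:ki_coef}.

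Concretely, I would start from $f=\sum_{i=1}^{K}c_i N_{i,m}$ and replace each $N_{i,m}$ using \eqref{eq:kiN}, obtaining
\[
f=\sum_{i=1}^{K}c_i\bigl(\alpha_i\hat N_{i,m}+(1-\alpha_{i+1})\hat N_{i+1,m}\bigr)=\sum_{i=1}^{K}\alpha_i c_i\,\hat N_{i,m}+\sum_{i=2}^{K+1}(1-\alpha_i)c_{i-1}\,\hat N_{i,m},
\]
after shifting the summation index in the second sum. Comparing with $f=\sum_{i=1}^{K+1}\hat c_i\hat N_{i,m}$ and invoking linear independence of the refined B-spline basis, one reads off $\hat c_i=\alpha_i c_i+(1-\alpha_i)c_{i-1}$ for $2\le i\le K$, together with the two boundary identities $\hat c_1=\alpha_1 c_1$ and $\hat c_{K+1}=(1-\alpha_{K+1})c_K$ (the latter uses the edge convention $\hat f_{K+2}\equiv0$, equivalently $\alpha_{K+1}=0$, already implicit in Corollary~\ref{prop:ki_N}).

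It then only remains to insert the explicit $\alpha_i$ of \eqref{eq:ki_coef}. For $i\le\ell-d_j$ one has $\alpha_i=1$, hence $\hat c_i=c_i$; in particular $\alpha_1=1$ in the clamped setting, so $\hat c_1=c_1$. For $\ell-d_j+1\le i\le\ell-r+1$ the relation $\hat c_i=\alpha_i c_i+(1-\alpha_i)c_{i-1}$ is already the claimed one. For $i\ge\ell-r+2$ one has $\alpha_i=0$, hence $\hat c_i=c_{i-1}$; since $K+1\ge\ell-r+2$, this also yields $\hat c_{K+1}=c_K$. Collecting the three cases gives \eqref{eq:de_spl_coeff}.

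The argument is essentially a one-line substitution followed by a change of summation index, so there is no real obstacle; the only point requiring care is the index bookkeeping at the two ends — i.e. checking that $\alpha_1=1$ and $\alpha_{K+1}=0$ (consistently with the first and third rows of \eqref{eq:ki_coef} and with the edge conventions $f_{K+1}\equiv0$, $\hat f_{K+2}\equiv0$) make the first and last rows of \eqref{eq:de_spl_coeff} fall out of the same generic formula, and that the three index ranges $i\le\ell-d_j$, $\ell-d_j+1\le i\le\ell-r+1$, $i\ge\ell-r+2$ partition $\{1,\dots,K+1\}$ as required.
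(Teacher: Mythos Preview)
Your argument is correct and is precisely the approach the paper takes: the paper simply states that the result ``follows immediately from Corollary~\ref{prop:ki_N} and equation~\eqref{eq:ki_gen}'', and your substitution--reindex--compare-coefficients computation is exactly the unpacking of that sentence. The endpoint bookkeeping you flag ($\alpha_1=1$, $\alpha_{K+1}=0$) is handled in the paper by the same case analysis built into \eqref{eq:ki_coef}.
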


\smallskip

The proof of Corollary \ref{prop:ki_spl} follows immediately from Corollary \ref{prop:ki_N} and equation \eqref{eq:ki_gen}.

\subsection{Degree elevation}
\label{sec:order_elev}
MD-splines allow for performing degree elevation locally, namely we can raise the degree of one (or more) section space(s) only, maintaining the other degrees unchanged. This local degree elevation represents a major and important difference with respect to conventional splines, where elevating the degree is a global operation.
Moreover, as we will illustrate by way of an example in Section \ref{sec:examples}, this feature allows for modeling parametric curves with the least number of control points.

Let $S(\PPP_{\mathbf{d}},\KKK,\bDelta)$ be a MD-spline space  having degree sequence $\mathbf{d}=(d_0,\dots,d_q)$ and dimension $K$.
It is sufficient to discuss the case where we want to elevate the degree on a single interval, say $[x_j,x_{j+1}]$, from $d_j$ to $d_{j}+1$.
Let  $S(\PPP_{\hat{\mathbf{d}}},\KKK,\bDelta)$ be the degree-elevated space, which will have dimension $K+1$.
As we will see, each B-spline basis function in $S(\PPP_{\mathbf{d}},\KKK,\bDelta)$ which is nonzero in $[x_j,x_{j+1}]$ will be expressed as a combination of two new ones. At the same time, all B-spline basis functions in $S(\PPP_{\mathbf{d}},\KKK,\bDelta)$ which are zero on $[x_j,x_{j+1}]$ carry over to the degree elevated space as well.
Thus degree elevation only requires to recompute a limited number of basis functions.

\begin{prop}\label{prop:de_basis}
Let $\bDelta_s^*$ and $\bDelta_t^*$ be the left and right extended knot partitions for $S(\PPP_{\mathbf{d}},\KKK,\bDelta)$.
By degree elevating the MD-spline space $S(\PPP_{\mathbf{d}},\KKK,\bDelta)$  in the interval $[x_j,x_{j+1}]$, that is $\hat {d}_j=d_j+1$,
we obtain the new space
$S(\PPP_{\hat{\mathbf{d}}},\KKK,\bDelta)$ and the new knot partitions $\hat{\bDelta}_s^*$ and $\hat{\bDelta}_t^*$.
Denoting by $f_{i}$, $i=1,\dots,K$, and $\hat{f}_{i}$, $i=1,\dots,K+1$, the respective transition functions, there exist
coefficients $\alpha_{i}$, with $0\leq \alpha_{i} \leq 1$, $i=1,\dots,K$, such that
\begin{equation}\label{eq:de_basis}
f_{i} = \alpha_{i}\,\hat{f}_{i} + (1-\alpha_{i})\,\hat{f}_{i+1}, \quad i=1,\dots,K.
\end{equation}
In particular,
\begin{equation}\label{eq:de_coef}
\alpha_{i}=\left\{
\begin{array}{ll}
1, & i \leq \ell-d_j, \\
{\displaystyle\frac{D_+^{k_{i}^s+1}f_{i}({s_{i}})}{D_+^{k_{i}^s+1}\hat{f}_{i}({s_{i}})}}, & \ell-d_j+1 \leq i \leq \ell, \\
0, & i \geq \ell+1,
\end{array} \right.
\end{equation}
where $k_{i}^s$ is defined in \eqref{eq:KisKit} and $\ell$ is such that $s_\ell \leq x_j< \min(s_{\ell+1},b)$.
\end{prop}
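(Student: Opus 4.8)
The plan is to mirror the proof of Proposition~\ref{prop:ki_f} almost verbatim, since elevating the degree on one interval plays, structurally, the same role as one knot insertion; the only genuinely new element is the bookkeeping of the two extended partitions. First I would note that raising $d_j$ to $d_j+1$ on $[x_j,x_{j+1}]$ leaves every smoothness constraint of Definition~\ref{def:QCS} admissible (the bounds on $k_j$ and $k_{j+1}$ can only relax), so $S(\PPP_{\mathbf{d}},\KKK,\bDelta)\subset S(\PPP_{\hat{\mathbf{d}}},\KKK,\bDelta)$ and each coarse transition function $f_i$ lies in the elevated space. Since the $\hat f_h$ are obtained from the elevated B-spline basis through the unitriangular relation $\hat f_h=\sum_{h'\ge h}\hat N_{h',m}$, the set $\{\hat f_h\}_{h=1}^{K+1}$ is itself a basis of the elevated space, so $f_i$ has a unique expansion in it.

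Next I would describe precisely how the partitions change: degree elevation on $[x_j,x_{j+1}]$ inserts one more copy of $x_j$ in $\bDelta_s^*$ (or lengthens the clamp at $a$ when $j=0$) and, symmetrically, one more copy of $x_{j+1}$ in $\bDelta_t^*$ (or the clamp at $b$ when $j=q$). With $\ell$ as in the statement this yields $\hat s_h=s_h$ for $h\le\ell$, $\hat s_{\ell+1}=x_j$, $\hat s_h=s_{h-1}$ for $h\ge\ell+2$; a direct check of \eqref{eq:KisKit} shows in addition that $\hat s_i=s_i$ and that the value of $k_i^s$ is the same in both spaces over the range of interest. From this one reads off the two extreme branches of \eqref{eq:de_coef}: for $i\le\ell-d_j$ one has $t_{i-1}\le x_j$ (because $N_{i-1,m}$ then vanishes on $[x_j,x_{j+1}]$ while starting no later than $x_j$), so the perturbed interval lies to the right of $\supp f_i=[s_i,t_{i-1}]$ and hence $\hat f_i\equiv f_i$, i.e.\ $\alpha_i=1$; symmetrically, for $i\ge\ell+1$ one has $s_i\ge s_{\ell+1}\ge x_{j+1}$, so the perturbation lies to the left of $\supp f_i$ and $\hat f_{i+1}\equiv f_i$, i.e.\ $\alpha_i=0$.

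For the transition range $\ell-d_j+1\le i\le\ell$ I would argue as for \eqref{eq:ki_gen}: since $f_i\equiv0$ on $[a,s_i]$ and $f_i\equiv1$ on $[t_{i-1},b]$, while the $\hat f_h$ satisfy $\hat f_1\ge\hat f_2\ge\cdots\ge0$ with $\hat f_h\equiv0$ on $[a,\hat s_h]$ and $\hat f_h\equiv1$ on $[\hat t_{h-1},b]$, the expansion of $f_i$ can involve only $\hat f_i$ and $\hat f_{i+1}$, say $f_i=\alpha_i\hat f_i+\beta_i\hat f_{i+1}$. Differentiating this identity $k_i^s+1$ times from the right at $s_i$, using $\hat s_i=s_i$ and the fact that $\hat f_{i+1}$ is either zero on a right-neighbourhood of $s_i$ or vanishes there to order $k_i^s+2$, isolates $\alpha_i=D_+^{k_i^s+1}f_i(s_i)/D_+^{k_i^s+1}\hat f_i(s_i)$, the middle branch of \eqref{eq:de_coef}; both terms are positive by \eqref{eq:Taylor}, so $\alpha_i>0$. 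The mirror computation at $t_{i-1}$, differentiating $k_{i-1}^t+1$ times from the left and using the second inequality in \eqref{eq:Taylor}, gives $\beta_i\ge0$; and evaluating $f_i=\alpha_i\hat f_i+\beta_i\hat f_{i+1}$ at $t_{i-1}$, where all three functions equal $1$, gives $\alpha_i+\beta_i=1$, whence $\beta_i=1-\alpha_i$ and $0<\alpha_i\le1$. Together with the extreme cases this establishes \eqref{eq:de_basis}--\eqref{eq:de_coef}.

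The step I expect to be the real obstacle is the bookkeeping in the second paragraph. Unlike knot insertion, degree elevation perturbs $\bDelta_s^*$ and $\bDelta_t^*$ at two \emph{different} break-points ($x_j$ and $x_{j+1}$ respectively), so the induced index shifts, the invariances $\hat s_i=s_i$ and ``$k_i^s$ unchanged'' on the transition range, and the support estimates $t_{i-1}\le x_j$ (resp.\ $s_i\ge x_{j+1}$) for the boundary indices have to be verified with care; one also has to treat separately the clamp situations $j=0$ and $j=q$ and the possibility of break-points of zero multiplicity in $\bDelta_s^*$ or $\bDelta_t^*$. Once these combinatorial facts are in place, the analytic part is a faithful replay of Proposition~\ref{prop:ki_f}.
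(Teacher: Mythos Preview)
Your proposal is correct and follows essentially the same route as the paper: the authors' proof simply observes that $f_i$ can only be a combination of $\hat f_i$ and $\hat f_{i+1}$ (else it could not have the right continuities at $s_i$ and $t_{i-1}$), then differentiates at the endpoints and refers back to the argument of Proposition~\ref{prop:ki_f} for the remaining steps. Your write-up is considerably more explicit about the partition bookkeeping than the paper's, which leaves those combinatorial checks implicit.
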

\begin{proof}
We start by observing that
\begin{equation}\label{eq:de_gen}
f_{i}=\alpha_{i} \hat{f}_{i}+\beta_{i}\hat{f}_{i+1}, \quad i=1,\ldots,K,
\end{equation}
as, if $f_{i}$ was a combination of other basis functions, it could not have at
$s_i$ and $t_{i-1}$ the right continuities.
 In view of \eqref{eq:Taylor}, the coefficients $\alpha_{i}$ can be obtained by differentiating $k_{i}^s+1$ times expression \eqref{eq:de_gen} and evaluating the result at $s_i$. Analogously, the coefficients $\beta_{i}$ can be obtained by differentiating $k_{i-1}^t+1$ times expression \eqref{eq:de_gen} and evaluating the result at $t_{i-1}$.
 Hence the statement follows from the same arguments in the proof of Poposition \ref{prop:ki_f}.
\end{proof}

\begin{cor}\label{prop:de_N}
Under the assumptions of Proposition \ref{prop:de_basis}, the B-spline bases $\{N_{i,m}\}_{i=1}^K$ of $S(\PPP_{\mathbf{d}},\KKK,\bDelta)$ and $\{\hat{N}_{i,\hat{m}}\}_{i=1}^{K+1}$ of $S(\PPP_{\hat{\mathbf{d}}},\KKK,\bDelta)$
are related through
\begin{equation}\label{eq:kiN}
N_{i,m} = \alpha_{i} \hat{N}_{i,\hat{m}} + (1-\alpha_{i+1}) \hat{N}_{i+1,\hat{m}},
\end{equation}
with coefficients $\alpha_{i}$ given in \eqref{eq:de_coef}.
\end{cor}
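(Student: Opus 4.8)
The plan is to argue exactly as in the proof of Corollary~\ref{prop:ki_N}: the statement is a purely formal consequence of the inversion formula~\eqref{eq:N} combined with the transition-function identity~\eqref{eq:de_basis} supplied by Proposition~\ref{prop:de_basis}. Two facts are needed. First, relation~\eqref{eq:N} holds in \emph{both} spaces: in $S(\PPP_{\mathbf{d}},\KKK,\bDelta)$ it reads $N_{i,m}=f_{i}-f_{i+1}$ (with $f_{K+1}\equiv 0$), and in the degree-elevated space $S(\PPP_{\hat{\mathbf{d}}},\KKK,\bDelta)$ it reads $\hat{N}_{i,\hat{m}}=\hat{f}_{i}-\hat{f}_{i+1}$ (with the analogous convention $\hat{f}_{K+2}\equiv 0$), since $\hat{f}_{i}$ and $\hat{N}_{i,\hat{m}}$ are related by Definition~\ref{def:trans_function} applied to that space. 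Second, Proposition~\ref{prop:de_basis} expresses every transition function of the coarser space as $f_{i}=\alpha_{i}\hat{f}_{i}+(1-\alpha_{i})\hat{f}_{i+1}$ with the $\alpha_{i}$ of~\eqref{eq:de_coef}, extended by $\alpha_{K+1}=0$ consistently with that formula.

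First I would substitute~\eqref{eq:de_basis} for $f_{i}$ and for $f_{i+1}$ into $N_{i,m}=f_{i}-f_{i+1}$, obtaining
\[
N_{i,m}=\alpha_{i}\hat{f}_{i}+(1-\alpha_{i})\hat{f}_{i+1}-\alpha_{i+1}\hat{f}_{i+1}-(1-\alpha_{i+1})\hat{f}_{i+2}.
\]
Next I would regroup the right-hand side to expose the two telescoping differences $\hat{f}_{i}-\hat{f}_{i+1}$ and $\hat{f}_{i+1}-\hat{f}_{i+2}$, namely
\[
N_{i,m}=\alpha_{i}\bigl(\hat{f}_{i}-\hat{f}_{i+1}\bigr)+\bigl(\hat{f}_{i+1}-\hat{f}_{i+2}\bigr)-\alpha_{i+1}\bigl(\hat{f}_{i+1}-\hat{f}_{i+2}\bigr),
\]
and finally rewrite each difference as a B-spline of the elevated space through~\eqref{eq:N}, which yields $N_{i,m}=\alpha_{i}\hat{N}_{i,\hat{m}}+(1-\alpha_{i+1})\hat{N}_{i+1,\hat{m}}$, the claimed identity. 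This is the whole argument: it uses no estimates, only the algebra above, and it reproduces verbatim the computation in the proof of Corollary~\ref{prop:ki_N}.

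What I would treat as the ``main obstacle'' — modest as it is — is checking the boundary indices so that the identity is literally correct for all $i=1,\dots,K$. For $i\leq\ell-d_j$, formula~\eqref{eq:de_coef} gives $\alpha_{i}=\alpha_{i+1}=1$, so the identity collapses to $N_{i,m}=\hat{N}_{i,\hat{m}}$, which correctly records that basis functions whose support avoids $[x_j,x_{j+1}]$ are unchanged; for $i\geq\ell+1$ one has $\alpha_{i}=\alpha_{i+1}=0$, giving $N_{i,m}=\hat{N}_{i+1,\hat{m}}$; and for $i=K$ one invokes the conventions $f_{K+1}\equiv\hat{f}_{K+2}\equiv 0$ and $\alpha_{K+1}=0$. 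As a consistency check one may verify that summing the resulting identities over $i$ is compatible with the partition of unity $\sum_{i}N_{i,m}=\sum_{i}\hat{N}_{i,\hat{m}}=1$ in the two spaces, although this is not logically needed.
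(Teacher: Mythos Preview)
Your proof is correct and follows exactly the same approach as the paper: the paper's proof of this corollary simply states that it ``follows the same outline of Corollary~\ref{prop:ki_N}'', and your computation reproduces that outline (substitute~\eqref{eq:de_basis} into $N_{i,m}=f_i-f_{i+1}$, regroup, and apply~\eqref{eq:N} in the elevated space) verbatim. The additional boundary-index checks you include are more explicit than what the paper records, but they are consistent with it.
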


\begin{proof}
The proof follows the same outline of Corollary \ref{prop:ki_N}.
\end{proof}

\begin{cor}\label{prop:de_spl}
Under the assumptions of Proposition \ref{prop:de_basis}, a MD-spline can be expressed as
$$
f(x) = \sum_{i=1}^{K} c_i\,N_{i,m}(x) = \sum_{i=1}^{K+1} \hat{c}_i\,\hat{N}_{i,\hat m}(x), \quad  x\in[a,b].
$$
where 
\begin{equation}\label{eq:de_spl_coeff}
\hat{c}_i =
\begin{cases}
c_i, & i \leq \ell-d_j,\\
\alpha_{i}\,c_i + (1-\alpha_{i})\,c_{i-1}, &
\ell-d_j+1 \leq i \leq \ell,\\
c_{i-1}, & i \geq \ell+1,
\end{cases}
\end{equation}
with $\alpha_{i}$, $i=\ell-d_j+1,\dots,\ell$, as in \eqref{eq:de_coef}.
\end{cor}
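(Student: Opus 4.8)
The plan is to mimic the short derivation already used for Corollary~\ref{prop:ki_spl}: substitute the change-of-basis formula of Corollary~\ref{prop:de_N} into the B-spline expansion of $f$ and collect coefficients. First I would note that, since degree elevation on $[x_j,x_{j+1}]$ enlarges the space, $S(\PPP_{\mathbf{d}},\KKK,\bDelta)\subset S(\PPP_{\hat{\mathbf{d}}},\KKK,\bDelta)$, the spline $f$ indeed admits an expansion $f=\sum_{i=1}^{K+1}\hat c_i\,\hat N_{i,\hat m}$; moreover $\{\hat N_{i,\hat m}\}_{i=1}^{K+1}$ is a basis, so these coefficients are unique and it suffices to exhibit one admissible choice.

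Next I would insert the relation of Corollary~\ref{prop:de_N} to write
\[
f=\sum_{i=1}^{K}c_i\,N_{i,m}
 =\sum_{i=1}^{K}c_i\bigl(\alpha_i\,\hat N_{i,\hat m}+(1-\alpha_{i+1})\,\hat N_{i+1,\hat m}\bigr),
\]
and then reindex the second sum (shifting $i\mapsto i-1$) so that both sums are expressed through the same basis function $\hat N_{i,\hat m}$. Collecting the coefficient of $\hat N_{i,\hat m}$ gives $\hat c_i=\alpha_i\,c_i+(1-\alpha_i)\,c_{i-1}$, $i=1,\dots,K+1$, where the boundary cases are handled by the conventions $c_0\coloneqq 0$ and $\alpha_{K+1}\coloneqq 0$, both consistent with \eqref{eq:de_coef} (there $\alpha_1=1$ whenever $\ell-d_j\geq 1$ and $\alpha_i=0$ for $i\geq\ell+1$).

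It then only remains to substitute the explicit values of $\alpha_i$ from \eqref{eq:de_coef}: for $i\leq\ell-d_j$ one has $\alpha_i=1$, hence $\hat c_i=c_i$; for $i\geq\ell+1$ one has $\alpha_i=0$, hence $\hat c_i=c_{i-1}$; and for $\ell-d_j+1\leq i\leq\ell$ the coefficient remains in the mixed form $\alpha_i\,c_i+(1-\alpha_i)\,c_{i-1}$, which is precisely \eqref{eq:de_spl_coeff}.

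The only point requiring a little care is the index bookkeeping at the two ends of the range: one must check that, after reindexing, the boundary terms of the two sums are correctly absorbed by the conventions above, and that the index intervals appearing in \eqref{eq:de_coef} align with those in \eqref{eq:de_spl_coeff}. This is routine and entirely parallel to the knot-insertion argument of Corollary~\ref{prop:ki_spl}; no new estimate is needed, and the bounds $0\leq\alpha_i\leq 1$ are inherited verbatim from Proposition~\ref{prop:de_basis}.
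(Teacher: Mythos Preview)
Your argument is correct and is precisely the computation the paper has in mind: substituting \eqref{eq:kiN} from Corollary~\ref{prop:de_N} into the expansion of $f$, reindexing, and then reading off the three cases from \eqref{eq:de_coef}. The paper's own proof is only the one-line remark that the result follows from Corollary~\ref{prop:de_N} (together with \eqref{eq:de_basis}), so you have simply written out in full the same derivation, including the harmless boundary conventions $c_0=0$, $\alpha_{K+1}=0$.
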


\smallskip

The proof of Corollary \ref{prop:de_spl} follows immediately from Corollary \ref{prop:de_N} and equation \eqref{eq:de_basis}.

\section{Geometrically continuous MD-splines}
\label{sec:geometric}
In this section we explore a possible extension of the proposed MD framework to the context of
geometrically continuous splines.
These splines allow for relaxing the strict requirement for parametric continuity and introduce more \qtext{degrees of freedom}, which can be used to intuitively modify the shape of parametric curves \cite{DynMicchelli1989}.
To the best of our knowledge, all instances of geometrically continuous splines appeared so far are featured by having section spaces all of the same degree or dimension.
Our main objective is to demonstrate how our approach based on transition functions is easily extendible and adaptable to construct geometrically continuous MD-splines.
To this end, we will content ourselves to introducing the basic idea postponing a thorough study on the subject to a future work. A numerical example will be provided in the next section.

In addition to the setting and notation adopted so far,
we now also associate with the elements of $\bDelta$
a sequence $\bM \coloneqq (M_1,\ldots,M_q)$ of \emph{connection matrices}, where $M_i$, $i=1,\dots,q$, is lower
triangular of order $k_i+1$, has positive diagonal entries and first row and column equal to $(1,0,\dots,0)$.
Hence \emph{geometrically continuous MD-splines} are functions which satisfy a modified version of Definition \ref{def:QCS},
where condition ii) is replaced by the following one:
\begin{itemize}
\item[ii)]
$\displaystyle{
M_i \left(D^0 p_{i-1}(x_i),D^1 p_{i-1}(x_i),\dots,D^{k_i} p_{i-1}(x_i)\right)^T =
\left(D^0 p_i(x_i),D^1 p_i(x_i),\dots,D^{k_i} p_i(x_i)\right)^T, \quad i=1,\dots,q.
}$
\end{itemize}
We will denote the spline space by $S(\PPP_{\mathbf{d}},\KKK,\bDelta,\bM)$.
It shall be observed that, when all matrices $M_i$ are the identity, the considered MD-splines are parametrically continuous
(and thus they fall into the framework of the previous sections).
In addition, the continuity of these splines is guaranteed by the requirement that the first row and column of each matrix $M_i$ be equal to the vector $(1,0,\dots,0)$.
Without delving into details, we recall that the introduction of the connection matrices does not alter the dimension of the spline space.

Transition functions for geometrically continuous spline spaces can be constructed by generalizing the procedure in Section \ref{sec:trans_fun}. More precisely, it is sufficient to require that each transition function $f_i$, $i=2,\dots,K$, satisfy a modified version of the parametric continuity conditions \eqref{eq:cond_tf}, obtained by replacing the second row in \eqref{eq:cond_tf} with:
\[M_j \left(D^0_- f_{i}(x_{j}),D^1_- f_{i}(x_{j}),\dots,D^{k_{j}}_- f_{i}(x_{j})\right)^T =
\left(D^0_+ f_{i}(x_{j}),D^1_+ f_{i}(x_{j}),\dots,D^{k_{j}}_+ f_{i}(x_{j})\right)^T,
\quad j=ps_i+1,\dots,pt_{i-1}-1.\]
Similarly as discussed in Section \ref{sec:trans_fun}, such a modified version of  \eqref{eq:cond_tf} yields a linear system which can be solved for determining the coefficients of the transition function $f_i$, for all $i=2,\dots,K$.
Moreover, knot insertion and degree elevation can be performed exploiting the transition functions, analogously as described in Section \ref{sec:modeling_tools}.

\section{Numerical examples}
\label{sec:examples}
In this section we present two numerical examples. The first demonstrate the use of the tools introduced in the previous sections,
thus highlighting the advantages and potentials of MD-splines for geometric modeling.
The second example is conceived to illustrate geometrically continuous MD-splines, which, to the best of our knowledge, have never considered in the previous proposals. These splines combine the benefits of the multi-degree framework and of geometric continuity.

\paragraph{Geometric modeling with MD-splines}
To illustrate the potential of MD-splines in geometric modeling, we consider a typical modeling section, where knot insertion and degree elevation are used in order to add details to a basic initial shape (Fig.\ \ref{fig:turtle}).
By means of MD-splines, the target curve can be represented relying on the lowest possible degree on each interval and thus using the minimum number of control points.

\begin{itemize}[leftmargin=*]
\item Fig.\ \ref{fig:TA} depicts a parametric curve from the MD-spline space in Example \ref{ex:running}. The control points are marked by circles (the first and last point are coincident and marked with a double circle), whereas the black dots identify the junction of two spline segments. We recall from Example \ref{ex:running} that $[a,b]=[0,7]$, $\bDelta =\{x_1,x_2,x_3\}=\{ 1, 3, 6\}$, $\mathbf{d} = (d_0,\dots,d_3)=(1,2,4,2)$ and $\mathcal{K} = (k_1,k_2,k_3)=(0,1,2)$, while we refer to Fig.\ \ref{fig:ex1_part} for the two extended partitions $\bDelta^*_s$ and $\bDelta^*_t$.

\item \emph{Knot insertion} -- In Fig.\ \ref{fig:TB} a knot is inserted in the partitions $\bDelta^*_s$ and $\bDelta^*_t$ at $2.6$, thus obtaining $\hat{\bDelta}^*_s=\{0,0,1,1,2.6,3, 3, 3\}$ and $\hat{\bDelta}^*_t=\{1,2.6,3,6,6,7,7,7\}$.
    Accordingly, the break-point sequence becomes $\hat{\bDelta}=\{1,2.6,3,6\}$. The interval $[x_1,x_2]$ in $\bDelta$ is split in two intervals of degree $2$ in $\hat{\bDelta}$, obtaining a new degree vector $\hat{\mathbf{d}}=(1,2,2,4,2)$. Finally, because a single knot is inserted between two degree-$2$ intervals, the continuity vector needs to be updated to $\hat{\mathcal{K}}=(0,1,1,2)$.
As a result of the knot-insertion procedure, the corresponding control polygon contains one additional point.
\item \emph{Iterated degree elevation} --
In the interval $[2.6,3]$ the degree elevation is performed three times (Fig.\ \ref{fig:TC}).
This does not change the break-point sequence $\hat{\bDelta}$  and the continuity $\hat{\mathcal{K}}$.
Since the spline degree is locally raised from 2 to 5 in the third interval, the degree vector is updated to  $\hat{\hat{\mathbf{d}}}=(1,2,5,4,2)$.
Moreover, because the continuity should not change, a suitable number of coincident knots must be added in the
left and right extended partitions, which yields $\hat{\hat{\bDelta}}^*_s=\{0,0,1,1,2.6,2.6,2.6,2.6,3, 3, 3\}$ and $\hat{\hat{\bDelta}}^*_t=\{1,2.6,3,3,3,3,6,6,7,7,7\}$. Overall three more points are added to the control polygon.

\item The control points gained through knot insertion and iterated degree elevation are used to model the turtle head in Fig.\ \ref{fig:TD}.

\item \emph{Conversion to Bézier form} -- The modelled curve is converted to Bézier form (Fig.\ \ref{fig:TBez}) as explained in the last paragraph of Section \ref{sec:trans_fun}.

\item \emph{Conversion to conventional B-spline form} -- By means of degree elevation, the curve is converted to a conventional spline of degree 5 (Fig.\ \ref{fig:TD5}). The obtained curve features 22 control points, while only 11 control points are needed to represent the same curve in MD-spline form (Fig. \ref{fig:TD}).
\end{itemize}

\paragraph{Geometric continuity}
To illustrate the benefit of modeling with geometrically continuous MD-splines, we consider the following example based on
a spline space $S(\PPP_{\mathbf{d}},\KKK,\bDelta,\bM)$ on $[a,b]=[0,3]$, with $\bDelta=\{0.75,1.75,2.5\}$, $\mathbf{d}=(3,4,3,1)$, $\KKK=(2,2,0)$. In addition,
in correspondence of $x_1$ and $x_2$, we set the connection matrices:
\begin{equation}
\label{eq:Mi}
M_1=
\begin{pmatrix}
1   &  0  &   0\\
0   &  \alpha  &    0\\
0   &  \beta  &  \gamma
\end{pmatrix},
\qquad
M_2=
\begin{pmatrix}
1   &  0  &   0\\
0    & 1/\alpha  &   0\\
0   &  \frac{\beta}{\alpha\gamma}  &   1/\gamma
\end{pmatrix}.
\end{equation}

\noindent
The entries of $M_2$ ensure that the symmetry in the data in Fig.\ \ref{fig:geom2} can be preserved for any choice of $\alpha,\beta$ and $\gamma$.
Additionally, we choose $\gamma=\beta^2$, in such a way that the curvature is continuous at $x_1$ and $x_2$ for any $\alpha$ and $\beta$.
Note that, being the curve $C^0$ only at $x_3$, we are forced to choose $M_3=1$.
In the curves displayed in the top row of Fig. \ref{fig:geom2}, the parameter $\alpha$ has the constant value $1$, whereas $\beta$ takes values $-5,0,10$ from left to right, respectively. For increasing values of $\beta$, a progressive deformation of the shape is clearly visible.
In the bottom row of Fig.\ \ref{fig:geom2}, instead, the parameter $\beta$ is fixed to $\beta=1$, whereas $\alpha$ varies in the range $\alpha=0.25,1,4$ from left to right, respectively, and the curve progressively changes its shape accordingly.
Fig.\ \ref{fig:geom2_bases} illustrates the basis functions corresponding to the curves in the bottom row of Fig.\ \ref{fig:geom2}.
Note that in $[x_0,x_3]=[0,2]$ the basis functions are symmetric about the center of the interval
and this also holds for the other bases, which are not displayed, and justifies the symmetric behavior of all the curves in Fig.\ \ref{fig:geom2}.
In addition, all the curves in the top row of Fig.\ \ref{fig:geom2}, characterized by $\alpha=1$, have parametric $C^1$ continuity, and the one in Fig.\ \ref{fig:gc2}
is even $C^2$ at $x_1$ and $x_2$.
Focusing on the bottom row, instead, we can see that the selected values of $\alpha$ and $\beta$ yield a $C^1$ curve in Fig.\
\ref{fig:gi2}, while the other two curves have geometrically continuous first and second derivatives at $x_1$ and $x_2$.

\begin{figure}
\centering
\subfigure[]{
{\includegraphics[height=0.19\textwidth]{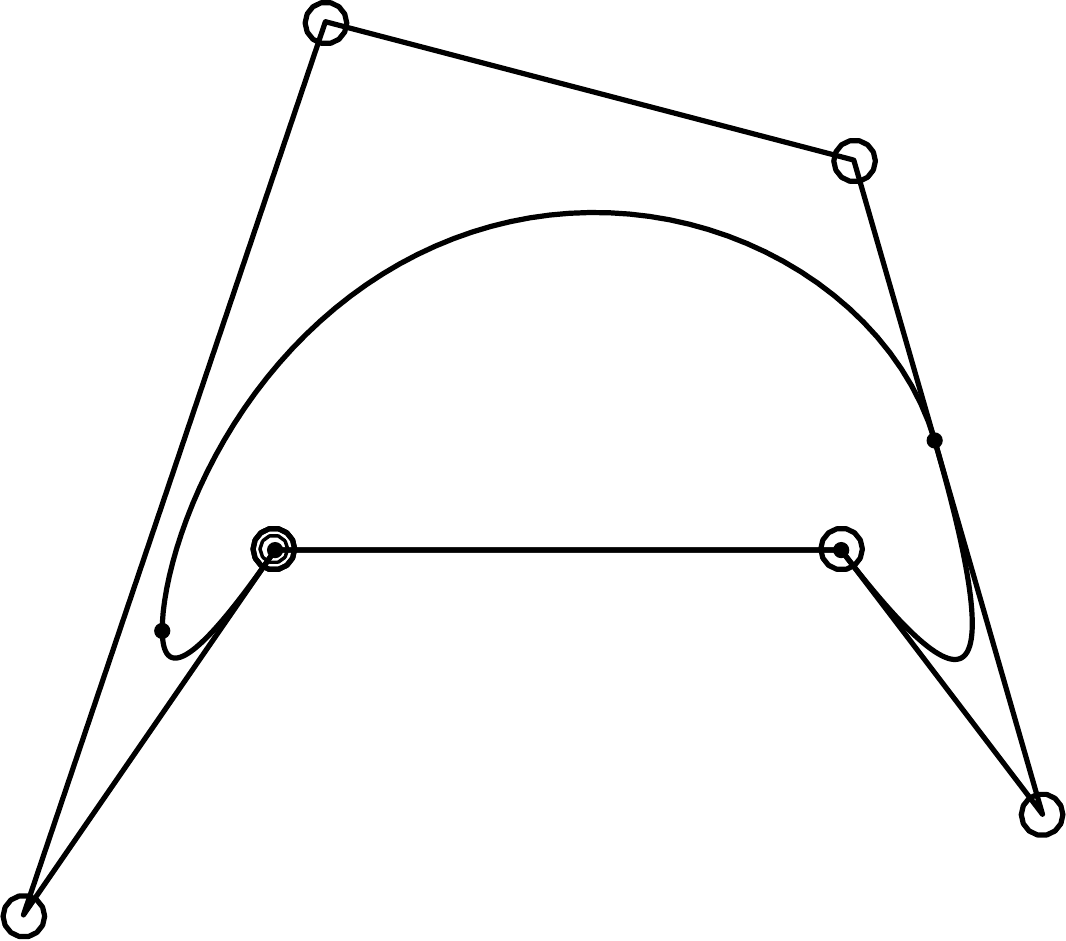}\label{fig:TA}}
}
\hfill
\subfigure[]{
{\includegraphics[height=0.19\textwidth]{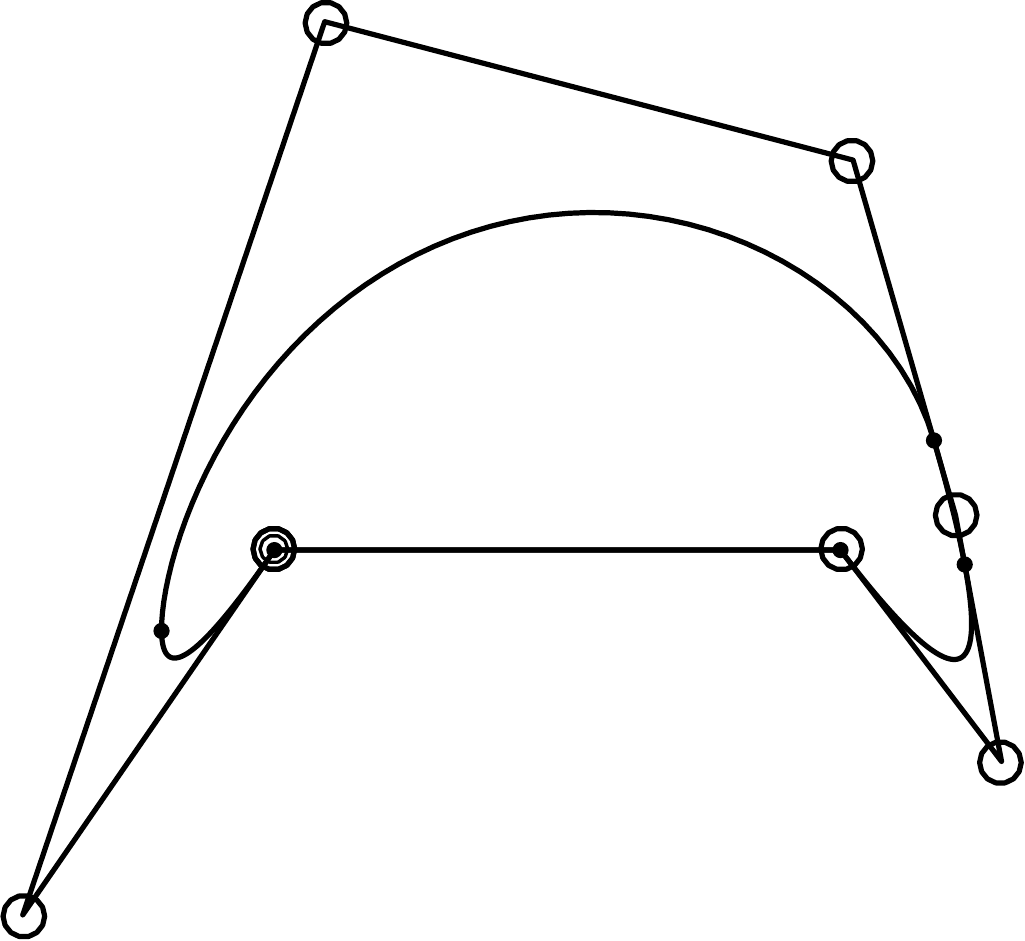}\label{fig:TB}}
}
\hfill
\subfigure[]{
{\includegraphics[height=0.19\textwidth]{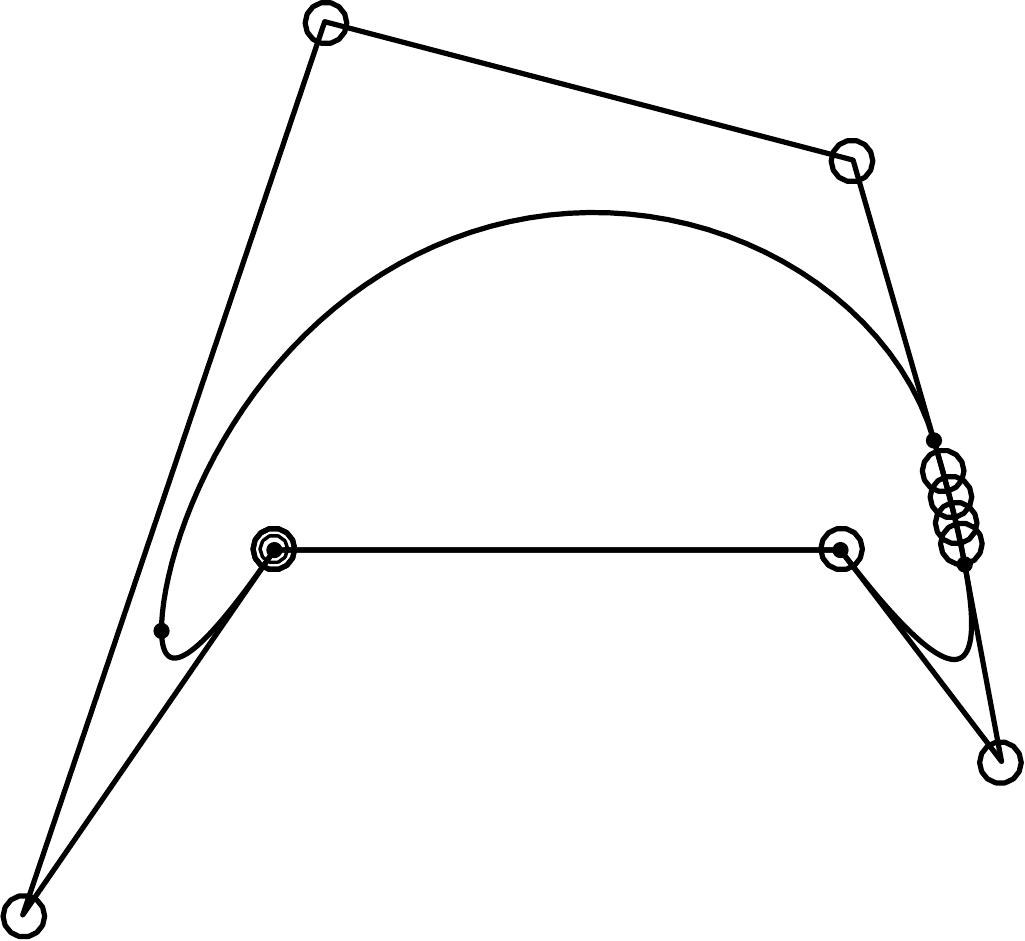}\label{fig:TC}}
}
\hfill
\subfigure[]{
{\includegraphics[height=0.19\textwidth]{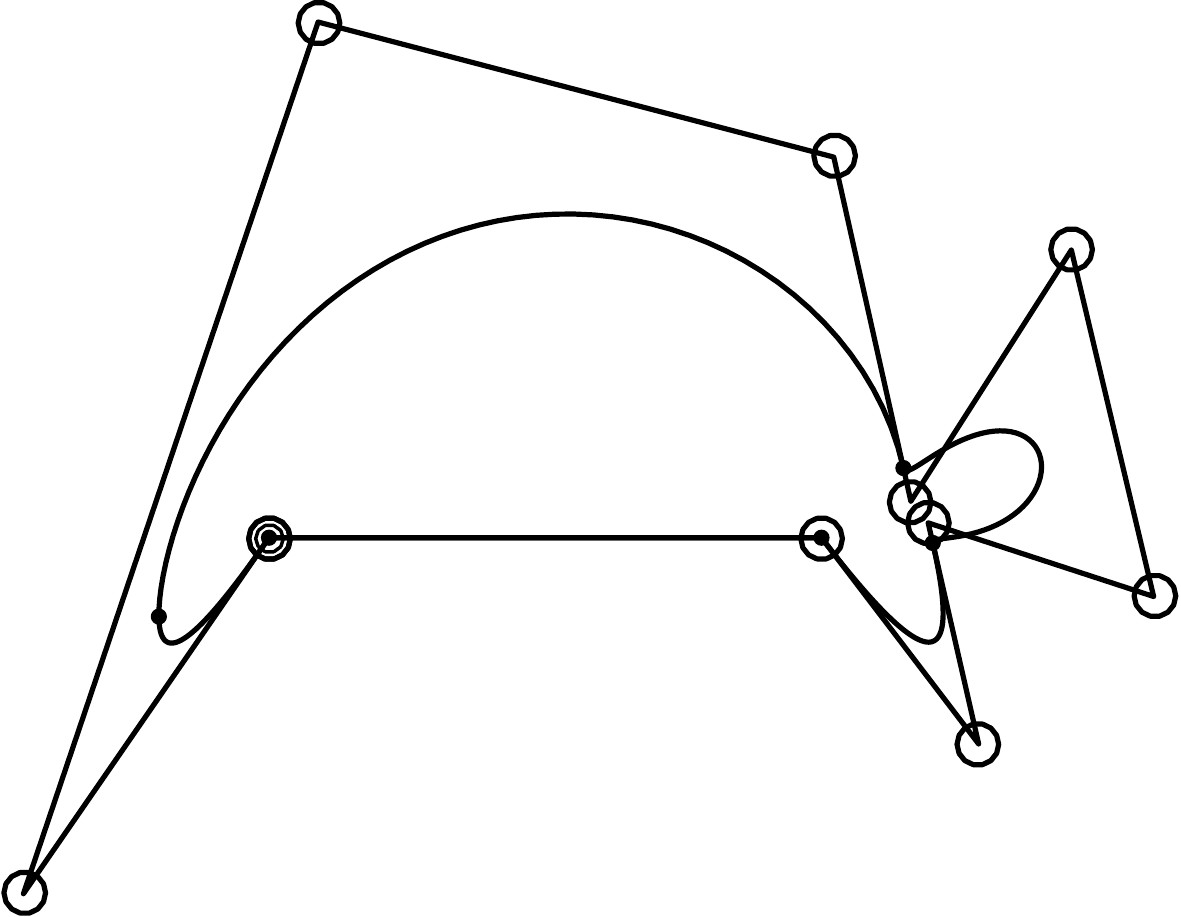}\label{fig:TD}}
}
\\
\subfigure[]{
{\raisebox{0.54cm}{\hphantom{\qquad\;\;\,}\includegraphics[height=0.165\textwidth]{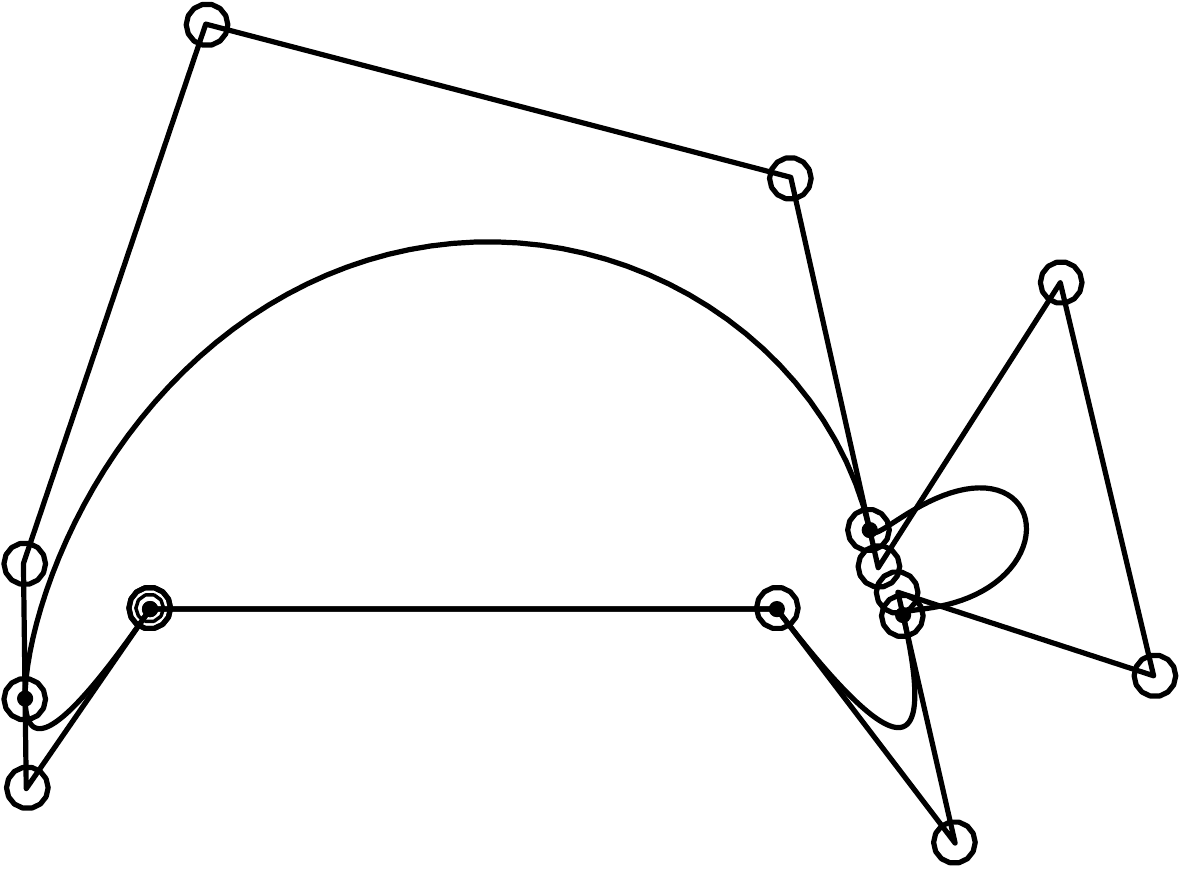}}\label{fig:TBez}}
}
\subfigure[]{
{\raisebox{0.84cm}{\hphantom{\qquad\;\;\,}\includegraphics[height=0.135\textwidth]{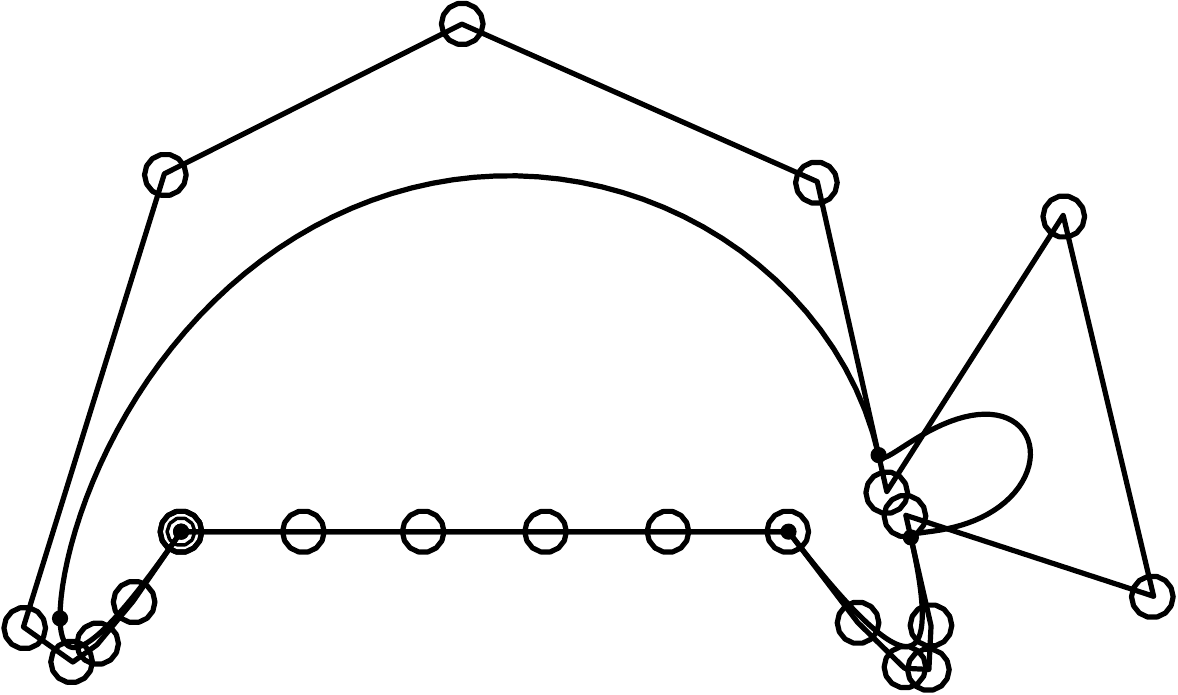}}\label{fig:TD5}}
}
\subfigure[]{
{\raisebox{0.96cm}{\hphantom{\qquad\;}\includegraphics[height=0.097\textwidth]{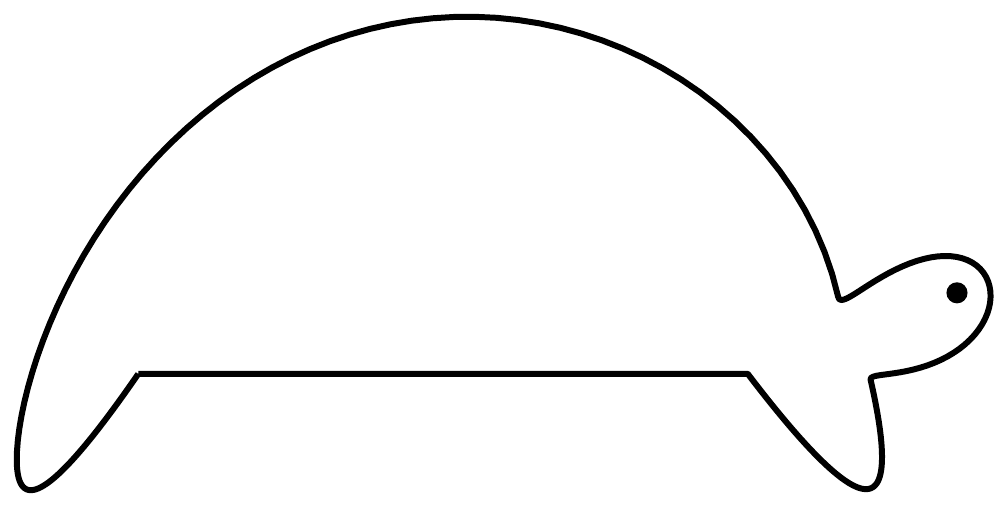}\label{fig:TDs}}}
}
\caption{Modeling with a MD-spline curve: \subref{fig:TA} parametric curve from the MD-spline space in Example \ref{ex:running}; \subref{fig:TB} curve after insertion of one knot and \subref{fig:TC} degree elevation; \subref{fig:TD} displacement of the obtained control points to model the turtle head; \subref{fig:TBez} conversion to Bézier form; \subref{fig:TD5} representation as a degree-$5$ spline.}
\label{fig:turtle}
\end{figure}

\begin{figure}
\centering
\begin{tabular}{ccc}
\subfigure[$\alpha=1$, $\beta=-5$]{
{\includegraphics[width=0.20\textwidth]{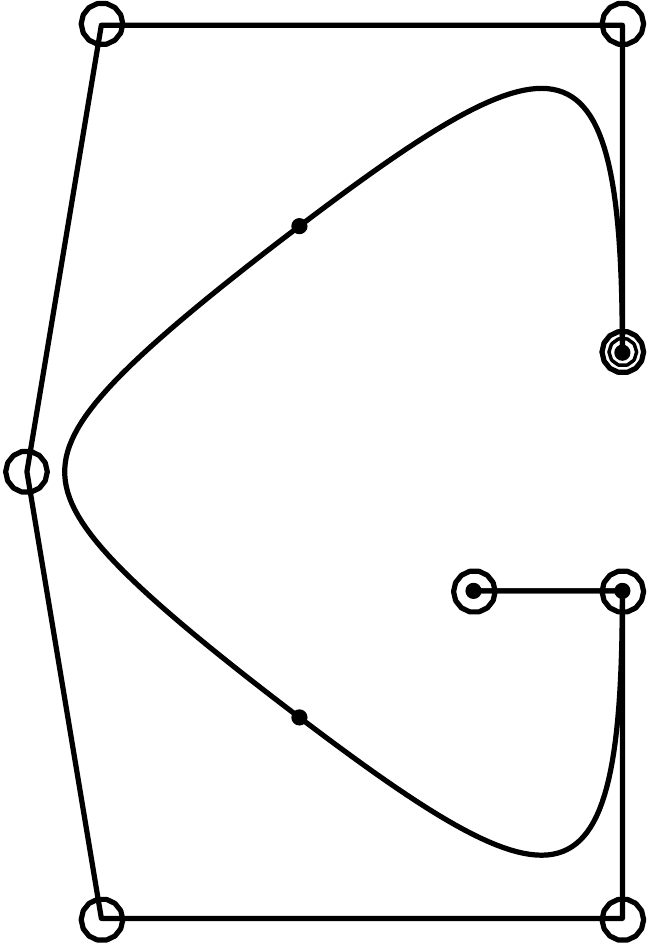}\label{fig:gb2}}
}\hspace{1.4cm}
&
\subfigure[$\alpha=1$, $\beta=0$]{
{\includegraphics[width=0.20\textwidth]{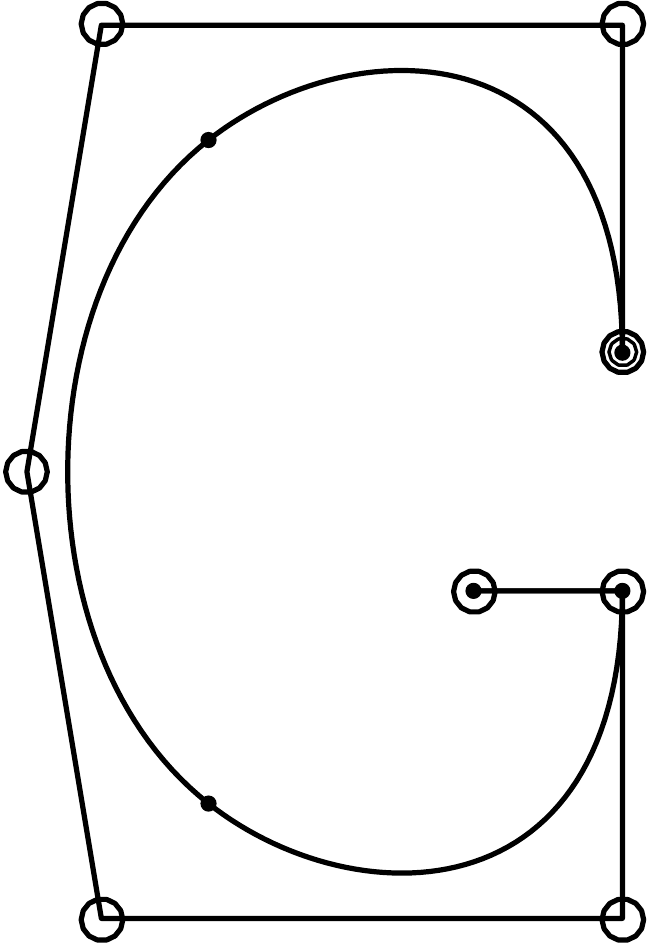}\label{fig:gc2}}
}\hspace{1.4cm}
&
\subfigure[$\alpha=1$, $\beta=10$]{
{\includegraphics[width=0.20\textwidth]{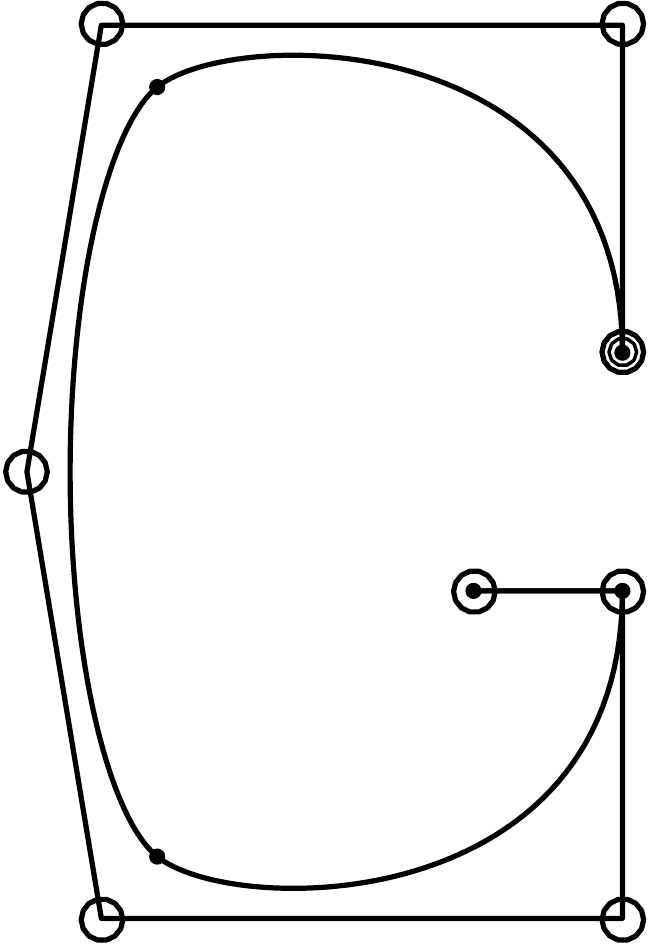}\label{fig:gd2}}
}\\
\subfigure[$\alpha=0.25$, $\beta=1$]{
{\includegraphics[width=0.2\textwidth]{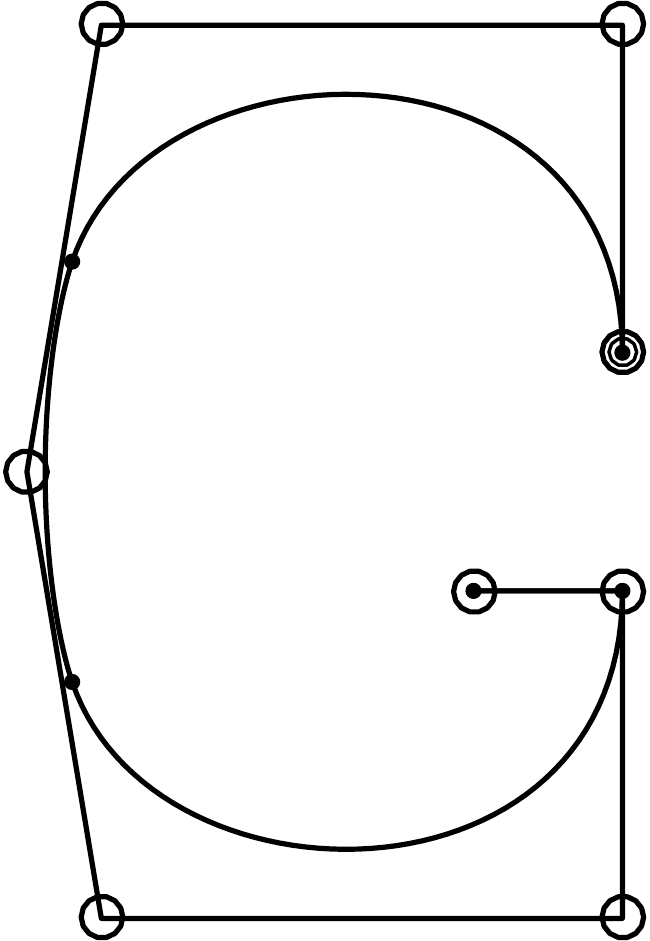}\label{fig:gh2}}
}\hspace{1.4cm}
&
\subfigure[$\alpha=1$, $\beta=1$]{
{\includegraphics[width=0.2\textwidth]{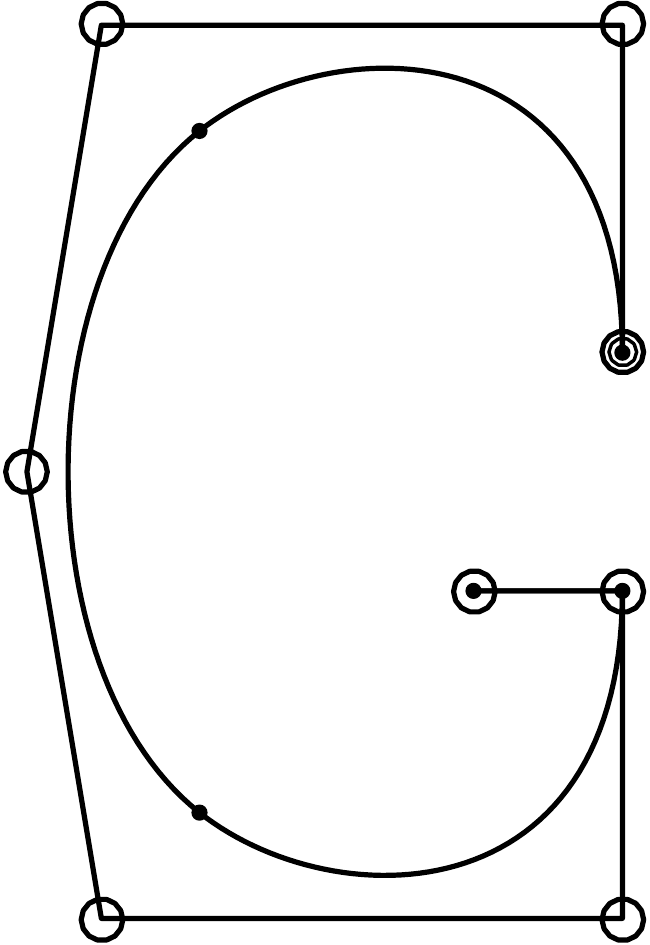}\label{fig:gi2}}
}\hspace{1.4cm}
&
\subfigure[$\alpha=4$, $\beta=1$]{
{\includegraphics[width=0.2\textwidth]{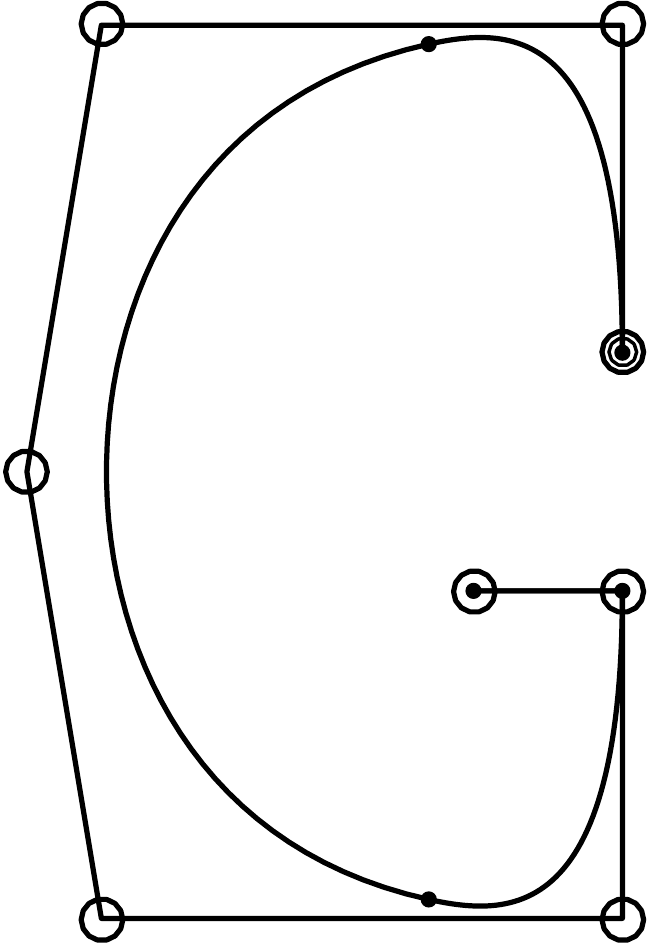}\label{fig:gl2}}
}
\end{tabular}
\caption{Example of geometric continuity.}
\label{fig:geom2}
\end{figure}

\begin{figure}
\centering
\subfigure[$\alpha=0.25$, $\beta=1$]{
{\includegraphics[width=0.3\textwidth]{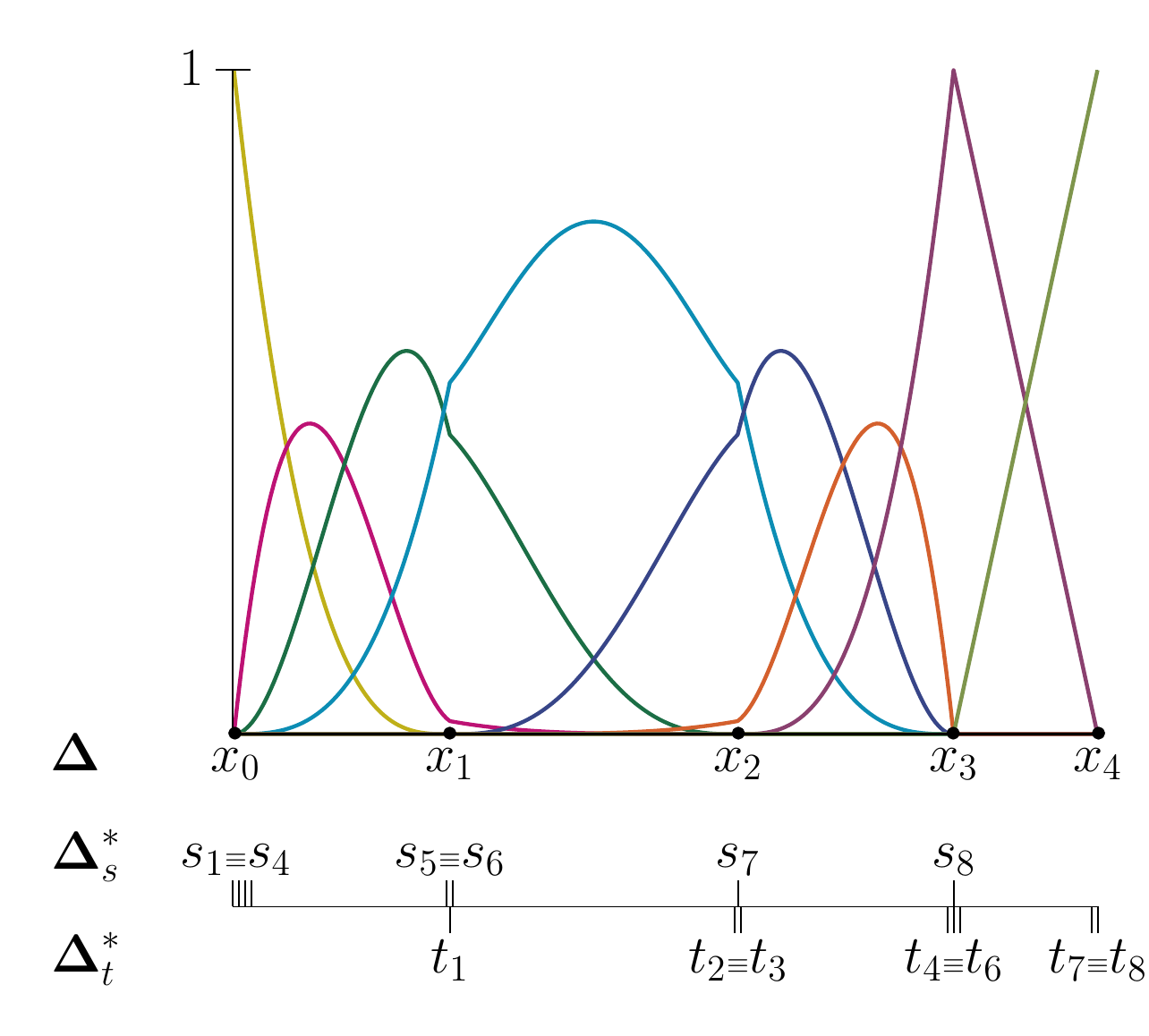}\label{fig:gh2_basis}}
}
\subfigure[$\alpha=1$, $\beta=1$]{
{\includegraphics[width=0.3\textwidth]{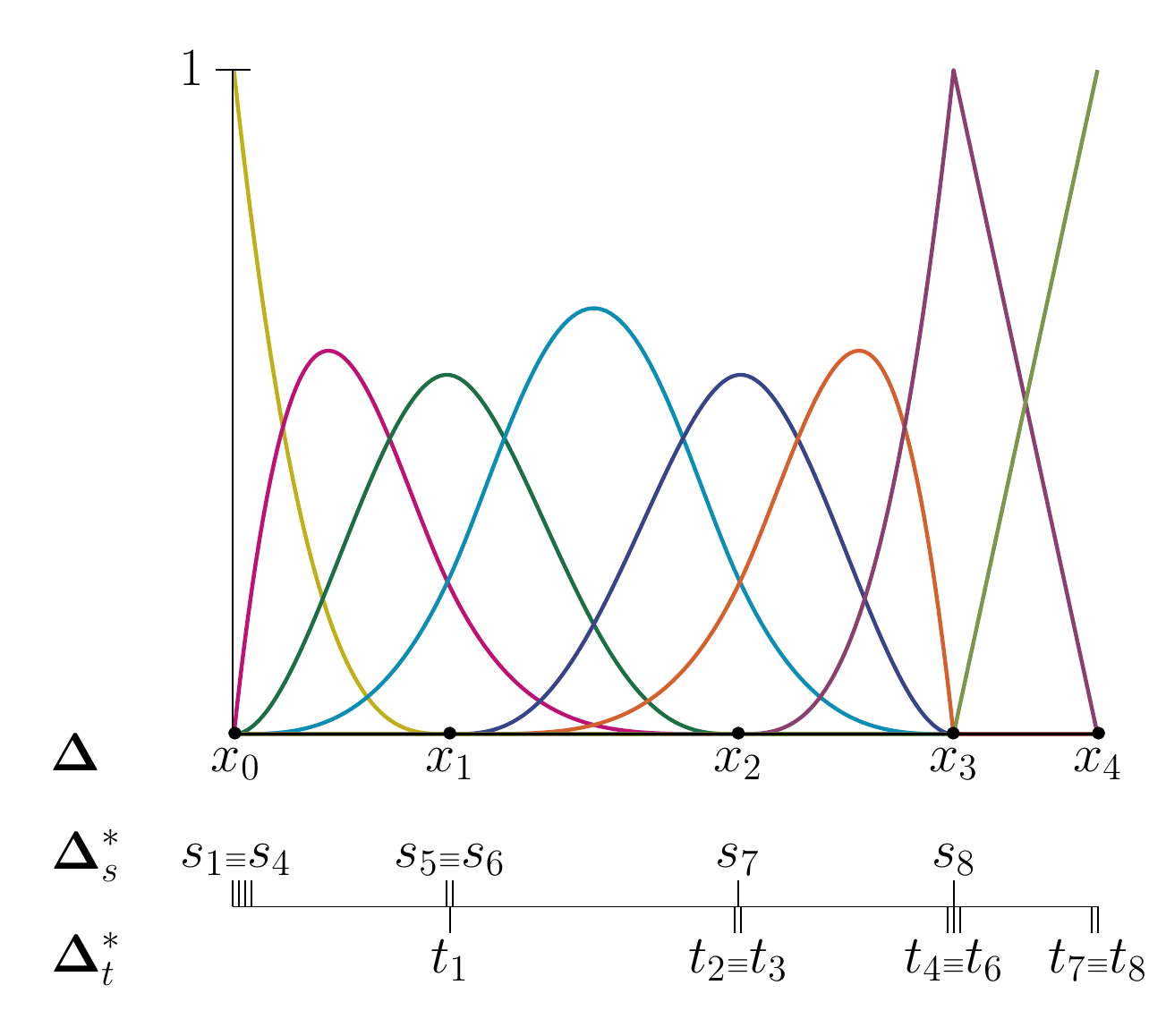}\label{fig:gi2_basis}}
}
\subfigure[$\alpha=4$, $\beta=1$]{
{\includegraphics[width=0.3\textwidth]{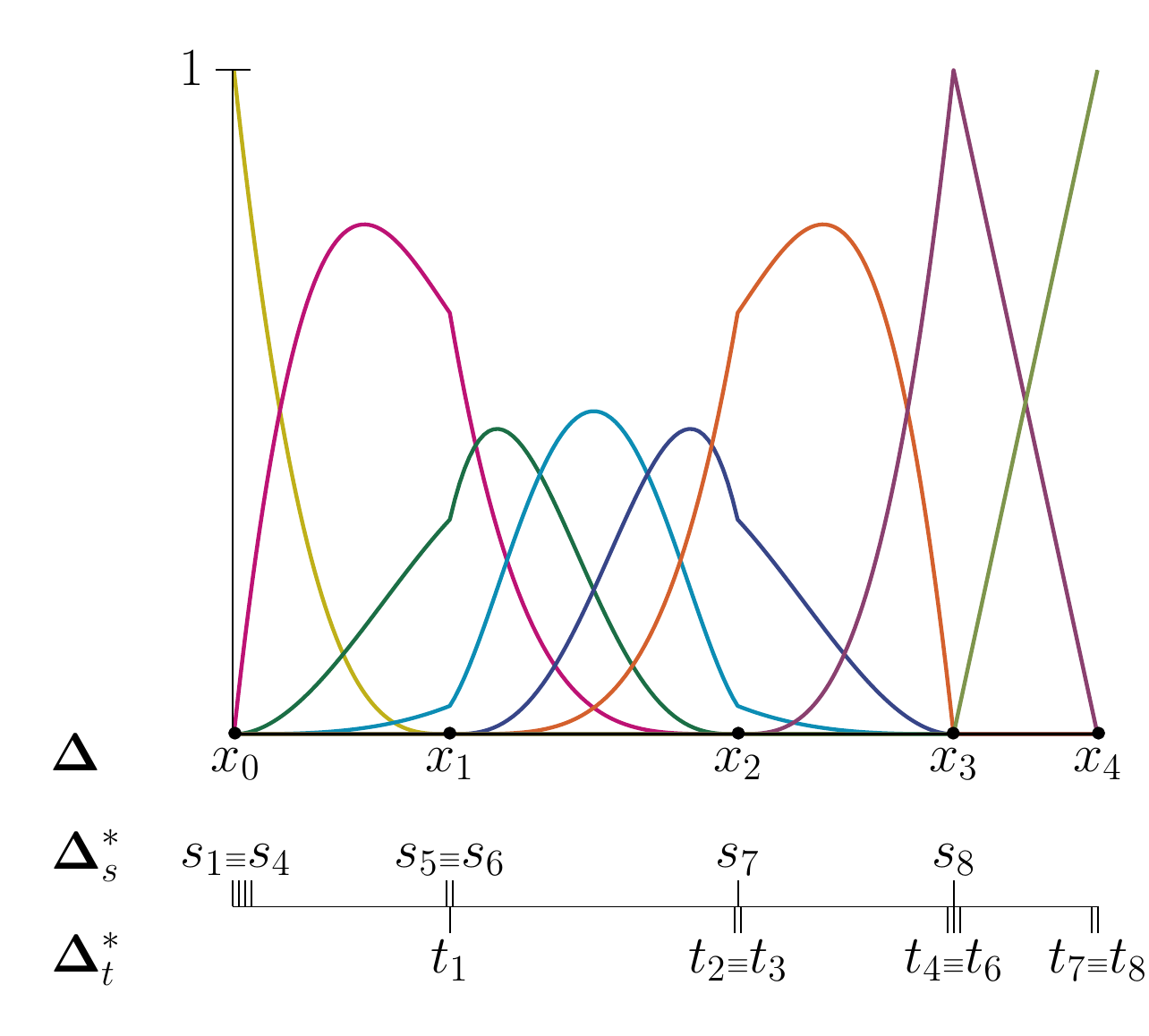}\label{fig:gl2_basis}}
}
\caption{Basis functions corresponding to the curves in the bottom row of Fig.\ \ref{fig:geom2}.}
\label{fig:geom2_bases}
\end{figure}

\section{Conclusion}
\label{sec:conclusion}
In this paper we have presented an integral relation for the construction of a multi-degree B-spline basis
which yields a wider family of splines compared to previously proposed similar approaches.
As an alternative to the integral recurrence relations, we have proposed a more convenient way to compute
the B-spline basis which relies on the use of transition functions.
Using the transition functions both knot insertion and (local) degree elevation methods can be formulated for MD-splines.
We have applied these tools to illustrate the efficiency of the proposed MD-splines in geometric modeling.
Finally, we have given some hints about
generalizing the proposed construction to the wider context of geometrically continuous splines and we plan to investigate this thoroughly in a future work.
Another interesting subject matter for future study is how to use MD-splines in surface modeling, with particular reference to T-splines and their application in isogeometric analysis.


\section*{Acknowledgements}
The authors gratefully acknowledge support from the Italian GNCS-INdAM.




\ifelsevier
\bibliographystyle{elsarticle-harv}
\bibliography{BCM_MD_splines} 
\fi



\ifspringer
\bibliographystyle{spmpsci} 
\bibliography{biblio} 
\fi

\end{document}